\newcommand{\C}{{\mathbb{C}}}
\newcommand{\cD}{{\mathcal{D}}}
\newcommand{\F}{{\mathcal{F}}}
\newcommand{\R}{{\mathbb{R}}}
\newcommand{\Z}{{\mathbb{Z}}}
\newcommand{\uM}{\underline{M}}
\newcommand{\uF}{\underline{\F}}
\newcommand{\uC}{\underline{\mathcal{C}}}
\newcommand{\uD}{\underline{\mathcal{D}}}
\newcommand{\uphi}{\underline{\varphi}}
\newcommand{\CS}{{\rm CS}}
\newcommand{\svee}{{\scriptscriptstyle \vee}}
\newcommand{\mf}[1]{{\mathfrak{#1}}}
\newcommand{\mr}[1]{{\mathrm{#1}}}
\newcommand{\mc}[1]{{\mathcal{#1}}}
\newcommand{\mb}[1]{{\mathbb{#1}}}
\newcommand{\wt}[1]{{\widetilde{#1}}}
\newcommand{\wh}[1]{{\widehat{#1}}}
\newcommand{\loar}[2]{{#1}\looparrowright_{\wF}{#2}}
\newcommand{\uco}[1]{{
{\underset{\scriptscriptstyle{#1}}\looparrowright}}}
\let\loar\ucon
\newcommand{\iso}{{\overset{\sim}{\longrightarrow}}}
\newcommand{\proan}{{\underleftarrow{\mathrm{An}}}}
\newcommand{\protop}{{\underleftarrow{\mathrm{Top}}}}
\newcommand{\inte}[1]{\overset{\circ}{{#1}}}
\let\CAL=\mathcal%
\def\mathcal#1{{\CAL#1}}%
\newtheorem{teo}{Theorem}[section]
\newtheorem{thmm}{Theorem}
\newtheorem{lema}[teo]{Lemma}
\newtheorem{prop}[teo]{Proposition}
\newtheorem{defin}[teo]{Definition}
\newtheorem{cor}[teo]{Corollary}
\newtheorem{corm}{Corollary}
\newtheorem{obs2}[teo]{Remark}
\newtheorem{recap2}[teo]{Recapitulation}
\newtheorem{ex2}[teo]{Exemple}
\newenvironment{obs}{\begin{obs2}\rm}{\end{obs2}}
\newenvironment{ex}{\begin{ex2}\rm}{\end{ex2}}
\newenvironment{dem}{\begin{proof}[Proof]}{\end{proof}}
\newenvironment{dem2}[1]{\begin{proof}[Proof #1]}{\end{proof}}
\def\bibartp#1#2#3#4#5#6#7#8
\def\bibart#1#2#3#4#5#6
\def\bibliv#1#2#3#4#5
\def\bibaart#1#2#3#4
\title[Topology of singular holomorphic foliations]{Topology of singular holomorphic foliations along a compact divisor}
\date{\today}
\author{David Mar\'{\i}n and Jean-Fran\c{c}ois Mattei}
\thanks{This work was partially supported by grants MTM2008-02294 and MTM2011-26674-C02-01 of  Ministerio de  Econom\'{\i}a y Competitividad 
of Spain / FEDER}
\address{Departament de Matem\`{a}tiques \\ Universitat Aut\`{o}noma de Barcelona \\ E-08193 Bellaterra (Barcelona)\\ Spain} \email{davidmp@mat.uab.es}
\address{Institut de Math\'{e}matiques de Toulouse\\ Universit\'{e} Paul Sabatier\\ 118, Route de Narbonne\\ F-31062 Toulouse Cedex 9, France} \email{jean-francois.mattei@math.univ-toulouse.fr}
\begin{document}
\begin{abstract}
We consider a singular holomorphic foliation $\uF$ defined near a compact curve $\uC$ of a complex surface. Under some hypothesis on $(\uF,\uC)$ we prove that there exists a system of tubular neighborhoods $U$ of a curve $\underline{\mc D}$ containing $\uC$ such that every leaf $L$ of $\uF_{|(U\setminus \underline{\mc D})}$ is incompressible in $U\setminus \underline{\mc D}$.
We also construct  a representation of the fundamental group of the complementary of $\underline{\mc D}$ into a suitable automorphism group, which allows to state the topological classification of
the germ of $(\uF,\uD)$, under the additional but generic dynamical hypothesis of transverse rigidity. In particular, we show that every topological conjugation between such germs of holomorphic foliations can be deformed to extend to the exceptional divisor of their reductions of singularities.
\end{abstract}
\maketitle
%
%

\section{Introduction and main results}
We consider a smooth complex surface $\uM$ endowed with a  holomorphic foliation $\uF$ having isolated singularities and a compact connected holomorphic curve $\uC$.
 To treat in a unified way the local setting we will also allow  the case that $\uC$ reduces to an isolated singular point.
There are two main results in this paper under some hypothesis concerning the pair $(\uF,\uC)$, which we will precise in the sequel:
\begin{enumerate}[(A)]
\item The existence of a fundamental system of neighborhoods of $\uC$ where the leaves of $\uF$ are incompressible in the complementary of an ``adapted'' curve $\underline{\mc D}$ containing $\uC$.
Recall that a subset $A$ of a topological space $V$ is \textbf{incompressible} in $V$ if
the natural inclusion $A\subset V$ induces a monomorphism at the fundamental group level for every choice of the base point in $A$.
\item The construction of a representation of the fundamental group of the complementary of $\underline{\mc D}$ into a suitable automorphism group,
which allows us to state the topological classification of the germ  $(\uF,\uD)$
of $\uF$ along $\uD$. When the curve $\uC$ is smooth and invariant by $\uF$, this object is directly equivalent to the classical holonomy representation of $\pi_{1}(\uC)$ into the automorphisms of a transverse section.
\end{enumerate}

A particular situation of this context occurs when the pair $(\uM,\uC)$ is a resolution of a surface singularity $(S,O)$, see Example~\ref{sup}.
In the general setting it is well known that there exists a composition $E:M\to\uM$  of blow-ups 
such that the curve $\mc C:=E^{-1}(\uC)$ and the foliation $\F:=E^{*}\uF$ satisfy the following properties:
\begin{itemize}
\item $\mc C$ has normal crossings and all its irreducible components $\mc C_i$, $i\in \mf I$ are smooth,
\item two different
irreducible
components of $\mc C$ are disjoint or intersect in a unique point,
\item $\F$ is reduced in the sense of \cite{Corral}, i.e. each singular point of $\F$ has Camacho-Sad index in $\C\setminus\mb Q_{>0}$
and each  component $\mc C_i$ is either $\F$-invariant or $\mc C_i\cap \mr{Sing}(\F)=\emptyset$ and $\F$ is totally transverse to $\mc C_i$.
\end{itemize}
All the notions that we introduce in the sequel are germified along $\cD$ or $\uD$.
By definition the \textbf{isolated separatrix set} of $\F$ is the set $\mc S$ constituted by invariant curves by $\F$, which  are not contained in $\mc C$ and which intersect some $\F$-invariant irreducible component of $\mc C$. The image of the components of $\mc S$ by $E$ are called the \textbf{isolated separatrices} of $\uF$.

Let $\mc G_{\mc C}$ be the \textbf{dual graph} associated to $(M,\mc C)$ having one vertex $\mathbf{s}_i$ for each irreducible component $\mc C_i$ of $\mc C$ and one edge when two irreducible components of $\mc C$ intersect. We also introduce a double weighting $(g_i,\nu_i)$ in each vertex $\mathbf{s}_i$, by giving the genus $g_i=g(\mc C_i)$ and minus the self-intersection $\nu_i=-\mc C_i\cdot \mc C_i$.
It is well known that we can  topologically  recover a tubular neighborhood of $\mc C$ in $M$ by a plumbing procedure from the data given by the dual graph with weights $\mc G_{\mc C}$, see
Section~\ref{plumbing}.

In the sequel we will need to consider a (not necessarily compact) holomorphic curve $\mc D\subset M$ containing $\mc C$.
We define the \textbf{valence} with respect to $\mc D$ of an
irreducible component $D$ of $\mc D$ as the number $v(D)$ of singular points of $\mc D$ lying on $D$.
A \textbf{dead branch} of $\mc D$ is a connected maximal union of irreducible components of $\mc C$ of genus $0$ with valence $2$ with respect to $\mc D$ except for one of them whose valence must be $1$.

Making an additional iterative blowing down process if necessary, without loss of generality we can also assume that
\begin{itemize}
\item
there is no exceptional (i.e. having self-intersection $-1$) $\F$-invariant rational component of $\mc C$
of valence $\le 2$ with respect to $\mc D$.
\end{itemize}

Notice that an irreducible component $D$ (not necessarily compact) of $\mc D$ may be transverse to $\F$. In that case we will say that $D$ is a \textbf{dicritical} component of $\F$.

In order to state our first main result we must introduce some new notions. Denote by $\mc G_{\mc D}$ the dual graph associated to the divisor $\mc D$.
\begin{itemize}
  \item A \textbf{breaking element} of $\mc G_{\mc D}$ is every vertex corresponding to a dicritical component of $\F$ and every edge corresponding to a linearizable singularity of $\F$.
  \item The \textbf{break graph} associated to $(\F,\mc D)$ is the graph
obtained from $\mc G_{\mc D}$ by removing all the breaking elements and the edges whose one of its endpoints is a breaking vertex.
  \item An \textbf{initial component} of $(\F,\mc D)$ is a $\F$-invariant irreducible component $C$ of $\mc C$ such that one of the following situations holds:
\begin{enumerate}[(a)]
\item $g(C)=0$, there is a non-linearizable singular point $p_{0}$ of $\F$ on $C$ and every point $p\in \mr{Sing}(\mc D)\cap C$, $p\neq p_{0}$,   belongs to some dead branch;
 \item $g(C)>0$ and the holonomy of the boundary of every embedded conformal disk in $C$ containing $\mr{Sing}(\mc D)\cap C$ is not linearizable.
\end{enumerate}
\end{itemize}

We introduce two hypothesis on the pair $(\F,\mc C)$. The first one is of local nature and it concerns only the singularities of $\F$. The second one is global and it also  concerns the topology of $\mc C$.
\begin{itemize}
\item[(L)] \textit{The reduced foliation $\F$  has no saddle-nodes and
each singularity $s\in\mr{Sing}(\F)$ having Camacho-Sad index $\lambda_{s}\notin\mb Q$ is linearizable.}
\item[(G)] \textit{Each connected component of the break graph associated to $(\F,\mc C)$ is a tree, which contains at most one vertex corresponding to an initial component $C$ of $\mc C$ of genus $g(C)>0$.}
\end{itemize}

Notice that Condition (L) is generic in the following sense: let $\mc B\subset\C$ be the set of Brjuno numbers, namely those complex numbers $\lambda$ verifying that the germ of every singular foliation defined by a $1$-form of type $(u+\cdots)dv-(\lambda v+\cdots)du$ is always linearizable. It is well known that $\C\setminus\R\subset\mc B$ and that $\R^{-}\setminus\mc B$ has zero measure.

If $\lambda\in \R_{>0}$ then the singularity is a \textbf{node}. Because the reducedness of $\F$ we  have that $\lambda$ is necessarily irrational. If such a singular point $s$ belongs to the strict transform of a (necessarily isolated) separatrix $Z$ of $\uF$ we say that $Z$ is a \textbf{nodal separatrix} of $\uF$ and $s$ a \textbf{nodal singularity}.
The topological specificity of such singularity is the existence, in any small neighborhood of $s$, of a saturated closed set whose complement is an open disconnected neighborhood of the two punctured local separatrices of the node. We call \textbf{nodal separator} such a saturated closed set. A nodal separator of $\uM$ is the image by $E$ of a nodal separator in $M$.

If $D$ is a dicritical component of $\F$ then
for each singular point $s\in \mr{Sing}(\mc D)\cap D$ we consider a conformal closed disk $D_{s}\subset D$ containing $s$ in its interior such that their pairwise intersections are empty. A \textbf{dicritical separator} associated to $D$ is a tubular neighborhood of the closure of $D\setminus\bigcup\limits_{s\in\mr{Sing}(\mc D)\cap D}D_{s}$ which is the total space of a holomorphically trivial disk fibration whose fibers are contained in the leaves of $\F$. A dicritical separator of $\uM$ is the image by $E$ of a dicritical separator of $M$.\\

On the other hand, Condition (G) is not generic and we do not know if the incompressibility of the leaves of $\F$ in the complementary of some $\mc D\supset\mc C$ holds when it is not fulfilled.
Even in the case that Condition (G) holds, the first choice $\mc D=\mc C$ does not work for instance by considering the case that $\mc C$ is the exceptional divisor of the reduction of singularities of a germ of foliation $\uF$ in $\uM=(\C^{2},0)$ because $M\setminus\mc C\cong\C^{2}\setminus\{0\}$ is simply connected.
The next natural choice consists to add to $\mc C$ the isolated separatrices $\mc S$ of $\F$ but this is not enough as the following example shows.

\begin{ex}\label{cusp} Consider the dicritical foliation $\uF$ in $(\mb C^{2},0)$
defined by the rational first integral $\underline{f}(x,y)=\frac{y^{2}-x^{3}}{x^{2}}$ whose isolated separatrix set is the cusp $S=\{y^{2}-x^{3}=0\}$. Let $\uM$ be a Milnor ball for $S$. The composition $E:M\to \uM$ of blow-ups considered in the introduction for this case corresponds to the minimal desingularization of $S$. The exceptional divisor $\mc C=E^{-1}(0)$ has three irreducible components $D_{1},D_{2},D_{3}$ which we numerate according to the order that they appear in the blowing up process. Thus $D_{1}^{2}=-3$, $D_{2}^{2}=-2$ and $D_{3}^{2}=-1$.
The strict transform $\mc S$ of $S$ only meets $D_{3}$. It turns out  that  $D_1$ and $D_2$ are two dead branches composed by a single irreducible component  attached to $D_3$.
Moreover $D_{1}$ a dicritical component. In fact, it is totally transverse to $\F=E^{*}\uF$. Thus, $\mc C\cup\mc S$ do not satisfy Condition (c) in Definition~\ref{adapted}.
On the other hand, it is well-known that if $a$, $b$ and $c$ are meridian loops around $D_{1}$, $D_{2}$ and $D_{3}$ respectively, with common origin, then  $\pi_{1}(\uM\setminus S, \cdot )=\langle a,b,c\,|\, a^{3}=b^{2}=c\rangle$.
We shall see that there exist non-incompressible leaves of $\F$ inside $M\setminus(\mc C\cup\mc S)$. Indeed, looking at the situation after the first blowing-up, we immediately see that there are two types of leaves of $\F$: those that are near to the isolated separatrix set $\mc S$, which are disks minus two points and the others which are diffeomorphic to $\mb D^{*}$. If $L$ is a leaf of the first kind then $\pi_{1}(L)=\langle \alpha_{+},\alpha_{-}|-\rangle$ is a free group of rank $2$.  We claim that we can choose the generators so that the morphism $\imath:\pi_{1}(L)\to\pi_{1}(M\setminus (\mc C\cup\mc S))$ induced by the inclusion is given by $\imath(\alpha_{+})=a$ and $\imath(\alpha_{-})=b^{-1}ab$. It follows that
$\imath(\alpha_{+}^{3}\alpha_{-}^{-3})=a^{3}b^{-1}a^{-3}b=1$ and consequently $L$ is not incompressible in $M\setminus(\mc C\cup\mc S)$. In order to prove the claim we consider the coordinate system $(t,x)$ on $M\setminus (\mc C\cup\mc S)$ induced by the first blowing-up, defined by $E(t,x)=(x,tx)=(x,y)$. We have $f(x,t):=(E^{*}\underline{f})(x,t)=t^{2}-x$ and the restriction of $f$ to $U_{\varepsilon}:=\{|f|= \varepsilon\}$, $0<\varepsilon\ll 1$,  is a locally trivial $C^\infty$-fibration over the  standard circle $\mb S^1_\varepsilon$ of radius $\varepsilon$, whose fiber over $\varepsilon$ is $F_{\varepsilon}:=\C\setminus\{\pm\sqrt{\varepsilon}\}$. Since the pull-back of $U_{\varepsilon}\stackrel{f}{\to}\mb S^{1}_{\varepsilon}$ by the exponential map $\exp:[0,2\pi]\to \mb S^{1}_{\varepsilon}$, $\exp(\theta)=\varepsilon e^{i\theta}$, is trivial, we obtain a trivializing map $\tau:F_{\varepsilon}\times[0,2\pi]\to U_{\varepsilon}$ sending $(z,\theta)$ into $(t,x)=(ze^{i\frac{\theta}{2}},(z^{2}-\varepsilon)e^{i\theta})$. We consider the path $\beta:s\mapsto(z,\theta)=(0,s+\pi)$, $s\in[0,2\pi]$, projecting by $\tau$ into the loop $(t,x)=(0,-\varepsilon e^{is})$ which is a meridian of $D_{2}$. Hence, we can take the generator $b\in\pi_{1}(M\setminus(\mc C\cup\mc S))$ as the homotopy class of $\beta$. Let $z(s)$ be a simple loop in $F_{\varepsilon}$ based on $z=0$ having index $+1$ around $+\sqrt{\varepsilon}$ and index $0$ around $-\sqrt{\varepsilon}$.
We define $\alpha_{-}(s)=(z(s),0)$ and $\alpha_{+}(s)=(z(s),2\pi)$. It is clear that $\alpha_{+}$ is homotopic to $\beta\alpha_{-}\beta^{-1}$ in $F_{\varepsilon}\times[0,2\pi]$. Hence its respective projections by $\tau$ are also homotopic loops in $U_{\varepsilon}$.
Notice that $\tau(\alpha_{-}(s))=(z(s),z^{2}(s)-\varepsilon)$ and $\tau(\alpha_{+}(s))=(-z(s),z^{2}(s)-\varepsilon)$ are meridians around $D_{1}$ so that we can choose the generator $a\in\pi_{1}(M\setminus(\mc C\cup\mc S))\cong\pi_{1}(U_{\varepsilon})$ as the homotopy class of $\tau(\alpha_{+})$.
The fundamental group of the leaf $L$ passing through the point $(t,x)=(0,-\varepsilon)$ is
$\pi_{1}(L)=\langle\alpha_{+},\alpha_{-}|\,-\rangle$ and the images of its generators by $\imath$ are given by $\imath(\alpha_{+})=a$ and $\imath(\alpha_{-})=b^{-1}ab$.

However, if we define $\mc D:=\mc S\cup T\cup \mc C$, where $T$ is the strict transform of  $\{x=0\}$, we can directly see that all the leaves are incompressible in $M\setminus \mc D$. Indeed $T$ meets $D_{1}$ transversely, then
$$\pi _1(M\setminus \mc D,\cdot)=\langle a,b,c\,|\, b^{2}=c,\ [c,a]=1\rangle=\langle a,b\,|\,[a,b^{2}]=1\rangle$$ and the elements $a$ and $b^{-1}ab$ are without relation in this group.
\qed
\end{ex}

Thus, we must make some additional ``holes'' in $M\setminus (\mc C\cup\mc S)$ in order to obtain a bigger fundamental group which could contain the fundamental group of each leaf. This will be done by considering a new divisor $\mc D\supset\mc C\cup\mc S$ obtained by adding some small curves transverse to $\mc C$ satisfying the following technical properties.

\begin{defin}\label{adapted}
We say that a (generally not compact) divisor $\mc D\subset M$ is \textbf{adapted} to $(\F,\mc C)$ if the following conditions hold:
\begin{enumerate}[(a)]
\item the adherence of $\mc D\setminus \mc C$ is a finite union of
conformal disks transverse to $\mc C$ at regular points of $\mc C$ and $\mc D\setminus \mc C$ does not contain any singular point of $\F$;
\item the isolated separatrix set  $\mc S$ is contained in $\mc D$;
\item\label{condc} for every irreducible components $C$ and $D$ of $\mc C$ we have $C\cap D=\emptyset$ provided that $C$ is contained in a dead branch and $D$ is dicritical;
\item\label{condd} if $\mc D=\mc C$,
then it contains at least two irreducible components which do not belong to any dead branch;
\item\label{conde} each connected component of the break graph associated to $(\mc F,\mc D)$ contains at most one vertex corresponding to an initial component of $(\F,\mc D)$
\end{enumerate}
\end{defin}

Adding to $\mc C\cup\mc S$ one non-isolated separatrix over each dicritical component of $\mc C$ having valence $1$ and one transverse curve over certain initial components of genus zero, we obtain a divisor $\mc D$ adapted to $(\F,\mc C)$ provided it fulfills Condition (G):

\begin{prop} If $(\F,\mc C)$ satisfies Condition (G) then there always exists
 a divisor $\mc D$ adapted  to $(\F,\mc C)$.
\end{prop}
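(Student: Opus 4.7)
The plan is to build $\mc D$ by starting with $\mc D_0 := \mc C \cup \mc S$ and then inserting finitely many conformal disks transverse to $\mc C$ at regular points in order to enforce conditions (c), (d), (e) of Definition~\ref{adapted}. Condition (a) is automatic for $\mc D_0$ since every component of $\mc S$ is a smooth separatrix meeting an $\F$-invariant component of $\mc C$ transversely at a regular point of $\mc C$; it will be preserved at each step provided every inserted disk is chosen transverse to $\mc C$ at a regular point and is disjoint from $\mr{Sing}(\F)$ outside $\mc C$. Condition (b) holds by construction of $\mc D_0$.

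To enforce (c), I scan the dead branches of $\mc D_0$. For every dead branch which contains or is adjacent to a dicritical component $D$ of $\mc C$, I add one transverse disk chosen to raise the valence of the appropriate chain vertex and thereby destroy the chain structure required by a dead branch. More precisely, if $D$ itself lies in the dead branch, I insert a non-isolated separatrix on $D$ (a local leaf of $\F$ through a regular point of $D$, which is transverse to $D$ since $D$ is dicritical); if $D$ is only adjacent to the dead branch through its non-free endpoint $C_0$, I insert a disk transverse to $C_0$ at a regular point of $\F$ on $C_0$. The key property is \emph{monotonicity}: adding a disk only raises valences, so the operation can only push components out of dead branches, never into them. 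After finitely many insertions this yields a divisor $\mc D_1$ satisfying (c).

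To enforce (e), I use hypothesis (G): each connected component of the break graph of $(\F,\mc C)$, and hence of $(\F,\mc D_1)$, is a tree containing at most one initial vertex of genus $>0$. For any connected component of the break graph which still contains more than one initial vertex, I keep one distinguished vertex (the genus $>0$ one if present) and, for each of the remaining initial vertices $C$ (necessarily of genus $0$ by (G)), I attach a transverse disk on $C$ at a regular point of $\F$ lying outside every dead branch. This creates a new singular point of $\mc D$ on $C$ which does not belong to any dead branch, violating criterion (a) of the definition of an initial component, so that $C$ ceases to be initial. By monotonicity the same step preserves (c). Finally, if at the end of these two steps we still have $\mc D = \mc C$ (so $\mc S = \emptyset$ and no addition was triggered), condition (d) is either automatic or restored by attaching one or two further disks to non-dicritical, non-dead-branch components of $\mc C$.

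The main difficulty is to ensure that the successive modifications respect all five conditions simultaneously: adding disks in Step~2 may alter which vertices are initial, and adding disks in Step~3 may alter which chains are dead branches. Monotonicity of the valence-raising operation, combined with the tree structure of the break graph guaranteed by hypothesis (G), are the tools I would rely on to prove that the construction is well-defined, that it terminates after finitely many insertions, and that all of (a)--(e) hold simultaneously for the resulting divisor $\mc D$.
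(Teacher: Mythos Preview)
Your approach is essentially the same as the paper's: the paper's entire argument is the sentence preceding the proposition, which says to start from $\mc C\cup\mc S$, add one non-isolated separatrix on each dicritical component of valence~$1$, and add a transverse curve on certain genus-zero initial components. Your Steps~2 and~3 are exactly this, and your Step~4 for condition~(d) is an obvious addendum.

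One point where your write-up is actually more careful than the paper's hint: for condition~(c) the paper only mentions valence-$1$ dicritical components, which handles the situation where a dicritical component would itself be the free end of a dead branch, but does not obviously treat a dead branch whose attaching component is dicritical of higher valence. Your case split (add a separatrix on $D$ if $D$ lies in the dead branch, add a disk on the dead-branch end $C_0$ if $D$ is only the attaching component) covers this. Note however that a single insertion may split a dead branch into a shorter dead branch still attached to a dicritical component, so you should state explicitly that Step~2 is iterated; your closing paragraph invokes monotonicity and termination, which is the right justification, but the body of Step~2 reads as a one-shot operation. With that clarification your argument is complete and aligned with the paper's.
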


In the case that $\mc C$ is the exceptional divisor of the reduction of a germ $\uF$ at $(\C^{2},0)$, in the statement of Corollary~\ref{B} we will precise
the ``minimal''  divisor adapted to $(\F,\mc C)$.

For $A\subset B\subset\uM$ we denote by $\mr{Sat}_{\uF}(A, B)$ the union of all the leaves of	$\F_{|B}$	 passing	through	some	 point	 of $A$.
We fix a plumbing tubular neighborhood $W$ of $\mc C$ in $M$ (see Section~\ref{plumbing}).
The first main result of this paper is the following.

\begin{thmm}\label{main} Let $\mc D$ be a divisor adapted to $(\F,\mc C)$. Assume that $(\F,\mc D)$ satisfies the assumptions (L) and (G) stated below.
Then there exists a fundamental system $(U_{n})_{n\in\mb N}$, $U_{n+1}\subset U_{n}$, of open neighborhoods of $\underline{\mc D}:=E(\mc D)$ in $\uM$ and there exists a smooth holomorphic  curve $\Upsilon\subset M$ transverse to $\F$ having a finite number of connected components, such that for each $n\in\mb N$ the open sets $U_{n}^{*}:=U_{n}\setminus\underline{\mc D}$ and $V^\ast:=E(W)\setminus\underline{\mc D}$ satisfy the following properties:
\begin{enumerate}[(i)]
\item the inclusions $U_{n+1}^{*}\subset U_{n}^{*} \subset V^{*}$ induce  isomorphisms of their funda\-mental groups,
\item every leaf of $\uF_{|U_{n}^{*}}$ is incompressible in $U_{n}^{*}$,
\item each connected component of $Y_{n}^{*}:=E(\Upsilon)\cap U_{n}^{*}$ is a punctured topological disk which is
    incompressible in $U_{n}^{*}$ and
$\mr{Sat}_{\uF}(Y_{n}^{*},U_{n})$ is the complementary in $U_{n}^{*}$ of a finite union of nodal and dicritical separators,
\item\label{1concourbe} there does not exist any path lying on a leaf of $\F_{|U_{n}^{*}}$ with distinct endpoints on $Y_{n}^{*}$ which is homotopic in $U_{n}^{*}$ to a path lying on $Y_{n}^{*}$,
\item the leaf space of the foliation induced by $\F$ in the universal covering space of $U_{n}^{*}$ is a not necessarily Hausdorff one-dimensional complex manifold.
\end{enumerate}
\end{thmm}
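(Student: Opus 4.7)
The plan is to build $(U_n)$ and $\Upsilon$ simultaneously from the plumbing decomposition $W=\bigcup_i W_i$, where each $W_i$ is a standard tubular neighborhood of $\mc C_i$ and $W_i$ meets $W_j$ in a bidisk at each intersection point of $\mc C$. First I would shrink the plumbing radii by a sequence $\rho_n\to 0$ to obtain nested models $W^{(n)}\subset W$, and then set $U_n:=E(W^{(n)}\cup N_n)$, where $N_n$ is a small tubular neighborhood in $M$ of $\overline{\mc D\setminus\mc C}$. Each $U_n^*$ deformation retracts onto a common boundary skeleton of $V^*$, which proves (i). For the transverse curve I would choose, for each vertex $\mathbf{s}_i$ of the break graph that corresponds to a non-dicritical invariant component, a finite collection of fibers of the plumbing bundle $W_i\to\mc C_i$ avoiding $\mr{Sing}(\F)\cap\mc D$; hypothesis (L) rules out saddle-nodes, so these fibers may be taken transverse to $\F$. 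The components of $Y_n^*=E(\Upsilon)\cap U_n^*$ are then punctured topological disks, incompressible in $U_n^*$ as meridian-type subsets of the plumbing, which yields the geometric part of (iii).

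The core of the argument is the incompressibility (ii), which I would prove via an iterated Van Kampen argument expressing $\pi_1(U_n^*)$ as the fundamental group of a graph of groups $\mathcal{G}$ supported on $\mc G_{\mc D}$. To each invariant vertex $\mathbf{s}_i$ one attaches the fundamental group of $W_i\setminus\mc D$, a Seifert-type circle-bundle group; to each edge corresponding to a reduced non-linearizable singularity, the free abelian group of rank two generated by the two local meridians; dicritical vertices contribute free groups and linearizable edges carry a product structure. A leaf $L$ of $\F_{|U_n^*}$ traverses a subgraph of $\mathcal{G}$, and the image of $\pi_1(L)$ in $\pi_1(U_n^*)$ factors through holonomy representations of local leaf pieces. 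Condition (G) together with (\ref{conde}) of Definition~\ref{adapted} force every $\F$-invariant non-dicritical component of the break graph to carry a non-linearizable holonomy germ, and classical rigidity results for subgroups of $\mr{Diff}(\C,0)$ containing such a germ make the local holonomy representation faithful. Bass--Serre theory then assembles these into global injectivity of $\pi_1(L)\to\pi_1(U_n^*)$, giving (ii); the same faithfulness rules out any parallel leaf-transverse homotopy, proving (iv).

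For the saturation statement in (iii), the points whose leaves do not meet $\Upsilon$ lie in the preimages of nodes and of dicritical components: condition (L) provides nodal separators around nodes, while conditions (\ref{condc})--(\ref{conde}) of Definition~\ref{adapted} together with the absence of exceptional components of low valence produce well-defined dicritical separators, whose union is exactly $U_n^*\setminus\mr{Sat}_{\uF}(Y_n^*,U_n)$. Property (v) then follows from (ii) and (iv): the universal cover $\widetilde{U_n^*}$ carries a foliation with simply connected leaves, the lift of $\Upsilon$ is a global transversal meeting each leaf in a discrete set, and since the holonomy pseudogroup is holomorphic the leaf-quotient is a one-dimensional complex manifold with non-Hausdorff points concentrated at the lifts of the separators. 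The main obstacle throughout is the graph-of-groups injectivity step: the tree hypothesis in (G) is essential to prevent long cycles in $\mc G_{\mc D}$ from absorbing holonomy relations into unintended collapses, and the uniqueness of at most one positive-genus initial component per component of the break graph is what makes the rigidity argument globalize rather than work only locally.
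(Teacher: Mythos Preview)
Your proposal has a genuine conceptual gap in the incompressibility argument (ii), and a related problem in the construction of the $U_n$.

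First, the open sets. Taking $U_n$ as images of shrunken plumbing neighborhoods $W^{(n)}$ does not work: the interfaces $W_i\cap W_j$ and the boundaries $\partial W_i$ are \emph{not} transverse to $\F$ (except at dicritical components), so the pieces $L\cap W_i$ of a leaf are not well-defined manifolds with boundary and cannot be organized into a graph-of-spaces decomposition of $L$ compatible with your decomposition of $U_n^*$. The paper does not use the plumbing pieces directly; it replaces each fundamental block $A$ of $\mc D$ by a carefully constructed $\F$-adapted subset $\mc B_\eta(A)\subset\mc T_\eta^*(A)$ whose boundary components are of \emph{suspension type}, hence automatically transverse to $\F$, and whose size and ``roughness'' are controlled so that adjacent blocks can be glued inductively along the tree $\check{\mc G}$ (Theorem~2.11). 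The $U_n$ are then the images of these foliated boundary assemblies $\mc B_{\eta_n}$, not of Milnor tubes.

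Second, and more seriously, your injectivity step confuses two different representations. The holonomy representation has domain $\pi_1(D)$ for an \emph{invariant} component $D\subset\mc D$ and target $\mr{Diff}(\C,0)$; its faithfulness (which in any case does not follow from the mere presence of a non-linearizable germ) says nothing about the map $\pi_1(L)\to\pi_1(U_n^*)$ for a leaf $L$. The paper never invokes faithfulness of holonomy. What replaces it is the notion of $1$-$\F$-connectedness (Definition~2.1): for each block one proves not only (B1) and (B4) but also that $\partial\mc B_\eta(A)\looparrowright_{\F}\mc B_\eta(A)$. This relative condition is exactly what is needed to propagate leafwise incompressibility across the interfaces in the Localization Theorem~2.3; without it a Bass--Serre style argument on $\pi_1(U_n^*)$ alone cannot control how a loop in $L$ that crosses several blocks might become null-homotopic. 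Condition~(G) and the initial-component machinery enter not through rigidity of diffeomorphism groups but through the inductive construction of the suspension boundaries (Parts (I)--(III) of Theorem~2.11), which requires a starting block and a tree order on each component of the break graph. Properties (iii)--(v) then follow formally from the block properties and the Localization Theorem, essentially as you outlined.
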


\begin{obs}\label{minimal}
It will follow from the proof that a curve $\Upsilon$ satisfying the properties  of Theorem~\ref{main} 
can be constructed in the following way. We choose a vertex in each connected component of the break graph  of $(\F,\mc D)$, a regular point in the corresponding irreducible component of $\mc D$ and we take transversal disks through these points as branches of $\Upsilon$. Hence, the irreducible components of this curve are in one-to-one correspondence with the connected components of the break graph.
\end{obs}

The following corollary of Theorem~\ref{main} completes the main result of \cite{MM}.

\begin{corm}\label{B}
Let $\F$ be a germ of singular holomorphic foliation in $(\C^{2},0)$ which is a generalized curve such that all its singularities after reduction whose Camacho-Sad index is not rational are linearizable.
Then, there exists an open ball $\mb B$ centered at $0$, an analytic curve $\mc Z$ closed in $\mb B$ containing all the isolated separatrices of $\F$, a fundamental system $(U_{n})_{n\in\mb N}$ of neighborhoods of $\mc Z$ in $\mb B$ and a curve $\Upsilon\subset \mb B$, transverse to $\F$ outside the origin, such that the open sets $U_n^\ast:=U_n\setminus \mc Z$ and $V^\ast:=\mb B\setminus \mc Z$ satisfy  Properties (i)-(v)  of Theorem~\ref{main}.
Moreover, if $\F$ is not dicritical then we can take $\mc Z$ as the set of all the separatrices of $\F$. Otherwise, we can take $\mc Z$ as the set of all the isolated separatrices of $\F$ jointly with one non-isolated separatrix of $\F$
for each dicritical component containing a unique singular point of the exceptional divisor of the reduction of $\F$.
\end{corm}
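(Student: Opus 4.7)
The plan is to derive the corollary from Theorem~\ref{main} applied to the reduction of singularities of $\uF$. Let $\mb B$ be a small ball centered at $0\in\C^{2}$, set $\uM:=\mb B$ and $\uC:=\{0\}$, and let $E\colon M\to\uM$ be the reduction of singularities of $\uF$. Write $\F:=E^{*}\uF$ and $\mc C:=E^{-1}(0)$. The hypothesis that $\uF$ is a generalized curve implies that $\F$ has no saddle-nodes, and the assumption that the irrational-index singularities of $\F$ are linearizable then gives Condition~(L). Since $\mc C$ is the exceptional divisor of a composition of point blow-ups, every component has genus zero and $\mc G_{\mc C}$ is a tree; therefore the break graph is a forest and contains no initial component of genus $>0$, so Condition~(G) holds.

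By the proposition preceding Theorem~\ref{main}, there exists a divisor $\mc D$ adapted to $(\F,\mc C)$. Applying Theorem~\ref{main} to $(\F,\mc D)$ yields a fundamental system $(U_{n})$ of neighborhoods of $\uD:=E(\mc D)$ in $\uM$ and a curve $\Upsilon\subset M$ transverse to $\F$ satisfying properties (i)--(v). Setting $\mc Z:=\uD$, which is a closed analytic curve in $\mb B$ (after possibly shrinking $\mb B$) containing the isolated separatrix set $E(\mc S)$ by Condition~(b) of Definition~\ref{adapted}, and taking $E(\Upsilon)$ in the role of the transverse curve, gives the existence statement.

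To secure the explicit description of $\mc Z$ I would use the minimal adapted divisor coming out of the proof of that proposition. When $\uF$ is non-dicritical there are no dicritical components, so Conditions~(c) and~(d) of Definition~\ref{adapted} are vacuous (the latter because the Camacho--Sad theorem for generalized curves provides at least one isolated separatrix, whence $\mc C\cup\mc S\supsetneq\mc C$); thus $\mc D:=\mc C\cup\mc S$ is adapted and $\mc Z$ reduces to the union of \emph{all} separatrices of $\uF$. When $\uF$ is dicritical I would adjoin to $\mc C\cup\mc S$ one non-isolated separatrix through a regular point of each dicritical component of $\mc C$ containing a unique singular point of $\mc C$. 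Such a disk raises the $\mc D$-valence of that component to two, preventing it from being a terminal component of a dead branch, which is precisely what is needed for Condition~(c); together with the additional freedom to separate initial components by breaking elements, this also gives Condition~(e).

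The main obstacle is this last combinatorial check: one must verify that the valence-one dicritical components of $\mc C$ are the only ones through which a dead branch could reach the dicritical locus, and that adding one transverse disk per such component simultaneously preserves Conditions~(b), (c), (d) and~(e) of Definition~\ref{adapted}. The rationality of every component of $\mc C$ and the tree structure of $\mc G_{\mc C}$ reduce this to a finite inspection of the weighted dual graph; once it is completed, the corollary follows from Theorem~\ref{main} without any further argument.
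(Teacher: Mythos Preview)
Your overall strategy is correct and matches the paper's: verify Conditions~(L) and~(G), show that the explicit $\mc D$ coming from $\mc Z$ is adapted, and invoke Theorem~\ref{main}. The verification of~(L) and~(G) is fine. The substantive content of the corollary, however, is precisely the ``combinatorial check'' you defer, and your sketch of it has two genuine gaps.

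\textbf{Condition~(c).} Your argument---raising the $\mc D$-valence of a valence-one dicritical component to two so it is no longer terminal in a dead branch---only handles the case of a dicritical component that is itself the end of a dead branch. It says nothing about an $\F$-\emph{invariant} dead branch whose attaching component is dicritical (and, by induction on length, about a dicritical component of $\mc C$-valence $\ge 2$ sitting inside a dead branch). This is exactly the configuration of Example~\ref{cusp} viewed from the other side. The paper closes this gap by a geometric argument you do not mention: near an invariant dead branch attached to a dicritical component, all nearby leaves are compact curves, which is impossible in $\C^{2}$. Without this compactness observation your ``finite inspection of the weighted dual graph'' cannot succeed, because nothing combinatorial forbids such configurations in general---only the ambient space $(\C^{2},0)$ does.

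\textbf{Condition~(e).} Here your proposal actually diverges from the statement to be proved. You invoke ``the additional freedom to separate initial components by breaking elements'', i.e.\ the transverse disks that the general Proposition adds over certain initial components. But the specific $\mc Z$ in the corollary adds \emph{only} the non-isolated separatrices over valence-one dicritical components and nothing else; you must show that no further curves are needed. The paper does this by invoking a classical fact about desingularisation of curve germs in $(\C^{2},0)$: the total transform of a curve $X$ has at most one exceptional component adjacent to two or more dead branches. Taking $X$ to be the isolated separatrices together with two non-isolated separatrices per dicritical component, one checks (via \cite{CSL}) that the minimal desingularisation of $X$ coincides with the reduction map of $\F$; hence there is at most one initial component of $(\pi^{*}\F,\pi^{-1}(0))$, and Condition~(e) holds automatically for the explicit $\mc D$. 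Your outline does not supply this step, and the appeal to the general Proposition's construction would produce a larger $\mc Z$ than the one claimed.
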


\begin{obs} We point out some issues of each requirement of adapted divisor in Definition~\ref{adapted}:
\begin{enumerate}[(a)]
\item As we have already pointed out, roughly speaking, $W\setminus\mc D$ is obtained from  $W\setminus \mc C$ making some holes in order to that $\pi_{1}(W\setminus\mc D)$ be big enough to contain the fundamental group of each leaf.
\item As it was stated by R. Thom in the seventies, the separatrix set can be viewed as the organization center of the topology of the foliation around a singular point.
Hence it is natural to study the topological embedding of the leaves in the complement of it. In the dicritical case there is an infinite number of separatrices, so the first natural candidate curve to eliminate from the ambient space is the isolated separatrix set.
\item If $\mf m\subset \mc C$ is an invariant dead branch of $\mc D$ then on a neighborhood of $\mf m$, the leaves sufficiently close to $\mf m$ are disks or rational curves.  If moreover $\mf m$ attaches to a dicritical component $D$, then Condition (\ref{condc}) of Definition~\ref{adapted} is not satisfied. Near $D$ the leaves $L$ far away from $\mf m$ are punctured disks with infinite cyclic fundamental group $\mb Z c$, but we can deform the loop $c\subset L$ in the ambient space so that it is conjugated to a loop in a simply connected leaf close to $\mf m$. Hence, in this case we never have the incompressibility of all the leaves.
On the other hand, there exist  counter-examples to the incompressibility of the leaves if we admit some dicritical component contained in a dead branch, as we have already seen in Example~\ref{cusp}, where we have treated in detail the simplest non-trivial dicritical foliation in $(\mb C^{2},0)$ showing this behavior.
\item The radial vector field is a trivial counter-example for the incompressibility of its leaves if Condition (\ref{condd}) of Definition~\ref{adapted} is not satisfied. On the other hand, if $\mc C$ is an $\F$-invariant divisor whose dual graph is a tree and
$(\F,\mc D)$ do not satisfy Condition (\ref{condd}) of Definition~\ref{adapted}, then the intersection matrix $(\mc C_{i}\cdot \mc C_{j})$ can not be negative definite. Indeed, the main result of \cite{Camacho} implies the existence of separatrices (which are necessarily isolated because $\mc C$ is $\F$-invariant) in that case. Hence $\mc D\supset\mc S\cup\mc C\supsetneq\mc C$. Consequently, such divisors do not come from foliations on surface singularities. However, it would be interesting to study the topology of the leaves in this context. The simplest situation occurs when $\mc D=\mc C$ is a chain. Since the restriction of the leaves to a neighborhood of a component of valence $1$ and $2$ are disks and annuli respectively, we deduce that the global leaves in the chain situation are topologically spheres, hence simply connected.
\item Condition (e) of Definition~\ref{adapted} is of technical nature and it comes from the method of construction developed in \cite{MM} which is used in this work.
\end{enumerate}
\end{obs}

\begin{ex}\label{sup}
Let $(S,O)$ the surface singularity $$\{z^{2}=(x^{2}+y^{2})(x^{2}+y^{7})\}\subset(\C^{3},0)$$ considered in \cite{Camacho}. The desingularization $(\uM,\uC)$ of $(S,O)$ is described by a triangular  graph whose vertex represent rational curves having self-intersections $-2$, $-2$ and $-3$, cf. \cite{Laufer}.  After \cite{Wagreich}, the fundamental group $G$ of $S\setminus\{O\}\cong \uM\setminus\uC$ can be presented as
$$G=\langle a,b,c\,|\,cac^{-1}=a^{-3}b^{5},\ cbc^{-1}=a^{-5}b^{8},\ [a,b]=1\rangle$$
and it is solvable. By  the synthesis theorem of \cite{Lins-Neto} there exists  a singular holomorphic foliation $\F$ on $(S,O)$ such that after desingularization defines a singular foliation $\uF$ on $\uM$ whose singularities are reduced and correspond to the three intersection points of the precedent rational curves. By applying the index theorem of \cite{Camacho-Sad} we deduce that the Camacho-Sad index of these singularities belong to the list $\{-\frac{11}{10}\pm\frac{\sqrt{21}}{10},
-\frac{9}{10}\pm\frac{\sqrt{21}}{10},-\frac{3}{2}\pm\frac{\sqrt{21}}{6}\}$. From Siegel and Liouville theorems we deduce that all three singularities are linearizable.
By applying Theorem~\ref{main} we obtain that the fundamental group of each leaf of $\F$ is solvable because it is a subgroup of $G$. Therefore all the leaves of $\F$ are disks or annuli.\qed
\end{ex}

The precedent arguments show a more general result.

\begin{cor}
Let $(S,O)$ be a surface singularity such that the fundamental group of $S\setminus\{O\}$ is solvable. If $\F$ is a singular holomorphic foliation on $(S,O)$ without local separatrices
then all the leaves of $\F$ are disks and annuli.
\end{cor}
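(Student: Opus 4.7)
The plan is to combine the incompressibility of the leaves supplied by Theorem~\ref{main} with the purely algebraic fact that a subgroup of a solvable group which is also free must be cyclic.

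First, let $\uM\to S$ denote the minimal good resolution of $(S,O)$ with exceptional divisor $\uC$, and let $E:M\to\uM$ be the additional sequence of blow-ups that makes $\F:=E^{*}\uF$ reduced, with $\mc C:=E^{-1}(\uC)$ a normal crossing divisor. The hypothesis that $\uF$ has no local separatrices at $O$ has two consequences. On the one hand, no irreducible component of $\mc C$ can be dicritical: a dicritical component projected to $O$ would produce a one-parameter family of local separatrices of $\uF$. On the other hand, the isolated separatrix set $\mc S$ is empty. We may therefore take $\mc D:=\mc C$, which satisfies the requirements of Definition~\ref{adapted}. Following the same pattern as in Example~\ref{sup}, the hypotheses (L) and (G) of Theorem~\ref{main} are then verified using the Camacho--Sad index theorem together with the Siegel/Brjuno linearization criterion.

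Theorem~\ref{main} now produces a fundamental system $(U_{n})_{n\in\mb N}$ of neighborhoods of $\underline{\mc D}=\uC$ in $\uM$ such that each leaf of $\uF_{|U_{n}^{\ast}}$ is incompressible in $U_{n}^{\ast}:=U_{n}\setminus\uC$, and such that the inclusions $U_{n}^{\ast}\subset V^{\ast}$ induce isomorphisms on fundamental groups. Since $V^{\ast}$ deformation retracts onto $\uM\setminus\uC\cong S\setminus\{O\}$, we obtain $\pi_{1}(U_{n}^{\ast})\cong\pi_{1}(S\setminus\{O\})$, which is solvable by hypothesis. For any leaf $L$ of $\uF_{|U_{n}^{\ast}}$, incompressibility provides an injection $\pi_{1}(L)\hookrightarrow\pi_{1}(U_{n}^{\ast})$, so $\pi_{1}(L)$ is itself solvable. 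Moreover $L$ is an open (non-compact) Riemann surface, hence has the homotopy type of a one-dimensional CW-complex, so $\pi_{1}(L)$ is a free group. A free group that is also solvable has rank at most one; thus $\pi_{1}(L)$ is either trivial or infinite cyclic, and by the classification of non-compact oriented surfaces, $L$ is topologically a disk or an annulus. Since every leaf of the germ $\F$ on a sufficiently small representative of $(S,O)$ coincides with a leaf of $\uF_{|U_{n}^{\ast}}$ for $n$ large enough, the stated conclusion follows.

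The main obstacle is the verification of the hypotheses (L) and (G) of Theorem~\ref{main} in this general setting. Condition (G) is the milder of the two: surface singularities whose link has solvable fundamental group have resolution graphs that are trees of rational curves, so the associated break graph is automatically a forest with no positive-genus vertices. The delicate point is condition (L): the absence of saddle-nodes and the linearizability of the singularities with irrational Camacho--Sad index are not topological features of $S\setminus\{O\}$, and must instead be extracted from the arithmetic of the Camacho--Sad indices after computing them, following the procedure illustrated in Example~\ref{sup}.
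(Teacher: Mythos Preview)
Your overall strategy matches the paper's: apply Theorem~\ref{main} to get incompressibility, then use that a free subgroup of a solvable group is cyclic. The paper proves the corollary in one sentence by pointing to the only nontrivial step, namely the verification of~(L), and this is precisely where your proposal has a gap.

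You acknowledge that~(L) is ``the delicate point'' but do not actually establish it; saying it ``must be extracted from the arithmetic of the Camacho--Sad indices'' is not a proof. The paper's argument is that Wagreich's classification of configurations with solvable local fundamental group, combined with the absence of separatrices, forces every Camacho--Sad index to be an \emph{algebraic} number; by Liouville's theorem such numbers are Brjuno, hence the corresponding singularities are linearizable. This is the whole content of the corollary beyond Example~\ref{sup}, and it is missing from your write-up.

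Your justification of~(G) is also incorrect as stated. You assert that solvable local fundamental group forces the resolution graph to be a tree, but Example~\ref{sup} itself has a \emph{triangular} dual graph. What actually makes~(G) hold is a consequence of~(L): once every singularity is linearizable, every edge of $\mc G_{\mc D}$ is a breaking element, so the break graph has no edges at all and is trivially a forest (with all components rational, again by Wagreich). So~(G) is not independent of~(L) here; it follows from it.

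Finally, a small point: you should check condition~(\ref{condd}) of Definition~\ref{adapted} when taking $\mc D=\mc C$, rather than asserting it.
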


\begin{dem}
The classification of configurations with solvable fundamental group given by \cite{Wagreich} and the
hypothesis about the non-existence of local separatrices force all the Camacho-Sad indices to be algebraic numbers, hence  of Brjuno type.
\end{dem}

\bigskip

To deal with the second objective of the paper, the topological classification, we fix the topological type of $\uC$ as embedded divisor in $\uM$, a divisor $\uD$ adapted to $(\uF,\uC)$, a fundamental system $(U_{n})_{n\in\mb N}$ of neighborhoods of $\uC$ fulfilling conditions (i)-(v) of Theorem~\ref{main} and a universal covering $q:\wt U_{0}\to U_{0}\setminus\uD$.
In the sequel we will use the following notations:
$$
\hbox{if}\quad A\subset U_{0} \quad\hbox{then}\quad A^{*}=A\setminus\uD\quad \hbox{and}\quad \wt A=q^{-1}(A^{*}).
$$
Thanks to Property (i) in Theorem~\ref{main}, we can take the restriction of $q$ to $\wt U_{n}$ as universal covering of $U_{n}^{*}$. The deck transformation groups of all these coverings will be identified to $\Gamma:=\mr{Aut}_{q}(\wt U_{0})$. We denote by $\mc Q_{n}$ the leaf space of the foliation 
induced by $\uF$ in $\wt U_{n}$. Clearly the holomorphic natural maps $\mc Q_{n+1}\to\mc Q_{n}$ form an inverse system denoted by $\mc Q^{\uF}$.
As we already pointed out in \cite[\S3]{monodromy} in the local setting,
each deck transformation factorizes through $\mc Q_{n}$ and allows us to consider the notion of monodromy. To this end, we denote by $\underleftarrow{\mc A}$ the category of inverse systems of objects in some category $\mc A$. We refer to \cite{Douady,monodromy} for a precise description of the morphisms in $\underleftarrow{\mc A}$.
\begin{defin}\label{mon}
The monodromy representation of $\uF$ along $\uD$ is the natural morphism of groups
$$\mf m^{\uF}:\Gamma\to\mr{Aut}_{\proan}(\mc Q^{\uF}),$$
where $\proan$ is the category of pro-objects associated to the category $\mr{An}$ of analytic spaces.
\end{defin}

Consider now another foliation $\uF'$ defined in a neighborhood  of a curve $\uC'$
embedded in a surface $\uM'$ and a divisor $\uD'$ adapted to $(\uF',\uC')$.
In order to state our second main result, we need to adapt to our new context some additional notions that we have already considered in \cite{monodromy}:\\

\begin{enumerate}[$\bullet$]
\item We say that a topological conjugation 
between the germs $(\uF,\uD)$ and $(\uF',\uD')$ is \textbf{$\mc S$-transversely holomorphic} if it is transversely holomorphic outside some nodal and dicritical separators.
We have the same notion for conjugations between the germs $(\F,\mc D)$ and $(\F',\mc D')$.
Notice that if there are no dicritical components nor nodal singularities then a  $\mc S$-transversely holomorphic conjugation is just a transversely holomorphic conjugation.\\
\item A \textbf{$\mc S$-conjugation} between the monodromies $\mf m^{\uF}$ and $\mf m^{\uF'}$ consists in $(\varphi,\tilde\varphi,h)$ where $h:\mc Q^{\uF}\to\mc Q^{\uF'}$ is an isomorphism in the category $\protop$,
which is holomorphic outside the subset corresponding to the leaves of some nodal and dicritical separators (we will say that $h$ is a \textbf{$\mc S$-$\proan$ isomorphism}), $\varphi:(\underline{U},\uD)\to (\underline{U}',\uD')$ is a germ of homeomorphism defined in some neighborhoods of $\uD$ and $\uD'$
and $\tilde\varphi$ is a lifting of $\varphi$ to the universal coverings of $\underline{U}\setminus\uD$ and $\underline{U}'\setminus\uD'$ such that
the following diagram commutes
$$\begin{array}{ccc}
\Gamma & \stackrel{\mf m^{\uF}}{\longrightarrow} &  \mr{Aut}_{\proan}(\mc Q^{\uF})\ \subset\  \mr{Aut}_{\protop}(\mc Q^{\uF})\\
\tilde\varphi_{*}
    \downarrow\phantom{\tilde\varphi_{*} } &  &  \phantom{\mr{Aut}_{\proan}(\mc Q^{\uF})\ \subset\   h_{*}}\downarrow h_{*}\\
   \Gamma' &  \stackrel{\mf m^{\uF'}}{\longrightarrow}  & \mr{Aut}_{\proan}(\mc Q^{\uF'})\ \subset\    \mr{Aut}_{\protop}(\mc Q^{\uF'}).
  \end{array}$$
In addition, we say that $(\varphi,\tilde\varphi,h)$ is \textbf{realized over }germs of subsets $\Sigma \subset \underline{M}$ and $\Sigma '\subset \underline{M}'$, if $\varphi(\Sigma)=\Sigma'$ and the following diagram commutes:
$$\begin{array}{ccc}
\wt\Sigma & \stackrel{\wt\varphi_{|\wt\Sigma}}{\longrightarrow} & \wt\Sigma'\\
\downarrow & & \downarrow\\
\mc Q^{\uF} & \stackrel{h}{\longrightarrow} & \mc Q^{\uF'},
\end{array}$$
where the vertical arrows are the natural morphisms.
These notions also apply to
$(\F,\mc D)$ and $(\F',\mc D')$.\\
\item We define the \textbf{cut divisor} $\cD^{\mr{cut}}$ as the disjoint union of the closure of each connected component of the complementary in $\cD$ of nodal singular points and dicritical components of $(\F,\cD)$.
Notice that the dual graph of $\cD^{\mr{cut}}$ is not the break graph of $(\F, \cD)$. These notions are independent.
\\
\item A \textbf{$\mc S$-collection of transversals} for $\uF$ and $\uD$ is a finite collection $\Sigma=\{(\Sigma_{i},p_{i})\}_i$, where each $(\Sigma_{i},p_{i})$ is the image by $E:M\to\uM$ of the germ of a regular curve transverse  to $\F$ at a regular point $p_{i}
\in\cD\setminus\mr{Sing}(\cD)$ not belonging to the exceptional divisor $\mc E$ of $E$, the whole collection satisfying that for each connected component $\cD_{\alpha}^{\mr{cut}}\subset\cD^{\mr{cut}} $ of the cut divisor there exists $i\in\{1,\ldots,m\}$ such that $p_{i}\in\cD_{\alpha}^{\mr{cut}}$.
The existence of a such collection follows from the below lemma whose proof is just adapted from that
of the Strong Camacho-Sad Separatrix Theorem given in \cite{OBRGV10}.
\begin{lema}
There is no irreducible component of $\mc D^{\mr cut}$  contained in the exceptional divisor $\mc E$ of $E$.
\end{lema}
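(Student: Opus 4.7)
The plan is to argue by contradiction. Suppose some irreducible component $K$ of $\mc D^{\mr{cut}}$ is entirely contained in $\mc E$. By construction of the cut divisor, $K$ is a compact connected union of $\F$-invariant irreducible components $C_{i_1},\dots,C_{i_r}$ of $\mc C$, each with negative self-intersection (since $K\subset\mc E$), whose intersection with $\mc D\setminus K$ reduces to: (a) nodal singular points of $\F$, where $K$ meets some isolated separatrix $Z\in\mc S$; and (b) transverse crossings with dicritical components, which are regular points of $\F$ by reducedness.

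Next I would apply the Strong Camacho-Sad Separatrix Theorem of \cite{OBRGV10} to the invariant compact subcurve $K\subset M$. Writing the Camacho-Sad index formula $\sum_{p\in C_{i_j}}\mr{CS}(\F,C_{i_j},p)=C_{i_j}^{2}<0$ on each component of $K$ and observing that the nodal cut points contribute Camacho-Sad indices in $\R_{>0}$, the negativity of the self-intersections forces at least one interior singular point $p\in K$ of $\F$ whose second local separatrix $Z_p$ is not contained in $K$. Adapting the argument of \cite{OBRGV10} to the present ``boundary'' structure of $K$—nodal cuts contributing positively and purely transverse dicritical crossings not contributing at all—yields such a $p$ which is moreover not a nodal cut point.

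Finally I would derive the contradiction. Since $p$ is a singularity of $\F$, it cannot lie on a dicritical component. Because $p$ is not a nodal cut point, if $Z_p$ were an irreducible component of $\mc C$ distinct from those comprising $K$, then $p$ would be a non-nodal, non-dicritical intersection of two invariant components and hence not a cut point, so this other component would lie in the same element of $\mc D^{\mr{cut}}$ as $K$, enlarging $K$—a contradiction. Therefore $Z_p$ is an invariant curve not contained in $\mc C$, so $Z_p\in\mc S\subset\mc D$ by condition (b) of Definition~\ref{adapted}. Since $p$ is not a cut point, $Z_p$ and $K$ lie in the same connected component of the complement of the cut locus in $\mc D$, hence $Z_p\subset K$. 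But isolated separatrices meet the exceptional divisor transversely and are never contained in $\mc E$, contradicting $K\subset\mc E$.

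The main obstacle is faithfully adapting the index estimate from \cite{OBRGV10}: the original argument treats a compact invariant curve with no permitted ``boundary,'' whereas here one must allow nodal cuts (contributing positive real indices) and dicritical attachments (contributing nothing, being regular points of $\F$), and verify that the strict negativity of each $C_{i_j}^{2}$ still forces the existence of a non-cut singularity whose second separatrix exits $K$.
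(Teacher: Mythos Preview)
Your approach is essentially the paper's: both argue by contradiction and rest on the same combinatorial lemma from \cite{OBRGV10} (non-existence of a well-balanced fair quasi-proper tree). The paper phrases the contradiction directly—it weights the dual tree of the hypothetical component $\mc D_\alpha^{\mr{cut}}\subset\mc E$ by $-D_i\cdot D_i$ at vertices and $\wp_{ij}=-\mathrm{Re}\,\CS(\F,D_i,s_{ij})$ at edges, verifies \emph{fairness} from the index formula together with the non-negative boundary contributions you identify (zero at dicritical attachments, positive at nodal cuts), verifies \emph{well-balancedness} ($\wp_{ij}\wp_{ji}\le 1$) from reducedness, and invokes the non-existence result—whereas you recast the same lemma as the existence of a non-cut singularity with escaping separatrix and then, in your final step, observe that such a point cannot lie on a cut-component. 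These are equivalent formulations; your step~3 is the tautological unpacking that the paper leaves implicit.

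One small correction: nodal cut points need not lie on an isolated separatrix $Z\in\mc S$—a node can equally well sit at the intersection of two $\F$-invariant components of $\mc C$—but this does not affect your argument, since you only use that their Camacho-Sad contribution is a positive real number.
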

\begin{proof} By contradiction, let $\mc D_{\alpha }^{\rm cut}$ be a component of $\mc D^{\rm cut}$ contained in $\mc E$ and denote by $\mc T$ its dual graph. As in \cite[Section 3]{OBRGV10} the vertices $\mathbf{s}_i$ of $\mc T$ are weighted by the self-intersection of the corresponding component $D_i$ multiplied by $-1$ and to each edge $\mathbf{a}_{ij}$ (joining $\mathbf{s}_i$ and $\mathbf{s}_j$) is associated the pair $(\wp_{ij},\wp_{ji})$, where $-\wp_{ij}$ is equal to the real part Camacho-Sad index $\CS(\F, D_i, s_{ij})$ and $\{s_{ij}\}:=D_i\cap D_j$. At the singular points $s$ of $\mc D$ lying in  the regular part of  $\mc D_{\alpha }^{\mr cut}$ the Camacho-Sad index of $\F$ are not negative real number.
Indeed, it is zero if $s$ is the attaching point of a dicritical component and it is positive if $s$ is a nodal singularity.
Then the  index formulae give the inequalities $\sum_{j}\wp_{ij}\geq D_i\cdot D_i$ and, using the terminology introduced in \cite{OBRGV10}, $\mc T$ is a fair quasi-proper tree. 
We also have the inequalities  $\wp_{ij}\wp_{ji}\leq 1$ and $\mc T$ is well-balanced.  This cannot occur because of Lemma 2.1 of \cite{OBRGV10}, which asserts the no existence of well balanced fair proper tree, is extended to quasi-proper trees in \cite[Section~4]{OBRGV10}.
\end{proof}
\begin{obs} The method developed  in \cite{OBRGV10} immediately give a lower bound for the number of isolated separatrices for dicritical foliations in terms of the number of nodal singularities and dicritical components.
\end{obs}
\item We say that a foliation $\uF$ is \textbf{$\mc S$-transversely rigid} if every topological conjugation between $\uF$ and another foliation $\uF'$ is necessarily $\mc S$-transver\-se\-ly holomorphic. There are many situations in which we have this property. For instance, an extended version of the Transverse Rigidity Theorem  of \cite{Rebelo} already used in \cite{monodromy} asserts that the following condition implies the $\mc S$-transversal rigidity:\\
\begin{enumerate}[(R)]
\item \textit{Each connected component of the cut divisor contains an irreducible component with non-solvable holonomy group.}\\
\end{enumerate}
\item
We call \textbf{$\uD$-extended divisor} every curve $\uD^{+}\supset\uD$ such that $\overline{\uD^{+}\setminus\uD}$ consists in the union of pairs of non-isolated separatrices, one pair for each dicritical separator of $\uF$.\\
\item A germ of homeomorphism $\varphi:(M,\cD)\to(M',\cD')$ is \textbf{excellent} if it satisfies the following properties:
\begin{enumerate}[(a)]
\item outside some neighborhoods of the singular locus of $\cD$ and $\cD'$, $\varphi$ conjugates the smooth disk fibrations $\pi_{i}$ and $\pi_{i}'$ given by Lemma~\ref{lema1};
\item $\varphi$ is holomorphic in a neighborhood of the singular set of $\cD$.
\end{enumerate}
\end{enumerate}

\begin{thmm}\label{main2}
Let $\mc D$ (resp. $\mc D'$) be a divisor adapted to $(\F,\mc C)$ (resp. $(\F',\mc C')$). Assume that $(\F,\mc D)$ and $(\F',\mc D')$ satisfy the assumptions (L) and (G). Then the following statements are equivalent:
\begin{enumerate}[(1)]
\item $(\uF,\uD)$ and $(\uF',\uD')$ are $\mc S$-transversely holomorphic conjugated;
\item $(\F,\mc D)$ and $(\F',\mc D')$ are $\mc S$-transversely holomorphic conjugated by an excellent homeomorphism;
\item  there exists a $\mc S$-conjugation $(\uphi,\wt\uphi,h)$ between the monodromies representations of $\uF$ along $\uD$  and $\uF'$ along $\uD'$, which is
realized over $\mc S$-collec\-tions of transversals, such that:
\begin{enumerate}
\item there exist a $\uD$-extended divisor $\uD^{+}$ such that $\uphi(\uD^{+})$ is a $\uD'$-extended divisor; in addition, for each irreducible component $D$ of $\uD$ we have that $D$ is $\uF$-invariant if and only if $\uphi(D)$ is $\uF'$-invariant;
\item for each singular point $s$ of $\uF$ and each  invariant local irreducible component of $\uD$ at $s$ we have the equality of Camacho-Sad indices $\CS(\uF,D,s)=\CS(\uF',\uphi(D),\uphi(s))$;
\end{enumerate}
\item there exists a $\mc S$-conjugation $(\varphi,\wt\varphi,h)$ between the monodromies representations of $\F$ along $\mc D$  and $\F'$ along $\mc D'$, which is
realized over $\mc S$-collec\-tions of transversals, such that:
\begin{enumerate}
\item for each irreducible component $D$ of $\mc D$ we have that $D$ is $\F$-invariant if and only if $\varphi(D)$ is $\F'$-invariant;
\item for each invariant local irreducible component $D\subset\cD$ at a point $s\in D\cap\mr{Sing}(\cD)$ we have $\CS(\F,D,s)=\CS(\F',\varphi(D),\varphi(s))$;
\item $\varphi$ is excellent.
\end{enumerate}
\end{enumerate}
 Moreover, if $\uF$ satisfies Condition~(R) (more generally if $\uF$ is $\mc S$-transversely rigid) then the precedent properties (1)-(4) are also equivalent to:
\begin{enumerate}[(1')]
\item $(\uF,\uD)$ and $(\uF',\uD')$ are topologically conjugated;
\item $(\F,\cD)$ and $(\F',\cD')$ are topologically conjugated by an excellent homeomorphism.
\end{enumerate}
\end{thmm}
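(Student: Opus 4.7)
The plan is to establish the cycle $(1)\Leftrightarrow(2)$, $(3)\Leftrightarrow(4)$, $(1)\Rightarrow(3)$, and $(4)\Rightarrow(2)$, and then to deduce $(1)\Leftrightarrow(1')$ and $(2)\Leftrightarrow(2')$ from the $\mc S$-transverse rigidity hypothesis. The two equivalences $(1)\Leftrightarrow(2)$ and $(3)\Leftrightarrow(4)$ come from the birational morphism $E$: since $E$ is biholomorphic outside the exceptional divisor $\mc E$ and every component of $\mc E$ not contained in $\mr{Sing}(\cD)$ already lies inside $\cD$, $E$ induces a biholomorphism $M\setminus\cD\iso\uM\setminus\uD$, hence an identification of universal covers, leaf spaces, monodromy representations, transversals and $\mc S$-collections thereof. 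An excellent homeomorphism between the reduced pairs projects to an ambient homeomorphism because its excellence forces holomorphy on a neighborhood of $\mc E\cap\cD$; conversely any ambient conjugation lifts by the universal property of the reduction of singularities and becomes excellent once it is coupled with the local holomorphic model prescribed on $\mr{Sing}(\cD)$ by the equality of Camacho-Sad indices and Condition~(L).

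For $(1)\Rightarrow(3)$ and $(2)\Rightarrow(4)$: a $\mc S$-transversely holomorphic conjugation $\varphi$ lifts $\Gamma$-equivariantly to $\wt\varphi:\wt U_{0}\to \wt U'_{0}$ between the universal covers provided by Theorem~\ref{main}. Since $\wt\varphi$ sends leaves to leaves it descends to a morphism of inverse systems of leaf spaces $h:\mc Q^{\uF}\to\mc Q^{\uF'}$ in $\protop$, and the commutativity with the monodromies is tautological. The $\mc S$-$\proan$ character of $h$ comes from the transverse holomorphicity of $\varphi$ outside the finitely many nodal and dicritical separators. Taking any $\mc S$-collection $\Sigma$ for $\uF$ and setting $\Sigma':=\uphi(\Sigma)$ provides the required realization; properties (3)(a), (3)(b) (and their primed analogues) are preserved because the separator structure, the invariance of irreducible components of $\cD$, and the Camacho-Sad index are topological invariants of reduced singularities.

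The core of the theorem is the reconstruction $(4)\Rightarrow(2)$. Given $(\varphi,\wt\varphi,h)$ as in~(4), the plan is the following. First, in a neighborhood of each point $s\in\mr{Sing}(\cD)$ with $s':=\varphi(s)$, Condition~(L) together with the equality $\CS(\F,D,s)=\CS(\F',\varphi(D),s')$ forces the germs of $\F$ at $s$ and of $\F'$ at $s'$ to be holomorphically conjugate, and we fix a local conjugation matching the prescribed transversal data. Second, off $\mr{Sing}(\cD)$ and off a finite union of nodal and dicritical separators, every point of $W\setminus\cD$ can be joined to one of the transversals $\Sigma_{i}$ by a leaf-path. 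The value of the extension at such a point is defined by lifting the path to $\wt U_{n}$, applying $\wt\varphi$, and projecting through $h$. Theorem~\ref{main} is essential at this stage: property (ii) (incompressibility of each leaf in $U_{n}^{\ast}$) ensures that the resulting value does not depend on the chosen leaf-path, because two such paths are homotopic in the leaf and therefore in the ambient; property (iv) prevents two distinct points of a transversal from being identified; property (iii) together with the $\mc S$-$\proan$ nature of $h$ yields transverse holomorphicity off the separators; and property (v) is what allows the construction to descend coherently at the leaf-space level. Inside each nodal and dicritical separator the extension is completed by hand using the standard fibered model of such a separator; no transverse holomorphicity is required there by definition of $\mc S$-conjugation. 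Finally, the leafwise map is glued with the local holomorphic charts near $\mr{Sing}(\cD)$ via the plumbing structure of $W$ and the disk fibrations $\pi_{i}$ of Lemma~\ref{lema1}, producing an excellent $\mc S$-transversely holomorphic conjugation.

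The main obstacle is exactly this leaf-by-leaf extension, where one must simultaneously control the matching of the nodal and dicritical separators of $\uF$ and $\uF'$, the compatibility of the transversely holomorphic extension across the various connected components of the break graph of $(\F,\cD)$, and the excellence near $\mr{Sing}(\cD)$; the argument is a global analogue of the local construction carried out in \cite{monodromy}. Once $(1)$--$(4)$ have been shown equivalent, the equivalences $(1)\Leftrightarrow(1')$ and $(2)\Leftrightarrow(2')$ follow at once under Condition~(R): the $\mc S$-transverse rigidity theorem upgrades any topological conjugation to a $\mc S$-transversely holomorphic one, and excellence is preserved by this upgrade because it already prescribes holomorphy in a neighborhood of $\mr{Sing}(\cD)$.
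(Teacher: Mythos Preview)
Your logical scheme differs from the paper's: the paper proves only $(2)\Rightarrow(1)$ trivially and then closes the cycle $(1)\Rightarrow(3)\Rightarrow(4)\Rightarrow(2)$. You instead claim both $(1)\Leftrightarrow(2)$ and $(3)\Leftrightarrow(4)$ directly from the blow-up map $E$. This is where the gap lies.

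The direction $(1)\Rightarrow(2)$ (and likewise $(3)\Rightarrow(4)$) is \emph{not} a formal consequence of $E$ being biholomorphic off the exceptional divisor. There is no ``universal property of the reduction of singularities'' for topological or transversely holomorphic homeomorphisms: a conjugation $\uphi:(\uM,\uD)\to(\uM',\uD')$ need not lift to any homeomorphism $(M,\cD)\to(M',\cD')$, let alone an excellent one. Producing such a lift is precisely the content of the paper's Lemma~\ref{marking}, whose proof occupies three steps and relies on the machinery of \cite{marquage}: one first replaces $\uphi$ by a piece-adapted map on the $3$-Milnor tube respecting the circle fibrations, then invokes Waldhausen's theorem \cite{Waldhausen} on the collar pieces to upgrade to a homeomorphism, and finally extends inward using the disk fibrations and corrects by Dehn twists (via \cite[\S8.4]{monodromy}) so that $\wt\varphi_{\bullet}=\wt\uphi_{\bullet}$ on $\pi_{0}(\wt\Sigma)$. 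None of this is captured by your one-sentence appeal to $E$.

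Two smaller points. In $(1)\Rightarrow(3)$, setting $\Sigma':=\uphi(\Sigma)$ does not give a $\mc S$-collection of transversals: $\uphi$ is only a homeomorphism, so $\uphi(\Sigma)$ need not consist of holomorphic curves transverse to $\uF'$. The paper composes with an $\uF'$-isotopy (via \cite[Proposition~3.6.4]{monodromy}) to repair this. Also, the Camacho-Sad index is not a purely topological invariant of a reduced singularity in general; the paper invokes Rosas's Theorem~\ref{rosas} for nodal separatrices and \cite[\S7.2]{monodromy} in the transversely holomorphic case to obtain (3)(b).

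Your sketch of $(4)\Rightarrow(2)$ is in the right spirit, though the paper organizes it differently: rather than a direct leaf-path extension, it applies the Extension Lemma \cite[Lemma~8.3.2]{monodromy} iteratively along a spanning tree $\cD^{\mr{tree}}$ of the cut divisor, checks compatibility across the removed singularity blocks by comparing two realizations on a common transversal, and then fills in the nodal and dicritical separators by hand.
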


\begin{obs}
The proof of Theorem~\ref{main2} shows in fact that the  conjugations in (1) and (2) (or (1') and (2')) are homotopic in the complementary of the corresponding divisors.
\end{obs}

\begin{corm}\label{corolb}
Let $\F$ be a germ of singular holomorphic foliation in $(\C^{2},0)$ which is a generalized curve such that all its singularities after reduction whose Camacho-Sad index is not rational are linearizable. Assume that $\F$ satisfies Condition (R) below. Let $\F'$ be another germ of singular holomorphic foliation in $(\C^{2},0)$.
Then for every topological conjugation germ $\varphi:(\mb B,\F)\to(\mb B',\F')$ there exists a new topological conjugation germ $\wh\varphi:(\pi^{-1}(\mb B),\pi^{*}\F)\to(\pi'^{-1}(\mb B'),\pi'^{*}\F')$ defined after the reduction processes $\pi$ and $\pi'$ of singularities of $\F$ and $\F'$ such that
\begin{enumerate}
\item $\wh\varphi$ is holomorphic at a neighborhood of $\mr{Sing}(\pi^{*}\F)$,
\item there exist
germs of invariant curves $\mc Z\subset\mb B$ and $\mc Z'\subset\mb B'$
satisfying conclusions of
Corollary~\ref{B} such that $\varphi(\mc Z)=\mc Z'$, $\wh\varphi(\pi^{-1}(\mc Z))=\pi'^{-1}(\mc Z')$ and the restrictions
$\varphi:\mb B\setminus\mc Z\to \mb B'\setminus\mc Z'$ and $\wh\varphi:
\pi^{-1}(\mb B\setminus \mc Z)\to \pi'^{-1}(\mb B'\setminus\mc Z')$ are homotopic.
\end{enumerate}
In particular, the analytic type of the singularities of $\pi^{*}\F$ and its projective holonomy representations are topological invariants of the germ of $\F$ at~$0$.
\end{corm}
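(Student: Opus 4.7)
The plan is to deduce the corollary by combining Corollary~\ref{B} and Theorem~\ref{main2}, exploiting the $\mc S$-transverse rigidity granted by Condition~(R). Since $\F$ is a generalized curve with reduced singularities either linearizable or with rational Camacho--Sad index, Condition~(L) holds automatically for the desingularized pair $(\F,\mc C)$, where $\mc C=\pi^{-1}(0)$; Condition~(G) also holds because $\mc C$ is the exceptional divisor of a sequence of blow-ups over $(\C^2,0)$, whose dual graph is a tree (so the break graph decomposes into trees), and because in the local setting there is at most one initial component of strictly positive genus per component. Corollary~\ref{B} then provides a curve $\mc Z$ containing the isolated separatrices (plus one non-isolated separatrix over each dicritical component that carries a unique singularity of $\mc C$) and an adapted divisor $\mc D$ with $E(\mc D)=\mc Z$.

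Next I observe that the hypotheses transfer to $\F'$: being a topological invariant, the generalized-curve property and the corresponding linearizability of non-rational-index singularities hold also for $\F'$, since the topological conjugation $\varphi$ preserves the combinatorics of the reduction (dual graph, valences, dicritical/invariant nature of components) and the Camacho--Sad indices up to rational data. Hence I apply Corollary~\ref{B} symmetrically to $\F'$, obtaining $\mc Z'$ and an adapted $\mc D'$ with $E'(\mc D')=\mc Z'$. Because $\varphi$ must send isolated separatrices to isolated separatrices and dicritical components to dicritical components (both characterized topologically by the behavior of nearby leaves), one can arrange $\mc Z'$ and the auxiliary non-isolated separatrices so that $\varphi(\mc Z)=\mc Z'$.

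Now I invoke Theorem~\ref{main2}. Hypothesis~(R) implies that $\F$ is $\mc S$-transversely rigid, so the topological conjugation $\varphi$ corresponds to statement (1') of the theorem, which is equivalent to (2): there exists an excellent homeomorphism $\wh\varphi$ between $(\F,\mc D)$ and $(\F',\mc D')$ that is $\mc S$-transversely holomorphic. By definition of \emph{excellent}, $\wh\varphi$ is holomorphic on a neighborhood of $\mr{Sing}(\mc D)$, which contains $\mr{Sing}(\pi^*\F)$; this yields property~(1) of the corollary. The remark following Theorem~\ref{main2} asserts that $\varphi$ and $\wh\varphi$ are homotopic in the complement of the divisors $\mc Z$ and $\pi^{-1}(\mc Z)$ respectively, giving property~(2). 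The ``in particular'' clause is then automatic: the analytic type of each singular point of $\pi^*\F$ is preserved by $\wh\varphi$ because $\wh\varphi$ is biholomorphic there, and the projective holonomy representations are preserved because the $\mc S$-conjugation of monodromies provided by Theorem~\ref{main2}~(4), realized over $\mc S$-collections of transversals, restricts on each invariant component of $\mc D$ to a conjugation of the holonomy representation by a holomorphic identification of transversals.

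The only delicate point — and the step I expect to require the most care — is the coherent matching of $\mc Z$ and $\mc Z'$ through $\varphi$, namely checking that the particular choice of non-isolated separatrices specified by Corollary~\ref{B} on each singly-inhabited dicritical component can be made compatible with $\varphi$. This amounts to noting that $\varphi$ sends any non-isolated separatrix over such a dicritical component to a leaf of $\F'$ whose closure is a non-isolated separatrix over the corresponding dicritical component of $\F'$ (the latter being uniquely determined by the topology of the reduction), so the curves $\mc Z$ and $\mc Z'$ can be chosen to correspond. Everything else is a direct translation between the local setting $(\C^2,0)$ and the semi-global framework of Theorem~\ref{main2} via the reduction morphisms $\pi$ and $\pi'$.
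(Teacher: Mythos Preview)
Your proposal is essentially correct and follows the route the paper intends: the authors give no separate proof of this corollary, so it is meant to be read off directly from Corollary~\ref{B} and Theorem~\ref{main2} together with the remark on homotopy following the latter, exactly as you do.

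Two small refinements are worth making. First, in the local setting every component of the exceptional divisor $\mc C=\pi^{-1}(0)$ is rational, so your clause about ``at most one initial component of strictly positive genus'' is vacuous; the actual content behind Condition~(G) here is that the dual graph of $\mc C$ is a tree and, as shown in the proof of Corollary~\ref{B} (Section~4.2), there is at most one initial component. Second, your transfer of hypothesis~(L) to $\F'$ deserves one more sentence of justification: Condition~(R) makes $\varphi$ an $\mc S$-transversely holomorphic conjugation \emph{before} any appeal to Theorem~\ref{main2}; away from nodal and dicritical separators this conjugates holonomies holomorphically, so Camacho--Sad indices and linearizability at non-nodal irrational-index singularities carry over, while at nodes Theorem~\ref{rosas} matches the indices. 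This is precisely the mechanism used inside the proof of $(1)\Rightarrow(3)$ in Theorem~\ref{main2}, and invoking it up front removes any appearance of circularity in checking that $(\F',\mc D')$ also satisfies~(L) and~(G).
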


Theorem~\ref{main2} with $\uD$ reduced to a point and Corollary~\ref{corolb} generalize Theorems I and II of \cite{monodromy} to the case of dicritical foliations. Moreover, the topological conjugations considered in \cite{monodromy} are assumed to send nodal separatrices into nodal separatrices preserving its corresponding Camacho-Sad indices. In this paper we have used the following result of R. Rosas \cite[Proposition 11]{Rosas} which allows us to eliminate this constraint and to extend our results to general topological conjugations.

\begin{teo}\label{rosas}
Every topological conjugation $\Phi$ between two germs $\F$ and $\F'$ of holomorphic foliations in $(\mb C^{2},0)$ maps nodal  separatrices into nodal separatrices preserving its corresponding Camacho-Sad indices.
\end{teo}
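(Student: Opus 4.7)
The plan is to derive both assertions from a purely topological characterization of nodal singularities, namely the existence of arbitrarily small nodal separators as described in the introduction. First I would pass to the reductions of singularities $E\colon (M,\mc C)\to(\uM,0)$ and $E'\colon(M',\mc C')\to(\uM',0)$ and study the induced topological conjugation $\widehat\Phi$ defined on $M\setminus\mc C\to M'\setminus\mc C'$. Using the incompressibility and structure results (Theorem~\ref{main}) together with Zariski's multiplicity-type arguments for separatrices, one shows that $\widehat\Phi$ carries each punctured isolated separatrix of $\F$ to a punctured isolated separatrix of $\F'$ and, in a purely topological manner, carries a germ of a transverse section at a reduced singular point $s$ of $\F$ onto a germ of transverse section at a corresponding reduced singular point $s'$ of $\F'$.

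The central step is the topological characterization: a reduced singularity $s$ of $\F$ is a node if and only if arbitrarily small neighborhoods $V$ of $s$ contain a closed $\F$-saturated subset $N\subset V$ whose complement $V\setminus N$ is a disconnected neighborhood of the two punctured local separatrices. For the \emph{only if} direction, the linearizable model $\lambda\,x\,dy-y\,dx=0$ with $\lambda\in\mb R_{>0}\setminus\mb Q$ provides the explicit family of separators $\{|x|^{\lambda}=r|y|\}$. The \emph{if} direction is the hard part: one must rule out the existence of such a separator at every non-nodal reduced singular point (complex hyperbolic, real negative, resonant, or saddle-node). For non-resonant irrational indices in $\mb C\setminus\mb R_{>0}$ this follows from the linearizable normal form and an inspection of the leaf dynamics, which shows that every saturated open set containing a punctured local separatrix also accumulates on the other one. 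The remaining cases (Liouville real negative, resonant, and saddle-node) require the more subtle analysis of leaf accumulation via holonomy and vanishing cycles carried out by Rosas in~\cite{Rosas}, which is precisely where the main difficulty lies.

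Once the characterization is established, the conjugation $\Phi$ sends nodal separatrices to nodal separatrices because its lift $\widehat\Phi$ must send nodal separators to nodal separators (they are characterized by saturation plus a purely topological separation property) and therefore nodal singular points to nodal singular points. For the equality of Camacho-Sad indices at a pair of corresponding nodal singularities $s$ and $s'$, I would use that the holonomy of one local separatrix at a node is analytically conjugate to the irrational rotation $z\mapsto e^{2\pi i\lambda}z$ (and of the other separatrix to $z\mapsto e^{2\pi i/\lambda}z$), and that $\widehat\Phi$ restricted to suitable transversals conjugates these two rotations topologically. Naishul's theorem on the topological invariance of the rotation number of a germ of holomorphic diffeomorphism at a fixed point then gives $\lambda\bmod 1$ as a topological invariant; combining with the analogous statement for the second separatrix (giving $\lambda^{-1}\bmod 1$) recovers $\lambda$ itself, concluding the proof.
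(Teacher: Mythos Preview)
Your approach differs substantially from the paper's sketch. The paper argues via the $2$-torus $T$ onto which a tubular neighborhood of a punctured nodal separatrix retracts: the restriction of the foliation to $T$ is, after identification, a linear irrational foliation of slope $\lambda$, and the key step is to show that the conjugation can be arranged to respect the canonical $\mb Z$-bases of $H_1(T,\mb Z)$ and $H_1(T',\mb Z)$, hence to be homotopic to the identity on the torus, forcing the slopes to agree. This single torus argument simultaneously yields that nodal separatrices go to nodal separatrices and that the Camacho-Sad index is preserved.

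Your separator-based characterization of nodes is a reasonable alternative route to the first assertion, though the hard direction you defer to~\cite{Rosas} is essentially the content of the theorem itself, so that part of the argument is close to circular as written.

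The genuine gap is in your final step. Naishul's theorem applied to the holonomies only recovers $\lambda \bmod 1$ from one separatrix and $\lambda^{-1}\bmod 1$ from the other, and these two data do \emph{not} determine $\lambda$. A concrete counterexample: take $\lambda=\sqrt{3}-1$ and $\lambda'=\sqrt{3}+1$. Then
\[
\lambda\bmod 1=\lambda'\bmod 1=\sqrt{3}-1,\qquad \lambda^{-1}\bmod 1=(\lambda')^{-1}\bmod 1=\tfrac{\sqrt{3}-1}{2},
\]
yet $\lambda\neq\lambda'$ and $\lambda\lambda'=2\neq 1$, so $\lambda'\neq\lambda^{-1}$ either. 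Thus the holonomy rotation numbers alone cannot distinguish these two nodal Camacho-Sad indices. The paper's approach avoids this precisely because the slope of the linear foliation on the full $2$-torus is an element of $\mb R$, not of $\mb R/\mb Z$; the control on the canonical $H_1$-basis is genuinely two-dimensional information that your reduction to one-dimensional holonomy discards.
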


\noindent The idea of the proof is the following.
\begin{enumerate}[(a)]
\item Let $Z$ be a nodal separatrix of $\F$. Any tubular neighborhood  of $Z\setminus\{0\}$ retracts into a $2$-torus $T$ whose first homology group is endowed with a natural $\mb Z$-basis given by monomial coordinates after reduction of singularities of the foliation, cf. \cite[Definition 6.1.2]{monodromy}.
\item Up to a foliated isotopy we can assume that $\Phi$ preserves the $2$-tori $T$ and $T'$ corresponding to $Z$ and $Z':=\Phi(Z)$. It is possible to prove that $\Phi_{*}:H_{1}(T,\mb Z)\to H_{1}(T',\mb Z)$ conjugates its corresponding canonical basis, see \cite[Theorem~10]{Rosas} and \cite[Theorem~6.2.1]{monodromy}.
\item We can canonically identify $T$ and $T'$ with the standard $2$-torus and $\F_{|T}$ and $\F'_{|T'}$ with $1$-dimensional linear irrational foliations.
 It remains to see that the slopes of two linear foliations on the torus are equal once we assume that they are topologically conjugated by a homeomorphism homotopic to the identity.
\end{enumerate}

\section{Localisation}
\subsection{Plumbing}\label{plumbing}
The following result is well known in the literature, cf. for instance \cite{Mumford,Neumann-Eisenbud,Wagreich,Wall}:

\begin{lema}\label{lema1}
There exist an open tubular neighborhood $W$ of $\mc C$ in $M$ and a decomposition $W=\bigcup_{i\in \mf I}W_{i}$ satisfying the following conditions:
\begin{enumerate}[(i)]
\item each $W_{i}$ is a tubular neighborhood of an irreducible component $\mc C_{i}$ of $\mc C$;
\item each $W_{i}$ admits a smooth disk fibration $\pi_{i}:W_{i}\to\mc C_{i}$ over $\mc C_{i}$ whose Euler number $-\nu_{i}$ is the self-intersection of $\mc C_i$;  moreover each non\-empty intersection $\mc C_j\cap W_i$, $i\neq j$, is a fiber of $\pi _i$;
\item there exists a differentiable function $h:W\to\R^{+}$ which is a submersion on $W\setminus \mc C$, such that $h^{-1}(0)=\mc C$ and $\{h^{-1}([0,\varepsilon))\}_{\varepsilon >0}$ is a fundamental system of neighborhoods of $\mc C$, which do not meet the boundary of $\overline{W}$ in~$M$;
\item there exists a simplicial map $\pi:W\to\mc C$ having connected fibres whose restriction to $W_{i}\setminus\bigcup\limits_{j\neq i}W_{j}$ coincides with $\pi_{i}$, $i\in \mf I$.
\end{enumerate}
\end{lema}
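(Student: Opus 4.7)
The plan is to build $W$ and its structure by combining, for each component $\mc C_i$, the standard tubular neighborhood theorem with an explicit local product model at the crossing points, and then gluing via partitions of unity.

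First I would treat each irreducible component separately. Since each $\mc C_i$ is a smooth compact complex submanifold of $\uM$, a Hermitian metric on $M$ together with the exponential map yields a small smooth disk-subbundle $W_i^{(0)}\to \mc C_i$ of the normal bundle $N_{\mc C_i/M}$ which embeds as a tubular neighborhood of $\mc C_i$. Because $c_1(N_{\mc C_i/M})[\mc C_i]=\mc C_i\cdot\mc C_i=-\nu_i$, the Euler number of this disk bundle is $-\nu_i$, giving the self-intersection part of (ii).

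Next I would adjust the construction near each crossing point $p\in\mc C_i\cap\mc C_j$. Pick a holomorphic bidisc chart $(U_p;u,v)$ on which $\mc C_i=\{v=0\}$ and $\mc C_j=\{u=0\}$. On a concentric smaller bidisc I declare $\pi_i(u,v):=(u,0)$ and $\pi_j(u,v):=(0,v)$, and on an intermediate annular shell interpolate between this product structure and the $W_i^{(0)}$-structure by a smooth partition of unity. After shrinking radii to rule out new critical behaviour, the resulting $W_i\to \mc C_i$ is still a smooth disk bundle of Euler number $-\nu_i$, and by construction $\mc C_j\cap W_i=\pi_i^{-1}(p)$ is a fibre of $\pi_i$. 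Setting $W:=\bigcup_i W_i$ yields an open neighborhood of $\mc C$ with the required properties (i) and (ii). For (iii), let $h_i$ be the squared fibrewise Hermitian norm on $W_i$ extended by zero outside; in any crossing chart $h_i$, $h_j$ reduce to $|v|^2$ and $|u|^2$ up to smooth positive factors, so $h:=\sum_i h_i$ vanishes exactly on $\mc C$, is a submersion on $W\setminus\mc C$, and its sublevel sets $\{h<\varepsilon\}$ form a fundamental system of neighborhoods once the radii of the disk bundles are chosen small enough to stay away from $\partial\overline W$.

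The main obstacle will be (iv), the construction of a simplicial map $\pi:W\to\mc C$ with connected fibres. I would fix a triangulation of $\mc C$ in which each crossing point is a $0$-simplex and each $\mc C_i$ is a subcomplex. On $W_i\setminus\bigcup_{j\neq i}W_j$ set $\pi:=\pi_i$. Near a crossing point $p\in\mc C_i\cap\mc C_j$ I would work in the crossing chart $(u,v)$ and use the explicit rule $\pi(u,v)=(u,0)$ if $|v|\leq|u|$ and $\pi(u,v)=(0,v)$ if $|u|\leq|v|$: the fibre over a regular point $(u_0,0)\in\mc C_i$, $u_0\neq 0$, is the disk $\{u_0\}\times\{|v|\leq|u_0|\}$, and the fibre over $p$ is the connected ``cross'' $\{|u|=|v|\}$. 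A small smoothing and radial rescaling are required to match this model to the disk fibration $\pi_i$ on the shell where we have already interpolated; that matching, together with the verification that each fibre over every simplex of $\mc C$ is connected, is the delicate combinatorial step. Once done, pulling back a compatible triangulation to $W$ via this continuous surjection and refining makes $\pi$ simplicial, proving (iv).
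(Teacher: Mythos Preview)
The paper does not give its own proof of this lemma: it is stated as ``well known in the literature'' with references to Mumford, Neumann--Eisenbud, Wagreich, and Wall, and the text after the statement merely adds a few remarks (Riemannian metric, gradient flow, plumbing reconstruction). So there is no argument in the paper to compare against at the level of details.

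Your sketch is a reasonable outline of the standard plumbing construction that those references carry out, and in spirit it matches what the cited sources do: tubular neighborhoods of the components, normalized product models at the nodes, a distance-like function assembled from fibrewise norms, and a retraction onto $\mc C$ built from the local projections. A couple of points deserve more care if you want this to stand on its own. First, the interpolation between the exponential tubular neighborhood and the bidisc model via a partition of unity does not automatically preserve the disk-bundle structure; one usually argues instead that both are smooth disk fibrations over $\mc C_i$ agreeing on the base, hence isotopic, and replaces one by the other rather than ``averaging''. Second, calling $\pi$ \emph{simplicial} requires more than producing a continuous map with connected fibres: you need compatible triangulations of $W$ and $\mc C$ for which $\pi$ sends simplices to simplices. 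The paper (and the cited literature) really only uses that $\pi$ is a continuous retraction with connected fibres extending the $\pi_i$, which is what your ``cross'' model gives; the genuinely simplicial statement is obtained a posteriori by triangulating along the level sets and is somewhat laborious. If you intend a self-contained proof, it would be cleaner to cite one of the references for this last step rather than leave it as ``pulling back a compatible triangulation''.
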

Furthermore, we can endow $W$ with a riemannian metric so that the flow of the gradient vector field of $h$ preserves the level hypersurfaces $h=\varepsilon$. In particular, all the neighborhoods $h^{-1}([0,\varepsilon))$ are homeomorphic. Moreover, we can topologically recover $W$ by making the plumbing procedure described in \cite{Mumford,Neumann-Eisenbud} of the fibrations $\pi_{i}:W_{i}\to\mc C_{i}$ obtained from the data given by the dual graph with weights $\mc G$.

\begin{obs}\label{obs14}
We point out some considerations.
\begin{enumerate}[(a)]
\item If additionally the intersection matrix $(\mc C_{i}\cdot \mc C_{j})_{i,j}$ is definite negative then, after Grauert's theorem, there exists a complex structure on the plumbing $W$ such that the quotient $W/\mc C$ becomes a complex surface with an isolated singularity.
\item
The existence of the simplicial map $\pi:W\to\mc C$ having connected fibres implies the existence of a epimorphism
$$\pi_{1}(\partial\overline{W})\to\pi_{1}(\mc C)\cong\pi_{1}(\mc G)\ast\pi_{1}(\mc C_{1})\ast\cdots\ast\pi_{1}(\mc C_{n}),$$
where $\mc G$ is the dual graph associated to $(W,\mc C)$ and $\mc C_{1},\ldots,\mc C_{n}$ are the irreducible components of $\mc C$, cf. \cite{Wagreich}.
\item
We can assume that the fibrations $\pi_{i}:W_{i}\to \mc C_{i}$ are holomorphic in a neighborhood of $\mr{Sing}(\mc C)\cap\mc C_{i}$. Moreover, if $\mc C_{i}$ is a dicritical component of $(\F,\mc C)$ then we can assume that the fibers of $\pi _i$ are the leaves of the restriction $\F_{|W_{i}}$.
\end{enumerate}
\end{obs}

\subsection{Boundary assembly} Let $V$ be a smooth manifold endowed with a regular foliation $\F$ of class $C^{1}$ and let $A$ be an arbitrary subset of $V$. By definition, a leaf of $\F_{|A}$ is a connected component of $L\cap A$, where $L$ is a leaf of $\F$. For every $A\subset V$ we define the \textbf{boundary} of $A$ as $\partial A:=A\setminus\inte{A}$, where $\inte{A}$ is the interior of $A$. The definitions and results of this section are borrowed from \cite{MM}.

\begin{defin}
If $A\subset B\subset V$ we will say that $A$ is \textbf{$1$-$\F$-connected} in $B$ (denoted by $A\loar B$) if for every leaf $L$ of $\F_{|B}$ and for all paths $a:[0,1]\to A$ and $b:[0,1]\to L$ with the same endpoints $m_0$, $m_1$, which are homotopic (with fixed endpoints) in $B$, there exists a path $c:[0,1]\to A\cap L$ with  endpoints $m_0$, $m_1$, which is homotopic to $a$ inside $A$ and to $b$ inside $L$.
\end{defin}

\begin{defin}\label{block}
Let $(V_{i})_{i\in I}$ a finite or numerable collection of submanifolds (with boundary) of $V$ of the same dimension that $V$. We will say that $V_{i}$ is a \textbf{$\F$-adapted block} if it satisfy the following properties:
\begin{enumerate}[(B1)]
\item $\partial V_{i}$ is incompressible in $V_{i}$,
\item $\partial V_{i}$ is a transversely orientable submanifold of $V$ transverse to $\F$,
\item $\partial V_{i}$ is $1$-$\F$-connected in $V_{i}$,
\item every leaf of $\F_{|V_{i}}$ is incompressible in $V_{i}$.
\end{enumerate}
We will say that $V$ is a \textbf{boundary assembly} of the blocks $V_{j}$ if for all $i\in I$ Condition (B1) and the following property hold:
\begin{enumerate}[(B5)]
\item for all different $i,j\in I$ either $V_{i}\cap V_{j}=\emptyset$ or $V_{i}\cap V_{j}$ is a connected component or $\partial V_{i}$ and a connected component of $\partial V_{j}$.
\end{enumerate}
We will say that $V$ is a \textbf{foliated boundary assembly} if each block $V_{i}$ is $\F$-adapted and if $V$ is a boundary assembly of $V_{j}$.
\end{defin}

\begin{teo}[Localisation]\label{loc}
If $V$ is a foliated boundary assembly  of $V_{i}$ then every leaf of $\F$ is incompressible in $V$ and
 for every $I'\subset I$, the union $V'=\bigcup\limits_{i\in I'}V_{i}$ is incompressible and $1$-$\F$-connected  in $V$.
\end{teo}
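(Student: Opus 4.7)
The strategy is to combine a Seifert--van Kampen/graph-of-spaces analysis of $V$ with a transverse, block-by-block reduction of loops on a leaf, using property (B3) in a crucial way. First, by (B5) the cover $\{V_i\}_{i\in I}$ together with the intersections $V_i\cap V_j$ realizes $V$ as a graph of spaces whose vertex spaces are the $V_i$ and whose edge spaces are the connected boundary components of overlap. The incompressibility (B1) of every $\partial V_i$ in $V_i$ forces the edge-to-vertex maps of fundamental groups to be injective, so the classical Bass--Serre/van Kampen theorem gives an identification $\pi_1(V)\cong\pi_1(\mc G,\{\pi_1(V_i)\},\{\pi_1(V_i\cap V_j)\})$ with each $\pi_1(V_i)\hookrightarrow\pi_1(V)$. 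For any $I'\subset I$ the same analysis applied to the subgraph supported on $I'$ exhibits $V'=\bigcup_{i\in I'}V_i$ as a graph-of-spaces assembly in its own right, and the induced morphism $\pi_1(V')\to\pi_1(V)$ corresponds to an inclusion of graphs of groups with injective edge maps; this is well known to be injective and gives incompressibility of $V'$ in $V$.

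To obtain incompressibility of a leaf $L$ of $\F$, I pass to the universal cover $q:\wt V\to V$. By the graph-of-spaces structure above, $\wt V$ is itself assembled from copies of the universal covers $\wt V_i$ glued along lifts of the common boundaries, and the dual graph of this assembly is a tree. Fix a connected component $\wh L$ of $q^{-1}(L)$. Transversality (B2) guarantees that each intersection $\wh L\cap \wt V_i$ is a disjoint union of leaves of the lifted foliation $\wt \F_i$ on $\wt V_i$; by (B4) each of these is simply connected, since leaves of $\F_{|V_i}$ are incompressible in $V_i$. Thus $\wh L$ is itself an assembly of simply connected pieces glued along its trace on boundaries. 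The sub-dual graph of $\wh L$ sits inside the dual tree of $\wt V$, hence is itself a tree; so $\wh L$ is simply connected, and the projection $q_{|\wh L}:\wh L\to L$ is a covering whose injectivity on $\pi_1$ is exactly the incompressibility of $L$ in $V$.

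For the $1$-$\F$-connectedness of $V'$ in $V$, let $L$ be a leaf of $\F_{|V}$ and let $a:[0,1]\to V'$ and $b:[0,1]\to L$ share endpoints and be homotopic in $V$. Lift the homotopy to $\wt V$ so that $\wt a$ and $\wt b$ end at the same two points of a fixed lift $\wh L$. Since $\wt a$ lies in a subtree of blocks corresponding to $V'$, and $\wh L$ is a tree-like assembly of simply connected leaf pieces in the $\wt V_i$, one can push $\wt b$ across block boundaries one at a time: at each crossing, (B3) applied inside the relevant block $V_i$ replaces the piece of $b$ that spans across a boundary by a path lying on $\partial V_i\cap L$, which is contained in $V'\cap L$ whenever the adjacent block lies in $V'$. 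An induction on the number of blocks traversed by $\wt b$ outside $V'$, combined with the leaf incompressibility of Step~2 to match homotopy classes, produces a path in $V'\cap L$ homotopic to $a$ in $V'$ and to $b$ in $L$.

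The main technical obstacle is the inductive boundary-crossing argument in Step~2/Step~3: one must be sure that the dual graph of $\wh L$ has no cycles, i.e.\ that a leaf cannot re-enter the same lift of a block through two different boundary components and produce a nontrivial loop. This is where (B3) is indispensable: any such potential cycle can be localized to a single block, where (B3) converts the homotopy in $V_i$ between a boundary path and a leaf path into an actual path in $\partial V_i\cap L$, collapsing the would-be cycle. Packaging this reduction cleanly—so that it applies uniformly to loops in leaves and to mixed paths $a,b$ for the $1$-$\F$-connectedness statement—is the delicate part of the argument; the rest is standard graph-of-spaces bookkeeping.
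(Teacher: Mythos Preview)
The paper does not actually prove Theorem~\ref{loc}: the sentence preceding the definitions in Section~2.2 says that ``The definitions and results of this section are borrowed from \cite{MM}'', and the Localisation Theorem is simply stated and then used. So there is no proof in this paper to compare your proposal against; the authors defer entirely to their earlier paper \cite{MM}.

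As for your proposal on its own merits, the global architecture (Bass--Serre/graph-of-groups for the incompressibility of $V'$, then passage to the universal cover and a tree-of-simply-connected-pieces argument for leaf incompressibility) is the right one and is essentially how such results are proved. There is, however, a genuine imprecision in your Step~2. You write that ``the sub-dual graph of $\wh L$ sits inside the dual tree of $\wt V$, hence is itself a tree''. This is not literally true: a single lifted block $\wt V_{i,\alpha}$ may meet $\wh L$ in several connected leaf pieces, so the dual graph of $\wh L$ does not embed into the Bass--Serre tree of $\wt V$; it only \emph{maps} to it, possibly collapsing many vertices to one. What (B3) actually buys you---and what your final paragraph gestures at without quite saying---is that, in the universal cover, every simply connected leaf piece inside a lifted block meets each connected component of the boundary of that block in a \emph{connected} set. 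This makes the natural map from the dual graph of $\wh L$ to the Bass--Serre tree an \emph{immersion} (injective on stars of vertices), and an immersion of a connected graph into a tree forces the domain to be a tree. Once this is made precise, your argument for leaf incompressibility is complete. Your sketch of the $1$-$\F$-connectedness of $V'$ is plausible but would need a more careful inductive statement to be convincing.
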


\begin{obs}\label{pi1assembly}
If $V=\bigcup\limits_{i\in I} V_{i}$ and each block $V_{i}$ satisfy Condition (B5) in previous  Definition~\ref{block}, then we define its dual graph $\mc G_{V}$
by putting one vertex for each element of $I$ and one edge between vertex $i$ and $j$ for each common boundary component of $V_{i}$ and $V_{j}$.
We can give an explicit presentation of the fundamental group of $V$ uniquely from $\pi_{1}(\mc G_{V})$ and the morphisms $\pi_{1}(V_{i}\cap V_{j})\to\pi_{1}(V_{i})$ thanks to the following generalization of the classical Seifert-Van Kampen theorem ($r=0$).
\end{obs}

\begin{prop}
Let $A$ be a connected simplicial complex with connected sub-complex $A_{0}$ and $A_{1}$ such that $A=A_{0}\cup A_{1}$ and $A_{0}\cap A_{1}=B_{0}\sqcup \cdots \sqcup B_{r}$, where each $B_{i}$ is a connected sub-complex of $A_{j}$ for each $i=0,\ldots,r$ and $j=0,1$. Let $\varphi_{ij}:\pi_{1}(B_{i})\to \pi_{1}(A_{j})$ be the morphisms induces by the natural inclusions $B_{i}\subset A_{j}$. Then $\pi_{1}(A)$ is isomorphic to the quotient
$$(\pi_{1}(A_{0})\ast\pi_{1}(A_{1})\ast \mb Z(u_{0})\ast\cdots\ast \mb Z(u_{r}))/K\,,$$
where $K$ is the normal subgroup generated by the relations $u_{0}=1$ and
$$\varphi_{i,0}(b_{i})=u_{i}^{-1}\varphi_{i,1}(b_{i})u_{i},\quad\forall b_{i}\in\pi_{1}(B_{i}),\quad i=0,\ldots,r.$$
\end{prop}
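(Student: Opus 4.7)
The plan is to reduce to the classical Seifert-Van Kampen theorem (the connected intersection case) by attaching auxiliary arcs so that the intersection of the two pieces becomes connected, then to simplify the resulting presentation via Tietze transformations. I would choose basepoints $\ast_i \in B_i$ for $i = 0, 1, \ldots, r$, together with paths $\alpha_i \subset A_0$ and $\beta_i \subset A_1$ from $\ast_0$ to $\ast_i$ (both constant when $i = 0$). I then form the enlarged complex $A^+$ obtained from $A$ by attaching, for each $i \geq 1$, an external $1$-cell $e_i$ with endpoints $\ast_0$ and $\ast_i$, and I set $A_j^+ := A_j \cup e_1 \cup \cdots \cup e_r$ for $j = 0, 1$. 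The intersection
$$A_0^+ \cap A_1^+ \;=\; (B_0 \sqcup \cdots \sqcup B_r) \cup (e_1 \cup \cdots \cup e_r)$$
is now connected, the arcs $\{e_i\}_{i \geq 1}$ forming a tree joining each $B_i$ to $B_0$ through $\ast_0$.

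Next, I would apply the classical Seifert-Van Kampen theorem to $A^+ = A_0^+ \cup A_1^+$. Writing $\tilde e_i^0 := e_i \cdot \alpha_i^{-1}$ and $\tilde e_i^1 := e_i \cdot \beta_i^{-1}$ for the loops at $\ast_0$ obtained by returning through $A_0$ and $A_1$ respectively, one gets $\pi_1(A_j^+, \ast_0) \cong \pi_1(A_j, \ast_0) \ast F(\tilde e_1^j, \ldots, \tilde e_r^j)$, while $\pi_1(A_0^+ \cap A_1^+, \ast_0)$ is the free product of $\pi_1(B_0, \ast_0)$ and the conjugates $e_i\,\pi_1(B_i, \ast_i)\,e_i^{-1}$ for $i \geq 1$. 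Since each loop $e_i b e_i^{-1}$ is identified with $\tilde e_i^j\,\varphi_{i,j}(b)\,(\tilde e_i^j)^{-1}$ inside $\pi_1(A_j^+, \ast_0)$, the amalgamation relations read
$$\varphi_{0,0}(b) = \varphi_{0,1}(b),\qquad \tilde e_i^0\,\varphi_{i,0}(b)\,(\tilde e_i^0)^{-1} = \tilde e_i^1\,\varphi_{i,1}(b)\,(\tilde e_i^1)^{-1} \quad (i \geq 1).$$

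Finally, I would descend back to $A$. Since $A$ is path-connected, attaching each arc $e_i$ amounts up to homotopy to wedging a circle; hence $\pi_1(A^+) \cong \pi_1(A) \ast F_r$ with the $\tilde e_i^0$ freely generating the complementary factor, and the retraction $r : A^+ \to A$ equal to the identity on $A$ and sending each $e_i$ onto $\alpha_i$ identifies $\pi_1(A)$ with the quotient of $\pi_1(A^+)$ by the normal closure of $\{\tilde e_i^0\}_{i \geq 1}$. Setting $u_i := (\tilde e_i^1)^{-1}$ for $i \geq 1$ and $u_0 := 1$, and performing the Tietze move $\tilde e_i^0 = 1$ in the presentation of the previous step, converts the amalgamation relations into $\varphi_{i,0}(b) = u_i^{-1}\,\varphi_{i,1}(b)\,u_i$ for all $0 \leq i \leq r$ and all $b \in \pi_1(B_i)$, which is exactly the presentation in the proposition. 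The only delicate point is the identification of the free factor $F_r$ in $\pi_1(A^+)$ with the subgroup generated by the $\tilde e_i^0$, and this follows from an iterated application of classical Seifert-Van Kampen to the successive attachments of the $e_i$ to the already path-connected $A$; everything else in the argument amounts to standard bookkeeping.
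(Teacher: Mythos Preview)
Your argument is correct. The arc-attachment trick to force the intersection to be connected, followed by classical Seifert--Van Kampen and then Tietze moves to kill the extra free factor, is a standard and clean way to establish this presentation; your bookkeeping with the loops $\tilde e_i^{\,0}$ and $\tilde e_i^{\,1}$ and the substitution $u_i=(\tilde e_i^{\,1})^{-1}$ is accurate.

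By way of comparison, the paper does not actually give a proof: it refers to Wagreich's argument for the case $r=1$ and asserts that the general case is analogous. Your write-up is therefore considerably more detailed and self-contained than what the paper provides. Whether Wagreich's original argument proceeds via the same arc-attachment reduction or via a direct covering-space / graph-of-groups computation I cannot say from the paper alone, but your route is in any case a perfectly good one and yields exactly the stated presentation.

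One small remark on exposition: the morphisms $\varphi_{i,j}$ in the statement implicitly require a choice of path in $A_j$ from the basepoint to a point of $B_i$ (since $B_i$ need not contain the basepoint of $A_j$); you make this explicit with $\alpha_i$ and $\beta_i$, which is the right thing to do, but it would be worth saying in one line that the $\varphi_{i,j}$ you obtain are the ones corresponding to these particular path choices, and that a different choice only changes the $u_i$ by elements of $\pi_1(A_0)$ and $\pi_1(A_1)$, hence gives an isomorphic presentation.
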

\begin{dem}
See the proof of Proposition~2.1. of \cite{Wagreich} for the case $r=1$. The case $r>1$ is completely analogous.
\end{dem}

\subsection{Decomposition of $\mc D$ and boundary assembly of Milnor tubes}\label{milnor}
We consider the function $h:W\to\R^{+}$ given by Lemma~\ref{lema1} with $h^{-1}(0)=\mc C$. If $f:W\to\C$ is a reduced holomorphic equation of $\overline{\mc D\setminus\mc C}$ then we consider the product $H:=h\cdot |f|$ and we define the
\textbf{open $4$-Milnor tube} of \textbf{height} $\eta>0$ associated to $\mc D$ as $\mc T_{\eta}:=H^{-1}([0,\eta))$.  We also denote
$$\mc T_{\eta}^{*}:=\mc T_{\eta}\setminus\mc D=H^{-1}((0,\eta))$$
and we remark that if $\eta>0$ is small enough then the \textbf{closed $3$-Milnor tube} $\mc M_{\eta}$, defined as the adherence of $H^{-1}(\eta)$ in $\overline{W}$, is transverse to $\partial\overline{W}$. The set of open $4$-Milnor tubes associated to $\mc D$ is a fundamental system of neighborhoods of $\mc D\subset W$. In \cite{Wall} it is shown that there exists a vector field $\xi$ such that $\xi(H)>0$, by
gluing suitable local models with a partition of the unity. The flow of $\xi$ allows to define homeomorphisms between the open $4$-Milnor tubes of different height, provided they are small enough.

For each irreducible component $D$ of $\mc D$ we also consider the disk fibrations $\pi_{D}:W_{D}\to D$ given by Lemma~\ref{lema1} if $D\subset \mc C$ and trivial ones if $D\subset\overline{\mc D\setminus \mc C}$. After Point (c) of Remark~\ref{obs14} we can choose the tubular neighborhoods $W_{D}$ and the fibrations $\pi_{D}$ in such a way that for each singular point $s\in\mr{Sing}(\mc D)$ the following properties hold:
\begin{enumerate}[(a)]
\item If $\{s\}=D\cap D'$ then $W_{s}:=W_{D}\cap W_{D'}$ admits holomorphic local coordinates $(x_{s},y_{s}):W_{s}\stackrel{\sim}{\to}\mb D_{2}\times\mb D_{2}$ such that the germ of $\F$ at $s$ is given by a $1$-form of the following type:
\begin{itemize}
\item $x_{s}dy_{x}-\lambda_{s}y_{s}dx_{s}$ with $\lambda_{s}\in\C$, if $s$ is a linearizable singularity;
\item $x_{s}dy_{s}-(\lambda_{s}y_{s}+x_{s}y_{s}(\cdots))dx_{s}$ with $\lambda_{s}\in\mb Q_{<0}$, if $s$ is a resonant singularity;
\item $dx_{s}$ (resp. $dy_{s}$) if $D$ (resp. $D'$) is a dicritical component of $(\F,\mc D)$.
\end{itemize}
\item $D\cap W_{s}=\{y_{s}=0\}$, $D'\cap W_{s}=\{x_{s}=0\}$ and the restrictions of $\pi_{D}$ and $\pi_{D'}$ to $W_{s}\cap\{|x_{s}|<\frac{3}{2}\}$ and $W_{s}\cap\{|y_{s}|<\frac{3}{2}\}$ coincide with $(x_{s},y_{s})\mapsto x_{s}$ and $(x_{s},y_{s})\mapsto y_{s}$ respectively.
\end{enumerate}
For each irreducible component $D$ of $\mc D$ we denote $\Sigma_{D}:=\mr{Sing}(\mc D)\cap D$ and
\begin{equation}\label{Ds}
D_{s}:=D\cap\{|x_{s}|\le 1,\,|y_{s}|\le 1\}\quad \hbox{for}\quad s\in \Sigma_{D}\,.
\end{equation}
For each irreducible $\F$-invariant component $D$ of $\mc D$ of genus $g(D)>0$ we fix a smooth real analytic curve $\Gamma_{D}$ which is the boundary of a closed conformal disk $D_{\Gamma_{D}}$ containing $\Sigma_{D}$ such that the holonomy of $\Gamma_{D}$ is linearizable, provided that $D$ is not an initial component, see the introduction.
Notice that $\Sigma_{D}\neq\emptyset$ because of Condition (e) in Definition~\ref{adapted}.
If  $D$ contains a unique singular point $s$ of $\mc D$ then we shall take $\Gamma_{D}=\partial D_{s}$ and $D_{\Gamma_{D}}=D_{s}$. Otherwise we can assume that every two closed disks ${D}_{s}$ and ${D}_{s'}$, $s,s'\in\Sigma_{D}$, are disjoint and contained in the open disk ${D}_{\Gamma_{D}}\setminus\partial D_{\Gamma_{D}}$ when $g(D)>0$.
We also denote
\begin{equation}\label{g=0}
D^{*}:=\overline{D\setminus\bigcup\limits_{s\in\Sigma_{D}}D_{s}}, \quad \text{if}\quad g(D)=0
\end{equation}
and
\begin{equation}\label{g>0}
D^{*}:=\overline{D_{\Gamma_{D}}\setminus\bigcup\limits_{s\in\Sigma_{D}}D_{s}}, \quad \text{if}\quad g(D)>0.
\end{equation}
Consider the union $\mf J\subset\mc D$ of all the Jordan curves of the form $\Gamma_{D}$ with $g(D)>0$ and all the curves $\partial D_{s}$ with $s\in D\cap \mr{Sing}(\mc D)$. Let $\mf A$ be the set of \textbf{elementary blocks of $\mc D$} defined as the adherence of the connected components of $\mc D\setminus\mf J$. There exists an \textbf{uniformity height} $\eta_{1}>0$ such that for all $\eta\in (0,\eta_{1}]$ the set $\{\mc T_{\eta}(A)\}_{A}$ composed by the adherence of the connected components of
$$\mc T_{\eta}\setminus\bigcup\limits_{D\subset\mc D}\pi_{D}^{-1}(\mf J\cap D)$$
is in one to one correspondence with $\mf A$. More precisely, for each $A\in\mf A$ there is a unique connected component of $\mc T_{\eta}\setminus\bigcup\limits_{D\subset\mc D}\pi_{D}^{-1}(\mf J\cap D)$ containing $A\subset\mc D$ and whose adherence we denote by $\mc T_{\eta}(A)$.
Notice that for each elementary block $A\subset \mc D$ we can construct a vector field $\xi_{A}$ whose flow induces deformation retracts between $(\mc T_{\eta}^{*}(A),\partial\mc T_{\eta}^{*}(A))$ and  $(\mc T_{\eta_{1}}^{*}(A),\partial\mc T_{\eta_{1}}^{*}(A))$ for all $\eta\in(0,\eta_{1}]$, see Theorem~5.1.5 and Proposition~9.3.2 of \cite{Wall}.
If $B=\cup_{i}A_{i}\subset \mc D$ is an arbitrary union of elementary blocks of $\mc D$
we also adopt the following convenient notation
\begin{equation}\label{TdeB}
\mc T_{\eta}(B):=\bigcup_{i}\mc T_{\eta}(A_{i})\quad \hbox{and} \quad\mc T_{\eta}^{*}(B):=\mc T_{\eta}(B)\setminus \mc D\,.
\end{equation}

\begin{defin}
We will say that an inclusion $\imath:A\subset B$ between two subspaces of a topological space is \textbf{rigid} if $\imath_{*}:\pi_{1}(A,p)\stackrel{\sim}{\to}\pi_{1}(B,p)$ is an isomorphism for all $p\in A$. We will say that $\imath$ is $\partial$-\textbf{rigid} if $\partial A\subset\partial B$ and the two inclusions $A\subset B$ and $\partial A\subset\partial B$ are rigid. Recall that $\partial A=A\setminus\inte{A}$.
\end{defin}

\begin{prop}\label{rig2}
Consider a subset $\mc B\subset \mc T_{\eta_{1}}^{*}$. If for each elementary block $A$ of $\mc D$ the inclusion $\mc B\cap\mc T_{\eta_{1}}^{*}(A)\subset\mc T_{\eta_{1}}^{*}(A)$ is $\partial$-rigid, then the inclusion $\mc B\subset \mc T_{\eta_{1}}^{*}$  is also rigid. In particular the inclusion $\mc T_{\eta}^{*}\subset\mc T_{\eta_{1}}^{*}$ is rigid for all $\eta\in(0,\eta_{1}]$.
\end{prop}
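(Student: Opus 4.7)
The plan is to induct on the number of elementary blocks of $\mc D$, invoking at each step the generalized Seifert--Van Kampen presentation given in the previous proposition. Enumerate the elementary blocks of $\mc D$ as $A_1,\ldots,A_N$ in such a way that for every $k\ge 2$ the partial union $B_{k-1}:=A_1\cup\cdots\cup A_{k-1}$ meets $A_k$ along a non-empty finite union of Jordan curves of $\mf J$; such an ordering exists because the dual graph of the elementary-block decomposition of $\mc D$ is connected. Using the notation~(\ref{TdeB}) I set $V_k:=\mc T_{\eta_{1}}^{*}(B_k)$ and $W_k:=\mc T_{\eta_{1}}^{*}(A_k)$, so that $V_k=V_{k-1}\cup W_k$ and $V_{k-1}\cap W_k$ is a finite disjoint union of connected pieces of the form $\pi_D^{-1}(\Gamma)\cap\mc T_{\eta_{1}}^{*}$ with $\Gamma\in\mf J$, each of which is by construction a full connected component of both $\partial V_{k-1}$ and $\partial W_k$.

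I would prove by induction on $k$ the stronger statement that $\mc B\cap V_k\subset V_k$ is $\partial$-rigid, the base case $k=1$ being exactly the hypothesis of the proposition. For the step from $k-1$ to $k$ I apply the Van Kampen proposition to the two pairs $(V_{k-1},W_k)$ and $(\mc B\cap V_{k-1},\mc B\cap W_k)$, with common intersection decomposed into the same finite collection of boundary components. The hypothesis gives that $\mc B\cap W_k\subset W_k$ is $\partial$-rigid, while the induction supplies the analogous property for $V_{k-1}$; restricting the boundary part to the components lying in $V_{k-1}\cap W_k$ yields a $\pi_1$-isomorphism on each gluing piece. The two amalgamated presentations are therefore built from canonically isomorphic data connected by inclusion-induced morphisms, so the inclusion $\mc B\cap V_k\subset V_k$ is a $\pi_1$-iso. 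Applying the same argument to the boundaries, which glue along the same pattern minus the now-internal components, closes the induction on the $\partial$-part as well. Taking $k=N$ and noting that $\partial \mc T_{\eta_{1}}^{*}=\emptyset$ gives the rigidity claimed in the first assertion.

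The main obstacle I expect is the bookkeeping needed to guarantee that each connected component of $V_{k-1}\cap W_k$ is simultaneously a full boundary component of both $V_{k-1}$ and $W_k$ (so that the hypothesized $\partial$-rigidity can be restricted to each of them and fed into Van Kampen), and that the corresponding components on the $\mc B$ side are precisely $\mc B\cap\pi_D^{-1}(\Gamma)\cap\mc T_{\eta_{1}}^{*}$. This is however built into the definition of the $\mc T_{\eta_{1}}^{*}(A)$ as closures of connected components of $\mc T_{\eta_{1}}^{*}\setminus\bigcup_D\pi_D^{-1}(\mf J\cap D)$, and in the hypothesis that $\mc B\cap\mc T_{\eta_{1}}^{*}(A)\subset\mc T_{\eta_{1}}^{*}(A)$ is $\partial$-rigid as a whole, which automatically restricts to each connected component of its boundary.

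For the \emph{in particular} statement, I would fix $\eta\in(0,\eta_{1}]$ and apply the first part with $\mc B:=\mc T_{\eta}^{*}$. On each elementary block $A$ the flow of the vector field $\xi_A$ recalled just before the proposition provides a deformation retract of the pair $(\mc T_{\eta_{1}}^{*}(A),\partial\mc T_{\eta_{1}}^{*}(A))$ onto $(\mc T_{\eta}^{*}(A),\partial\mc T_{\eta}^{*}(A))$, so the inclusion $\mc T_{\eta}^{*}(A)\subset \mc T_{\eta_{1}}^{*}(A)$ is $\partial$-rigid on each block and the hypothesis of the first part is satisfied.
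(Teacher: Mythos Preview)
Your proof is correct and follows essentially the same approach as the paper: the paper's proof is a one-liner invoking Remark~\ref{pi1assembly} (the Van Kampen-type presentation of $\pi_1$ of a boundary assembly from the block data and dual graph) together with the $2$-out-of-$3$ lemma for rigid inclusions, and your inductive gluing along the elementary blocks is precisely a spelled-out version of that argument. Your treatment of the ``in particular'' part via the deformation retract given by the flow of $\xi_A$ is also what the paper has in mind.
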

\begin{proof}
This assertion follow immediately  from Remark~\ref{pi1assembly} and the following (trivial) result.
\end{proof}

\begin{lema}\label{rigid}
Let $A\subset B\subset C$ be topological spaces. If two of the three inclusions $A\subset B$, $B\subset C$ and $A\subset C$ are ($\partial$-)rigid then the third one is also ($\partial$-)rigid.
\end{lema}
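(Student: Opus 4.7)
The plan is to reduce the statement to the $2$-out-of-$3$ property of isomorphisms in the category of groups, applied at the level of fundamental groups.

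Fix a base point $p\in A$ and denote the three inclusions by $\imath:A\hookrightarrow B$, $\jmath:B\hookrightarrow C$ and $\kappa=\jmath\circ\imath:A\hookrightarrow C$. Functoriality of $\pi_{1}$ gives $\kappa_{*}=\jmath_{*}\circ\imath_{*}$ as group homomorphisms
\[
\pi_{1}(A,p)\xrightarrow{\imath_{*}}\pi_{1}(B,p)\xrightarrow{\jmath_{*}}\pi_{1}(C,p).
\]
By definition, an inclusion is rigid iff the induced map on $\pi_{1}$ is an isomorphism for every choice of base point in the smaller space. Since in any category the property of being an isomorphism satisfies the $2$-out-of-$3$ principle (if $g$ and $g\circ f$ are isomorphisms then $f=g^{-1}\circ(g\circ f)$; if $f$ and $g\circ f$ are isomorphisms then $g=(g\circ f)\circ f^{-1}$; and if $f$ and $g$ are isomorphisms then so is $g\circ f$), the unrigid statement follows at once: whichever two of $\imath_{*},\jmath_{*},\kappa_{*}$ are assumed to be isomorphisms, the third one is automatically an isomorphism. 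Varying the base point (necessarily chosen in the smallest space whose inclusion we are checking) gives rigidity of the remaining inclusion.

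For the $\partial$-rigid version, I first have to confirm that the hypothesis provides a chain $\partial A\subset\partial B\subset\partial C$. Among the three possible pairs of hypotheses, in each case at least two of the three boundary inclusions $\partial A\subset\partial B$, $\partial B\subset\partial C$, $\partial A\subset\partial C$ are explicitly given, and the third then follows by composition. Once this boundary chain is in place, I apply the same $2$-out-of-$3$ argument twice in parallel: once to the inclusions of the ambient spaces, and once to the induced chain of boundary inclusions, concluding in both cases that the third map on $\pi_{1}$ is an isomorphism for every base point, hence that the remaining inclusion is $\partial$-rigid.

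The argument is essentially formal; the only thing to watch out for is the bookkeeping of base points (rigidity is a condition for every base point in the source, not just one) and making sure that in the $\partial$-rigid case the boundary inclusions really compose. Both are immediate from the definitions. I therefore expect no substantive obstacle, and the proof will take only a few lines.
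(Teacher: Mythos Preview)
The paper offers no proof beyond the word ``trivial'', and your $2$-out-of-$3$ argument on $\pi_1$ is precisely the intended content. However, your write-up papers over a genuine issue in one of the three directions. When the two given rigid inclusions are $A\subset B$ and $A\subset C$ and you must deduce that $B\subset C$ is rigid, the factorization $\kappa_*=\jmath_*\circ\imath_*$ is only available at base points $p\in A$, whereas rigidity of $B\subset C$ demands the isomorphism at \emph{every} $p\in B$. Without an assumption such as path-connectedness (or that every component of $B$ meets $A$) this fails: take $A=\{*\}$, $B=S^1\sqcup\{*\}$, $C=D^2\sqcup\{*\}$.

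The same direction is the problematic one in the $\partial$-rigid version, and there your sentence ``the third then follows by composition'' is simply wrong: from $\partial A\subset\partial B$ and $\partial A\subset\partial C$ you cannot infer $\partial B\subset\partial C$ (in $\mathbb R$ take $A=\{0\}$, $B=[0,1]$, $C=[0,2]$; all inclusions are rigid on $\pi_1$, yet $1\notin\partial C$). The other two directions are fine --- for instance, $\partial A\subset\partial C$ and $\partial B\subset\partial C$ do force $\partial A\subset\partial B$, since any $p\in\partial A\subset\partial C$ lies outside $\mathring C\supset\mathring B$ while $p\in A\subset B$. In the paper's applications the spaces are connected Milnor-tube pieces and the boundaries are nested by construction, so nothing goes wrong there; but as a standalone lemma about arbitrary topological spaces, you should either add these hypotheses or explicitly exclude the direction that concludes rigidity of $B\subset C$.
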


Notice that the collection $\{\mc T_{\eta}^{*}(A)\}_{A\in\mf A}$ does not define a boundary assembly of $\mc T_{\eta}^{*}$ because Condition (B1) in Definition~\ref{block} is not always verified. More precisely, if $C$ is an irreducible component of $\mc C$ having genus $0 $ and valence $1$ then the boundary of $\mc T_{\eta}^{*}(C)$ is not incompressible. This situation leads us to consider bigger blocks of $\mc D$ as we have already done in \cite{MM}.

\begin{defin}\label{defblocks}
The \textbf{fundamental blocks} of $\mc D$ are the  unions of elementary blocks of $\mc D$ described below:
\begin{enumerate}[(a)]
\item For each $\F$-invariant irreducible component $D$
of $\mc D$ not contained in a dead branch, we consider the \textbf{aggregate block} defined as
$$\mf m_{D}\cup D^{*}\cup \left(\bigcup_{s\in \mf m_D\cap D}{D}_s\right)\,,$$
where $\mf m_{D}$ is the union of all the dead branches meeting $D$,
$D^{*}$ is defined by Equations (\ref{g=0}) or (\ref{g>0}), and $D_s$ is given by (\ref{Ds}).
\item For each singularity $s\in\mr{Sing}(\mc D)$ belonging to different irreducible components $D$ and $D'$ of $\mc D$, we consider the \textbf{singularity block} ${D}_{s}\cup{D_{s}'}$ provided that $s$ do not belong to any dead branch.
\item For each $\F$-invariant irreducible component $D\subset \mc C$ of genus $g(D)>0$, we consider the \textbf{genus block} $\overline{D\setminus D_{\Gamma_{D}}}$.
\item For each dicritical irreducible component $D$ of $\F$, we consider the \textbf{dicritical block}
$$D\cup\bigcup\limits_{(s,D')\in\mf K_{D}}{D_{s}'},$$
where $\mf K_{D}$ is the set of pairs $(s,D')$ constituted by a singular point $s$ of $\mc D$ lying on $D$ and the irreducible component $D'\neq D$ of $\mc D$ meeting
$D$ at~$s$.
\end{enumerate}
An \textbf{initial block} of $\mc D$ is either an aggregate block containing a single singular point of $\mc D$ which do not belong to any dead branch, or a genus block associated to an initial component $D$ of $\mc D$ of genus $g(D)>0$ such that the holonomy of $\Gamma_{D}$ is not linearizable.
A \textbf{breaking block} of $\mc D$ is either a singularity block associated to a linearizable singular point or a dicritical block.
\end{defin}
\begin{prop}\label{BA}
For every $\eta\in(0,\eta_{1}]$, $\mc T_{\eta}^{*}$ is a boundary assembly of the blocks  $\{\mc T_{\eta}^{*}(B)\}_{B\in\mf B}$ defined by (\ref{TdeB}), where $\mf B$ is the set of fundamental blocks of $\mc D$.
\end{prop}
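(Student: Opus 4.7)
The plan is to verify the two defining conditions of a boundary assembly (Definition~\ref{block}), namely (B1) and (B5), for the collection $\{\mc T_{\eta}^{*}(B)\}_{B\in\mf B}$, together with the covering equality $\mc T_{\eta}^{*}=\bigcup_{B\in\mf B}\mc T_{\eta}^{*}(B)$. The covering equality is structural: by inspection of Definition~\ref{defblocks}, every elementary block of $\mc D$ is contained in exactly one fundamental block (where aggregate blocks absorb dead-branch tips, singularity blocks absorb the small bidisks around non-dead-branch singular points, and dicritical/genus blocks absorb the remaining elementary pieces), so by definition~(\ref{TdeB}) the corresponding Milnor tubes cover $\mc T_{\eta}^{*}$.

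For (B5), I would argue that for two distinct $B,B'\in\mf B$, any non-empty intersection $\mc T_{\eta}^{*}(B)\cap\mc T_{\eta}^{*}(B')$ occurs along boundary circles of $\mf J$ (either some $\partial D_{s}$ or some $\Gamma_{D}$) and is precisely the preimage under the plumbing fibration $\pi_{D}$ of such a circle. By construction of the fundamental blocks, such a preimage is simultaneously a whole connected component of $\partial\mc T_{\eta}^{*}(B)$ and of $\partial\mc T_{\eta}^{*}(B')$, so (B5) holds. The transversality of the boundary to $\F$ and its transverse orientability (useful for the blocks themselves in Proposition~\ref{BA}'s sequel but part of the check here) come from Point~(c) of Remark~\ref{obs14}, since each circle in $\mf J$ is either $\F$-transverse near a reduced singularity or a fiber of the dicritical fibration.

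The main obstacle is (B1): showing that $\partial\mc T_{\eta}^{*}(B)$ is incompressible in $\mc T_{\eta}^{*}(B)$ for each $B\in\mf B$. I would treat the cases separately. The singularity block (b) is a bidisk around a reduced non-dicritical singularity minus its invariant axes, whose fundamental group is $\mb Z^{2}$ generated by meridians of the two components, and the boundary tori (or annuli, in the resonant situation) inject by a direct computation. The dicritical block (d) retracts onto an $\mb S^{1}$-bundle over the dicritical component punctured at the attaching points, and its boundary components are obvious $\pi_{1}$-injective tori. The genus block (c) is a $\mb D^{*}$-bundle over an oriented surface with one boundary circle, and incompressibility of $\partial$ is classical. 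The aggregate block (a) is the delicate one: attaching dead branches to an invariant component $D$ could, a priori, introduce relations among boundary loops. The key point is that a dead-branch chain, viewed through its Milnor tube, produces a solid handlebody-like piece in which each boundary fiber is realized by a single meridian, and gluing such pieces to the Milnor tube of $D^{*}$ amounts to amalgamating along cyclic subgroups carried by meridians of the nodes at $\mf m_{D}\cap D$; these amalgamations preserve injectivity of the remaining boundary components at the clipping circles on $D$.

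To organize this cleanly, I would build, for each fundamental block $B$, an explicit presentation of $\pi_{1}(\mc T_{\eta}^{*}(B))$ via Proposition for graph-of-spaces-Seifert–Van Kampen (stated in Remark~\ref{pi1assembly}) applied to its own decomposition into elementary sub-blocks, then verify that each boundary-component inclusion hits a free or cyclic factor. This is the same strategy carried out in \cite{MM} for the non-dicritical local case; the modifications needed here are the treatment of dicritical blocks (which is straightforward because the fibers of $\pi_{D}$ for dicritical $D$ are leaves of $\F$) and the treatment of genus blocks with non-linearizable boundary holonomy (which only enters in the sequel of the proof, not in (B1) itself). Once (B1) is verified block-by-block and (B5) is in place, Proposition~\ref{BA} follows.
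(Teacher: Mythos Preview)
Your approach is essentially the paper's: (B5) holds by construction of the fundamental blocks, and (B1) is checked case by case, with the singularity, genus, and dicritical blocks handled by direct inspection of their (Seifert-fibered) fundamental groups and the aggregate block being the substantive case.

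Where your treatment of the aggregate case is thinner than the paper's is precisely at the step ``these amalgamations preserve injectivity of the remaining boundary components.'' The paper isolates two lemmas that do the real work here. Lemma~\ref{p2} shows that each dead branch $\mf m$ attached to $D$ contributes a relation $d_k^{p_k}=c^{q_k}$ with $\gcd(p_k,q_k)=1$ and, crucially, $p_k\ge 2$; the inequality uses the standing hypothesis that no $\F$-invariant dead branch can be blown down, and without it the argument would collapse. Lemma~\ref{unicity} then gives a unique normal form in the resulting Seifert-type group $\Gamma=\langle c,d_1,\ldots,d_m\mid [c,d_i]=1,\ d_i^{p_i}=c^{q_i}\rangle$, which is exactly what is needed to settle the extremal subcase $g=0$, $v-m=1$ (a single non-dead-branch boundary torus), where one must show $d_v^{\alpha}c^{\beta}=1\Rightarrow\alpha=\beta=0$ by rewriting $d_v$ via the relation $c^{\nu}\prod d_j=1$ and invoking the normal form. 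Your sketch does not single out this subcase or the $p_k\ge 2$ hypothesis, and that is the place where a reader could legitimately doubt the claim.

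Two small corrections. The boundary of a singularity block is always a pair of $2$-tori, not annuli: $\mc T_\eta^{*}(B)\cong T\times[0,1]$ with $T$ a torus, regardless of resonance. And transversality of $\partial\mc T_\eta^{*}(B)$ to $\F$ is condition (B2), which is part of the $\F$-adapted property established later (Theorem~\ref{blocks}), not of the boundary-assembly statement itself.
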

The proof of this proposition will be based on explicit descriptions of the fundamental groups of $\mc T_{\eta}^{*}(B)$, by generators and relations. But before we must  give some preliminary information about the topology of tubular neighborhoods of dead branches.
Let $\mf m=\bigcup\limits_{i=1}^{\ell} D_{i}$ be a $\F$-invariant dead branch with $v(D_{1})=1$, $v(D_{i})=2$ for $i=2,\ldots,\ell$. For each $j=1,\ldots,\ell$ the  intersection matrix
of $\bigcup\limits_{i=1}^{j}D_{i}$ is
$$A_{j}=\left(\begin{array}{ccccc} e_{1} & 1 & 0 &\cdots & 0\\ 1 & e_{2} & 1 &\ddots & \vdots\\ 0 & 1 & e_{3} & \ddots & 0\\ \vdots & \ddots & \ddots & \ddots & 1\\ 0 & \cdots & 0 & 1 & e_{j}\end{array} \right)$$
whose determinant is denoted $\delta_{j}=\det(A_{j})$.
Assume that
the attaching component $C$ of $\mf m$ is also $\F$-invariant according to Condition (c) in Definition~\ref{adapted} and that  $\mf m$ can not be blow-down according to the initial assumptions stated in the introduction. Let $d_{i}\in \pi_{1}(\mc T_{\eta}^{*}(\mf m))$ be a meridian of $D_{i}$ and let $c\in\pi_{1}(\mc T_{\eta}^{*}(\mf m))$ be a meridian of  $C$.
\begin{lema}\label{p2}
There exist coprime positive integers $p\ge 2$ and $q\ge 1$ such that
$d_{\ell}^{p}=c^{q}$.
\end{lema}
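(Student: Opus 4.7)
My plan is to read off $\pi_{1}(\mc T_\eta^*(\mf m))$ from the plumbing decomposition of Lemma~\ref{lema1} and Section~\ref{milnor}, and then to run a short linear-recurrence computation on the resulting presentation.

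First I will cut $\mc T_\eta(\mf m)$ along the separating tori $\pi_{D_i}^{-1}(\partial D_s)$ into its elementary blocks: a trivial $S^{1}$-bundle over a punctured sphere for the middle part of each $D_{i}$, and a punctured bidisk around each node of $\mc D$ inside $\mf m$, including the attaching node $s_\ell:=D_{\ell}\cap C$. Gluing them back with Van Kampen produces the classical plumbing presentation of $\pi_{1}(\mc T_\eta^*(\mf m))$ on meridians $d_{1},\dots,d_{\ell},c$ of $D_{1},\dots,D_{\ell},C$, subject to the commutation relations $[d_{i},d_{i+1}]=1$ and $[d_{\ell},c]=1$ coming from the singular points, and to one vertex relation at each component of $\mf m$:
$$ d_{1}^{e_{1}}\,d_{2}=1,\qquad d_{i}^{e_{i}}\,d_{i-1}\,d_{i+1}=1\ \ (2\le i\le \ell-1),\qquad d_{\ell}^{e_{\ell}}\,d_{\ell-1}\,c=1. $$
Crucially, no vertex relation is imposed at $C$, since only the small disk $C_{s_\ell}\subset C$ lies inside $\mc T_\eta(\mf m)$.

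Next I will solve these vertex relations recursively from the tip. By the initial reduction conventions of the introduction, each $D_{i}$ is rational, $\F$-invariant and of valence $\le 2$ in $\mc D$, so $e_{i}\le -2$. Writing $a_{i}:=-e_{i}\ge 2$ and $t:=d_{1}$, an induction on $i$ yields $d_{i}=t^{m_{i}}$ and $c=t^{m_{\ell+1}}$, where
$$ m_{0}=0,\qquad m_{1}=1,\qquad m_{i+1}=a_{i}\,m_{i}-m_{i-1}. $$
Because every generator is now a power of a single element, the commutation relations are automatic. The integers $m_{i}$ are exactly the Hirzebruch--Jung continuants associated to $(a_{1},\dots,a_{\ell})$, which coincide up to sign with the principal minors $\delta_{i-1}$ of $A_\ell$; a straightforward induction from $a_{i}\ge 2$ shows $m_{i+1}>m_{i}\ge 1$, and in particular $m_{\ell+1}\ge 2$.

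Finally, the Euclidean step
$$ \gcd(m_{i+1},m_{i})=\gcd(a_{i}m_{i}-m_{i-1},m_{i})=\gcd(m_{i},m_{i-1}) $$
iterates down to $\gcd(m_{\ell+1},m_{\ell})=\gcd(m_{1},m_{0})=1$. Setting $p:=m_{\ell+1}\ge 2$ and $q:=m_{\ell}\ge 1$, we obtain
$$ d_{\ell}^{\,p}=t^{\,m_{\ell}\,m_{\ell+1}}=c^{\,q}, $$
which is the desired identity.

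The main obstacle I anticipate is the first step: justifying the plumbing presentation in precisely this form. One must check that $\mc T_\eta^*(\mf m)$ deformation retracts onto the natural assembly of $S^{1}$-bundles and punctured bidisks, that the gluings along the separating tori introduce only the commutations and vertex relations listed above, and — most importantly — that no vertex relation at $C$ is produced, because only the disk $C_{s_\ell}$ of $C$ lies in the tube. Once this structural input is secured, the remainder of the argument is the classical Hirzebruch--Jung continued-fraction bookkeeping sketched above.
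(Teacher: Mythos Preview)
Your recursive computation with the continuants $m_i$ is exactly the mechanism the paper uses (there written in terms of the principal minors $\delta_i$ of $A_\ell$; one has $m_{i+1}=(-1)^i\delta_i$), and the gcd step and the final identity $d_\ell^{\,p}=c^{\,q}$ are both correct as you wrote them. The one genuine gap is the sentence ``By the initial reduction conventions of the introduction \dots\ so $e_i\le -2$.'' The convention in the introduction only forbids $\F$-invariant rational components of valence $\le 2$ with self-intersection $-1$; it does \emph{not} forbid $e_i\ge 0$. In the generality of the paper the components of $\mc C$ may well have nonnegative self-intersection, and if some $a_i=-e_i\le 1$ your monotonicity $m_{i+1}>m_i\ge 1$ fails, so neither $p\ge 2$ nor even $p,q>0$ would follow from your argument.

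What is missing is precisely the step the paper supplies with the foliation: since $C\cup\mf m$ is $\F$-invariant and, by hypothesis~(L), the singularities on the chain are reduced and not saddle-nodes, the Camacho-Sad indices $\lambda_i=\CS(\F,D_i,s_i)$ are rational and lie in $\C\setminus\mb Q_{\ge 0}$. From $v(D_1)=1$ one gets $\lambda_1=e_1\in\Z$, hence $e_1<0$, hence $e_1\le -2$ by the no-$(-1)$-curve convention; then $\lambda_2=e_2-1/\lambda_1\in\mb Q$ forces $\lambda_2<0$ and $e_2<0$, and so on. Once you insert this inductive Camacho-Sad step to obtain $e_i\le -2$ for all $i$, your continuant argument goes through verbatim and is essentially a restatement of the paper's proof in $\pi_1$ rather than in $H_1$.
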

\begin{dem2}{of Lemma \ref{p2}} By assumption $C\cup\mf m$ is $\F$-invariant and the singularities of $\F$ are reduced and they are not saddle-nodes. Then classically the Camacho-Sad indices
$$\lambda_{i}=\CS(\F,D_{i},s_{i})\,,\qquad i=1,\ldots,\ell$$
are rational strictly negative numbers.
By Camacho-Sad formula follows that $\lambda_{i}=e_{i}-\frac{1}{\lambda_{i-1}}$.
On the other hand, by developing the determinant of $A_{i+1}$ by the last row, we have the equality $\delta_{i+1}=e_{i+1}\delta_{i}-\delta_{i-1}$.
We claim that $\lambda_{i}=\frac{\delta_{i}}{\delta_{i-1}}$, for $i=2,\ldots,\ell$. Indeed, this is trivially the case for $i=2$ and the inductive step $i\Rightarrow i+1$:
$$\lambda_{i+1}=e_{i+1}-\frac{1}{\lambda_{i}}=e_{i+1}-\frac{\delta_{i-1}}{\delta_{i}}=\frac{e_{i+1}\delta_{i}-\delta_{i-1}}{\delta_{i}}=\frac{\delta_{i+1}}{\delta_{i}}$$
completes the proof of the claim. Since $\lambda_{i}<0$ for all $i=1,\ldots,\ell$ and $\delta_{1}=e_{1}<0$, it follows that $(-1)^{i}\delta_{i}>0$ and, by Silvester's criterion, the matrix $A_{\ell}$ is definite negative.
We take $p=(-1)^{\ell}\delta_{\ell}\ge 1$ and $q=(-1)^{\ell-1}\delta_{\ell-1}\ge 1$.
By Grauert's criterion, $\mf m$ can be blow-down if and only if $A_{\ell}$ is definite negative and $\delta_{\ell}=\pm 1$. Hence $p\ge 2$ by the assumption on $\mf m$. Moreover we have
$$\gcd(p,q)=\gcd(\delta_{\ell},\delta_{\ell-1})=
\gcd(\delta_{\ell-1},\delta_{\ell-2})=\cdots=\gcd(\delta_{2},
\delta_{1})=\gcd(e_{1},-1).$$
Hence $\gcd(p,q)=1$.
It only remains to prove that $d_{\ell}^{p}=c^{q}$.
This equality follows directly from the fact that $(A_{\ell}^{-1})_{\ell\ell}=\frac{\delta_{\ell-1}}{\delta_{\ell}}$ and from the relation $A_{\ell}v+w=0$
in $H_{1}(\mc T_{\eta}^{*}(\mf m),\mb Z)$, where $v=([d_{1}],\cdots,[d_{\ell}])^{t}$ and $w=(0,\ldots,0,[c])^{t}$.  This relation being a matrix reformulation of the Camacho-sad index formulae along the components of $\mf m$.
\end{dem2}

The following result is well-known in combinatorial group theory.

\begin{lema}\label{unicity} If $p_{i}\ge 2$ and $q_{i}\ge 1$ are coprime integers then
every element $\gamma$ of the group $\Gamma$ presented by $$\langle c,d_{1},\ldots,d_{m}\,|\,[c,d_{i}]=1,\, d_{i}^{p_{i}}=c^{q_{i}},i=1,\ldots,m\rangle$$ can be written in a unique way as  $\gamma=u_{1}\cdots u_{r}c^{s}$ with $u_{i}=d_{j_{i}}^{\varepsilon_{i}}$, $0\le\varepsilon_{i}<p_{i}$ and $s\in\mb Z$.
\end{lema}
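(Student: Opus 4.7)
The plan is to identify $\Gamma$ as the iterated amalgamated free product of the two-generator subgroups $H_i := \langle c, d_i\rangle$ along the common infinite-cyclic subgroup $\langle c\rangle$, and then invoke the classical normal form theorem for such products.

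First I would analyze each factor $H_i = \langle c, d_i \mid [c,d_i]=1,\ d_i^{p_i}=c^{q_i}\rangle$. This is the quotient of $\mathbb{Z}^2$ by the cyclic subgroup generated by the vector $(-q_i, p_i)$; since $\gcd(p_i, q_i) = 1$ this vector is primitive, so $H_i \cong \mathbb{Z}$. Explicitly, B\'ezout's identity yields integers $a_i, b_i$ with $a_i p_i + b_i q_i = 1$, and setting $t_i := c^{a_i} d_i^{b_i}$ one checks that $t_i^{p_i} = c$ and $t_i^{q_i} = d_i$. Consequently $\langle c\rangle = \langle t_i^{p_i}\rangle$ has index $p_i \ge 2$ in $H_i$, and $d_i$ (congruent to $t_i^{q_i}$ modulo $\langle c\rangle$) generates $H_i / \langle c\rangle \cong \mathbb{Z}/p_i\mathbb{Z}$; hence $\{d_i^{\varepsilon} : 0 \le \varepsilon < p_i\}$ is a canonical system of coset representatives of $\langle c\rangle$ in $H_i$.

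Next I would compare presentations and observe that $\Gamma$ is precisely the amalgamated free product $H_1 *_{\langle c\rangle} \cdots *_{\langle c\rangle} H_m$: each defining relation of $\Gamma$ involves generators from a single factor only, and the copies of $c$ in the various $H_i$ are identified by construction. The classical normal form theorem for amalgamated free products (see, e.g., Magnus--Karrass--Solitar, \textit{Combinatorial Group Theory}, Chap.~IV, or Serre's \textit{Trees}) then asserts that every element has a unique expression $u_1 u_2 \cdots u_r \cdot z$ with $z \in \langle c\rangle$, each $u_k$ a nontrivial coset representative of $\langle c\rangle$ in some factor $H_{j_k}$, and consecutive letters in distinct factors ($j_k \ne j_{k+1}$). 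Substituting the representatives $d_{j_k}^{\varepsilon_k}$ from the previous step (with $1 \le \varepsilon_k < p_{j_k}$) and writing $z = c^s$ yields exactly the normal form of the statement, the empty product corresponding to elements of $\langle c\rangle$ alone.

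The real content is the invocation of the normal form theorem; the main obstacle will be to verify carefully its hypotheses, namely that $\langle c\rangle$ embeds as a proper infinite-cyclic subgroup in each $H_i$ with the powers of $d_i$ furnishing a full system of coset representatives. This is precisely what the assumptions $p_i \ge 2$ and $\gcd(p_i, q_i) = 1$ deliver, via the cyclic-group computation performed above.
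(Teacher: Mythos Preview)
Your proposal is correct and follows the standard route the paper has in mind: the paper does not actually prove this lemma but simply states that it ``is well-known in combinatorial group theory,'' implicitly referring to the normal form theorem for amalgamated products (the reference \cite{Magnus} is the same Magnus--Karrass--Solitar that you cite). Your identification of $\Gamma$ as $H_1 *_{\langle c\rangle}\cdots *_{\langle c\rangle} H_m$ with each $H_i\cong\mathbb Z$ and $[H_i:\langle c\rangle]=p_i$, followed by the normal form theorem, is exactly the intended argument; note only that the uniqueness in the statement tacitly assumes $\varepsilon_i\neq 0$ and $j_i\neq j_{i+1}$, as you correctly impose.
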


\begin{dem2}{of Proposition~(\ref{BA})}
By construction the family  $\{\mc T_{\eta}^{*}(B)\}_{B\in\mf B}$ satisfy property (B5) in Definition~\ref{block}. In order to check Condition (B1) for each block $\mc T_{\eta}^{*}(B)$, we will distinguish four cases according to the type of $B\in\mf B$:
\begin{enumerate}[(a)]
\item If $B$ is the aggregated block associated to an $\F$-invariant irreducible component $D$ of $\mc D$ then, after \cite{Wagreich}, we obtain a presentation of $\pi_{1}(\mc T_{\eta}^{*}(B))$ by considering the generators
$ a_{1},\ldots,a_{g},b_{1},\ldots,b_{g},c,d_{1},\ldots,d_{v}$ and the relations
\begin{equation}\label{relations}
[c,*]=1,\quad c^{\nu}\cdot\prod\limits_{i=1}^{g}[a_{i},b_{i}]\cdot\prod\limits_{j=1}^{v}d_{j}=1,\quad d_{k}^{p_{k}}=c^{q_{k}},\ k=1,\ldots,m\le v,
\end{equation}
where $g$, $v$ and $\nu$ are respectively the genus, the valence and the self-intersection of  $D$, $m$ is the number of dead branches contained in $B$ and $p_{k},q_{k}$ are the positive coprime integers given by Lemma~\ref{p2}. Each connected component of the boundary of $\mc T_{\eta}^{*}(B)$ is a torus whose fundamental group is $\langle c,d_{j}|[c,d_{j}]=1\rangle$, $j=m+1,\ldots,v$. The incompressibility of the boundary is equivalent to the following implication
\begin{equation}\label{imp}
(j>m\textrm{ and } d_{j}^{\alpha}c^{\beta}=1\textrm{ in }\pi_{1}(\mc T_{\eta}^{*}(B)))\Longrightarrow \alpha=\beta=0,
\end{equation}
which is trivially true if $m=v$. Hence, in the sequel we will assume that $m\le v-1$.
Notice that $\pi_{1}(\mc T_{\eta}^{*}(B))=\Gamma\ast_{C}G$ where
$\Gamma$ is the group considered in Lemma~\ref{unicity},  $G$ is defined by
$$G:=\langle a_{1},\ldots,a_{g},b_{1},\ldots,b_{g},c,d_{m+1},\ldots,d_{v-1}|\,[c,*]=1\rangle\cong\mb Z^{* 2g+v-m-1}\oplus\mb Z$$
and $C=\langle c|\,-\rangle\cong\mb Z$. Trivially $C$ injects in $G$.
On the other hand, because $p_{i}\ge 2$ for $i=1,\ldots,m$,  $C$ also injects into $\Gamma$.  Seifert-Van Kampen Theorem implies that $G$ can also considered as a subgroup of
$\pi_{1}(\mc T_{\eta}^{*}(B))$. Thus, for $j\le v-1$ implication (\ref{imp}) can be considered in the subgroup $G$, where it is trivially true. It only remains to treat the case of $j=v$. But  $d_{v}^{\alpha}c^{\beta}$ is equal to $\left(\prod\limits_{i=1}^{g}[a_{i},b_{i}]\prod\limits_{j=1}^{v-1}d_{j}\right)^{-\alpha}c^{\beta-\nu\alpha}$ and this expression  can not be simplified using the relations (\ref{relations}), if $g>0$ or $v-m\ge 2$ provided $(\alpha,\beta)\neq(0,0)$. It remains to consider the situation $g=0$ and $v-m=1$. In this case  $G=C$ and the element of $\Gamma$ given by $$d_{v}^{\alpha}c^{\beta}=\underbrace{(d_{1}\cdots d_{m})\cdots(d_{1}\cdots d_{m})}_{-\alpha}\, c^{\beta-\nu\alpha}$$ is written in the unique reduced form stated in Lemma~\ref{unicity}. Consequently it is trivial if and only if $\alpha=\beta=0$.

\item If $B$ is a singularity block then $\mc T_{\eta}^{*}(B)\cong T\times [0,1]$ and $\partial \mc T_{\eta}^{*}(B)\cong T\times \{0,1\}$, so that
 each connected component of its boundary is incompressible.
\item If $B$ is a genus block ($g>0$) then $$\pi_{1}(\mc T_{\eta}^{*}(B))\cong \langle a_{1},\ldots,a_{g},b_{1},\ldots,b_{g},c\,|\, [c,*]=1\rangle$$ contains $\pi_{1}(\partial\mc T_{\eta}^{*}(B))=\langle \prod\limits_{i=1}^{g}[a_{i},b_{i}],c\,|\,[c,*]=1\rangle$.
\item If $B$ is the dicritical block associated to some dicritical component $D$ of $\mc D$ of genus $g\ge 0$ and valence $v\ge 1$ then $D$ is not adjacent to any dead branch of $\mc D$
by Condition (c) in Definition~\ref{adapted}
 and consequently $\pi_{1}(\mc T_{\eta}^{*}(B))$ is the group $G$ considered in case (a) taking $m=0$.
Each connected component of $\partial\mc T_{\eta}^{*}(B)$ is a torus whose fundamental group $\langle d_{j},c\,|\,[c,d_{j}]=1\rangle$ injects into $G$.
\end{enumerate}
\end{dem2}

\subsection{Existence of adapted blocks} In order to control the topology of the foliated blocks that we will construct in Section~\ref{construction} we must consider the notions of size and roughness of a suspension type subset introduced in \cite{MM}.
First of all we recall the notion of suspension type subset. Let $P$ be a regular point of $\F$ lying on an irreducible component $D$ of $\mc D$, let $\Delta$ be a subset contained in the fibre $\pi_{D}^{-1}(P)$ and let $\mu$ be a path contained in $D$ with origin $P$.

\begin{defin}
The \textbf{suspension} of $\Delta$ over $\mu$ along the fibration $\pi_{D}$ is the union
$$\mb V_{\Delta,\mu}:=\bigcup\limits_{m\in\Delta}|\mu_{m}|,$$
where $\mu_{m}$ denotes the path of origin $m$ lying on the leaf of $\F$ passing through $m$ which lifts the path $\mu$ via $\pi_{D}$, i.e. $\pi_{D}\circ \mu_{m}=\mu$ and $\mu_{m}(0)=m$.
\end{defin}

This notion is well defined provided $\Delta$ is small enough. In \cite{MM} we have also introduced the notion of \textbf{roughness} $\mf e(\xi)$ of an oriented curve $\xi\subset\C^{*}$. Here we will say that $\Omega\subset\C$ is of infinite roughness if $\Theta=\overline\Omega\setminus\inte{\Omega}$ is not a piecewise smooth curve. Otherwise we will define the \textbf{roughness}  of $\Omega$  as $\mf e(\Omega)=\inf\{\mf(\Theta^{+}),\mf(\Theta^{-})\}$, where $\Theta^{+}$ and $\Theta^{-}$ are two curves of opposite orientations parameterizing $\Theta$.
The finiteness of the roughness is equivalent to the starlike property with respect to the origin.

Since every open Riemann surface is Stein, each fibration $\pi_{D}:W_{D}\to D$ is analytically trivial over every open set $D'\subsetneq D$.
Fix on $W':=\pi_{D}^{-1}(D')$ a trivializing coordinate
 $z_{D'}:W'\to\C$, i.e. $(z_{D'},\pi_{D})$ is a biholomorphism from $W'$ onto the product of the unit disk of $\C$ times $D'$; we define the \textbf{roughness} of a subset $E$ of $W'$ with respect to $z_{D'}$ as
$$\mf e_{z_{D'}}(E):=\sup\{\mf e(z_{D'}(E\cap\pi_{D}^{-1}(m))),\, m\in D\}\in\mb R^{+}\cup\{\infty\}.$$
We also define the \textbf{size} of $E$ with respect to $z_{D'}$ as
$$\|E\|_{z_{D'}}:=\max\{|z_{D'}(m)|,\, m\in E\},$$
and we denote $\mf c(\cdot)=\max\{\mf e_{z_{D'}}(\cdot),\|\cdot\|_{z_{D'}}\}$ called control function. \\

Now we present an existence theorem of  $\F$-adapted blocks having controlled size and roughness, which we will prove in next section.
In Section~\ref{sec4} we shall prove Theorem~\ref{main} by gluing inductively these $\F$-adapted blocks and using Localization Theorem~\ref{loc}.
We keep the notation $\mc T_{\eta}^{*}(A)$ for the blocks of the boundary assembly given in Proposition~\ref{BA}.

\begin{teo}\label{blocks}
Fix $\varepsilon>0$ and $\eta\in(0,\eta_{1}]$.
\begin{enumerate}[(I)]
\item Let $A$ be an initial fundamental block of $\mc D$.
Then there exists a
holomorphic regular curve $\Upsilon_{A}\subset \mc T_{\eta_{1}}^{*}$
 transverse to $\F$ and
there exists a subset $\mc B_{\eta}(A)$ of $\mc T_{\eta}^{*}(A)$ satisfying
the following conditions:
\begin{enumerate}[(1)]
\item for $\eta'>0$ small enough the inclusion $\mc T_{\eta'}^{*}(A)\subset\mc B_{\eta}(A)$ is $\partial$-rigid;
\item $\mc B_{\eta}(A)$ is a $\F$-adapted block;
\item the connected components $\mc V_{1},\ldots,\mc V_{\mf n_{A}}$ of $\partial \mc B_{\eta}(A)$ are of suspension type over the connected components of $\partial A$;
\item $\mf c(\mc V_{j})\le \varepsilon$ for each $j=1,\ldots,\mf n_{A}$;
\item the intersection $\Upsilon_{A,\eta}:=\Upsilon_{A}\cap\mc B_{\eta}(A)$
is incompressible in $\mc B_{\eta}(A)$ and it
satisfies
 $\mr{Sat}_{\F}(\Upsilon_{A,\eta},\mc B_{\eta}(A))=\mc B_{\eta}(A)$ and $\Upsilon_{A,\eta}\loar\mc B_{\eta}(A)$.
\end{enumerate}
\item Let $A$ be a fundamental block of $\mc D$ which is not an initial or breaking block. Then there exists a
holomorphic regular curve $\Upsilon_{A}\subset \mc T_{\eta_{1}}^{*}$
 transverse to $\F$,
there exist a constant $C_{A}>0$ and a function $\rho_{A}:\R^{+}\to\R^{+}$ with $\lim\limits_{c\to 0}\rho_{A}(c)=0$, such that for every suspension type subset $\mc V\subset \mc T_{\eta}^{*}(A)$ over a connected component of $\partial A$ satisfying $\mf c(\mc V)\le C_{A}$, there exists a subset $\mc B_{\eta}(A)$ of $\mc T_{\eta}^{*}(A)$  satisfying Properties (1), (2), (3) and (5) of Part~(I) as well as
\begin{enumerate}[(3')]
\item[(3')] $\mc V_{1}\loar\mc V$;
\item[(4')] $\mf c(\mc V_{j})\le \rho_{A}(\mf c(\mc V))$ for each $j=1,\ldots,\mf n_{A}$.
\end{enumerate}
\item Let $A$ be a breaking block of $\mc D$.
Then for every choice of suspension type subsets $\mc V_{1},\ldots, \mc V_{\mf n_{A}}\subset\mc T_{\eta}^{*}(A)$ over the connected components of $\partial A$ such that the inclusion $\bigcup\limits_{i=1}^{\mf n_{A}}\mc V_{i}\subset\partial\mc T_{\eta}^{*}(A)$ be rigid, there exists a subset $\mc B_{\eta}(A)$ of $\mc T_{\eta}^{*}(A)$   satisfying Properties (1) and (2) of Part~(I), such that the connected components $\mc V_{1}',\ldots,\mc V_{\mf n_{A}}'$ of $\partial \mc B_{\eta}(A)$ are of suspension type and they satisfy
\begin{enumerate}[(3'')]
\item[(3'')] $\mc V_{j}'\loar\mc V_{j}$
\item[(4'')] $\mf c(\mc V_{j}')\le\varepsilon$
 \end{enumerate}
 for each $j=1,\ldots,\mf n_{A}$.
\end{enumerate}
\end{teo}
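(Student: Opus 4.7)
The plan is to proceed by a case-by-case analysis along the four types of fundamental blocks listed in Definition~\ref{defblocks} (aggregate, singularity, genus, dicritical), combining local analytic normal forms at the singular points with dynamical information about the holonomy. In each case $\mc B_\eta(A)$ will be built as an explicit union of a neighborhood of the core of $A$ together with the $\F$-saturation of a well-chosen transversal $\Upsilon_A$, cut off by the ambient Milnor function $H$. Properties (1)--(5) of Part~(I), and their analogues in (II) and (III), are then verified in this order: first $\partial$-rigidity (using the gradient flow of $\xi$ from Section~\ref{milnor} and Proposition~\ref{rig2}); next the suspension-type nature of $\partial\mc B_\eta(A)$ (by construction); then the size/roughness bounds; and finally the four conditions (B1)--(B4) of Definition~\ref{block}, which, via the fundamental-group computations already carried out in the proof of Proposition~\ref{BA}, reduce to incompressibility of the suspension-type boundary components.

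For Case (I) I would choose $\Upsilon_A$ as in Remark~\ref{minimal}, taking one holomorphic disk transverse to $\F$ through a regular point of the distinguished component, and set $\mc B_\eta(A)$ to be the holonomy-saturation of $\Upsilon_{A,\eta}$ inside $\mc T_\eta^*(A)$. When $A$ is an initial aggregate block the non-linearizability of the local holonomy at the distinguished singular point $p_0$ produces an element of the holonomy pseudogroup with a hyperbolic fixed point; iterating it compresses the boundary below any prescribed $\varepsilon$, giving (4). When $A$ is an initial genus block ($g(D)>0$), the same compression is provided by the non-linearizable holonomy of $\Gamma_D$. The $\F$-saturation property and $\Upsilon_{A,\eta}\loar\mc B_\eta(A)$ in (5) follow from the definition of $\mc B_\eta(A)$.

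For Case (II), $A$ is either a non-initial aggregate block or a genus block whose $\Gamma_D$-holonomy is linearizable, so there is no ambient hyperbolic dynamics to exploit. I would instead propagate the given suspension $\mc V$ across $A$ by transporting it along the holonomy of paths in $A^*$ emanating from the base of $\mc V$, using as $\Upsilon_A$ a transversal attached to that base. The key point is a modulus-of-continuity estimate for this holonomy transport, yielding the function $\rho_A$ with $\rho_A(c)\to 0$ as $c\to 0$; on the dead-branch contributions this estimate relies on the resonance relation $d_\ell^p=c^q$ of Lemma~\ref{p2} together with the analytic normal form at each resonant singularity. For Case (III) there is essentially nothing dynamical to do: for a linearizable singularity block one reads off the block directly from the normal form $x_s dy_s - \lambda_s y_s dx_s$ with $\lambda_s\in\C\setminus\mb Q_{>0}$, and for a dicritical block one uses the holomorphically trivial $\F$-fibration of Remark~\ref{obs14}(c) to push the prescribed boundary suspensions $\mc V_j$ inward by a real analytic vector field tangent to $\F$ outside the separatrices, obtaining $\mc V_j'$ with $\mc V_j'\loar\mc V_j$ and $\mf c(\mc V_j')\le\varepsilon$.

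The main obstacle I foresee is in Case (II): obtaining the modulus-of-continuity $\rho_A$ \emph{simultaneously} with the $\partial$-rigidity of $\mc T_{\eta'}^*(A)\subset\mc B_\eta(A)$ forces a delicate compatibility between the size of $\Upsilon_A$, the constant $C_A$, and the shrinking scale $\eta'$, because the transported boundary must remain strictly of suspension type over the entire $\partial A$ and not merely over an open subset. This is essentially the technical heart of the argument already present in \cite{MM}, and I expect the dead-branch case to be the most involved, since the interaction between the resonant indices $\lambda_i\in\mb Q_{<0}$ and the compression rates forces one to work with the explicit unique normal form of Lemma~\ref{unicity} when checking that no accidental identification of boundary loops occurs.
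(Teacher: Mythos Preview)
Your overall strategy --- a case analysis over the four block types of Definition~\ref{defblocks}, with the aggregate and singularity cases deferred to \cite{MM} --- matches the paper's organization exactly. The paper likewise cites \cite[Theorem~3.2.1]{MM} for aggregate and singularity blocks and devotes Section~\ref{construction} only to the two new cases (genus and dicritical). Your treatment of the dicritical block is close to what the paper does: shrink each prescribed $\mc V_i$ to a saturated $\mc V_i'$ of small control, saturate over the disk $D_{s_i}^{(i)}$, and glue with the trivial $\F$-fibration of $\pi_D$ from Remark~\ref{obs14}(c).

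The genuine gap is in the genus block case. Your proposal ``set $\mc B_\eta(A)$ to be the holonomy-saturation of $\Upsilon_{A,\eta}$ inside $\mc T_\eta^*(A)$'' does not produce a set whose boundary is of suspension type over $\partial A=\Gamma_D$, and without that you cannot verify (B2), (B3), or property~(3). The difficulty is that $D'=\overline{D\setminus D_{\Gamma_D}}$ has genus $g>0$: the saturation of a small transversal over $D'$ has a complicated multivalued boundary over $\Gamma_D$, and there is no obvious ``compression by a hyperbolic holonomy element'' available (the holonomy of $\Gamma_D$ need not be hyperbolic even in the initial case --- ``non-linearizable'' is all you are given). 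The paper's construction is quite different: it fixes a $4g$-gon normal form $\Psi:\mc P\to D$, builds a suspension $\mc B_\Lambda$ over the wedge $\Lambda=\bigcup|\alpha_j|\cup|\beta_j|$, then uses an explicit \emph{foliated} radial retraction (Lemma~\ref{R}) to thicken $\mc B_\Lambda$ to $\mc B_{D'}$ and to prove the crucial characterizations (iv)--(v) of when a path (resp.\ a leafwise path) with endpoints on $\pi_D^{-1}(m_\Gamma)$ is homotopic into $\mc B_\Gamma$. Property (B3) for the genus block is deduced from these characterizations, not from any holonomy contraction. Finally, $\mc B_\Gamma$ is only of \emph{multi}-suspension type, so the paper applies the \emph{rabotage} procedure of \cite[Definition~4.3.5]{MM} to replace it by a genuine suspension $\mc R_\Gamma$ and sets $\mc B_\eta(A)=(\mc B_{D'}\setminus\mc B_\Gamma)\cup\mc R_\Gamma$. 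None of these ingredients --- the polygon model, the foliated retraction lemma, or rabotage --- appears in your sketch, and I do not see how to obtain suspension-type boundary or the $1$-$\F$-connectedness (B3) without them. For Part~(II) on a genus block the paper does not use a modulus-of-continuity estimate either: it uses linearizability of $h_{\Gamma_D}$ to pick an \emph{invariant} conformal disk $\Sigma$ with $h_{\Gamma_D}^{\pm1}(\Sigma)\subset\Sigma$, then restarts the whole polygon construction from the disk $T$ obtained by transporting $\Sigma$ to $\pi_D^{-1}(m_\Lambda)$.
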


We will prove this theorem in the following section.

\section{Construction of foliated adapted blocks}\label{construction}
Theorem~\ref{blocks} is proved in \cite[Theorem 3.2.1]{MM} when the fundamental block $A$ is an aggregated block or a singularity block. Thus, it suffices to consider the cases of genus blocks and dicritical blocks which we treat separately in sections~\ref{genus} and~\ref{dicritical} respectively.

\subsection{Genus type foliated adapted block}\label{genus}

In the sequel we will assume that the genus of $D$ is $g>0$. In order to simplify the notations in this section we will denote
$$\Gamma:=\Gamma_{D},\qquad D_{\Gamma}:=D_{\Gamma_{D}},\qquad\textrm{and}\qquad D':=\overline{D\setminus D_{\Gamma_{D}}}.$$

\subsubsection{Preliminary constructions} We fix a \textbf{normal form} for $D$ given by
\begin{itemize}
\item a closed regular polygon $\mc P\subset \C$ of $4g$ sides of length $1$ centered at the origin;
\item arc-length parameterizations $a_{1},b_{1},a_{1}',b_{1}',\ldots,a_{g},b_{g},a_{g}',b_{g}'$ of the adjacent sides of $\mc P$ positively oriented according to $\partial \mc P$ such that $a_{1}(0)\in\R^{+}$;
\item a continuous map $\Psi:\mc P\to D$  such that the restriction of $\Psi$ to each side $|a_{j}|$ or $|b_{j}|$ is a smooth immersion and the compositions
$\alpha_{j}:=\Psi\circ a_{j}$, $\beta_{j}:=\Psi\circ b_{j}$, $j=1,\ldots,g$, are simple loops having a same origin $m_{\Lambda}$ which only meets each other in that point; moreover $\alpha_{j}^{-1}=\Psi\circ a_{j}'$ and $\beta_{j}^{-1}=\Psi\circ b'_{j}$ for $j=1,\ldots,g$;
\item $\Psi$ has an extension to an open neighborhood of $\mc P$ into $\C$ which is a local homeomorphism and its restriction to $\mc P\setminus\partial\mc P$ is a homeomorphism onto $D\setminus\Lambda$, where
$$\Lambda=\bigcup\limits_{j=1}^{g}|\alpha_{j}|\cup|\beta_{j}|$$ is a  wedge of $2g$ circles.
\end{itemize}

We fix an open disk $\mb D_{\epsilon}\subset\mc P$ centered at the origin of radius $\epsilon<\cos\left(\frac{\pi}{2g}\right)$. Up to modifying slightly $\Psi$ we can assume that $\Psi(\overline{\mb D_{\epsilon}})=\overline{D_{\Gamma}}$ so that the loop $\theta:[0,1]\to\Gamma=\partial D'$ given by $\theta(s)=\Psi(\epsilon e^{2i\pi s})$ is a simple parametrization of $\Gamma$. We consider the pull-back $\wh\pi:\wh W_{\mc P'}\to\mc P'$ by the restriction of $\Psi$ to $\mc P':=\mc P\setminus\mb D_{\epsilon}$ of the fibration $\pi_{D}:W_{D'}\to D'$, where $W_{D'}:=\pi_{D}^{-1}(D')\setminus D$. Thus, $\wh\pi$ is a  continuous $\mb D^{*}$-fibration which is globally trivial. Let $\wh \Psi:\wh W_{\mc P'}\to W_{D'}$ the continuous map which make commutative the cartesian diagram
$$
\begin{array}{ccc}
  \wh W_{\mc P'}&\stackrel{\wh\Psi}{\longrightarrow}& W_{D'} \\
  \wh \pi\downarrow\phantom{\wt \pi}&{\scriptstyle\square }&\downarrow\pi _{D} \\
  \mc P'&\stackrel{\Psi}{\longrightarrow}& D'\,.
\end{array}
$$
Clearly $\wh\Psi$ is a local homeomorphism whose restriction to $\wh \pi^{-1}(\mc P'\setminus\partial \mc P)$ is a homeomorphism onto $\pi_{D}^{-1}(D'\setminus\Lambda)\setminus D$.
The foliation $\F_{|W_{D'}}$ lifts to a regular foliation $\wh\F$ on $\wh W_{\mc P'}$ transverse to the fibres of $\wh\pi$.

We fix a conformal pointed disk $T\subset\pi_{D}^{-1}(m_{\Lambda})$ whose size and roughness is bounded by a constant $C_{D}>0$ small enough so that all the constructions we shall done in the sequel lead us to sets having finite size and roughness.
By construction there exists $\wh T\subset\wh \pi^{-1}(\wh m)$ such that $T=\wh\Psi(\wh T)$, where $\wh m$ is the vertex of $\mc P$ lying on $\R^{+}$. The image by $\wh\Psi$ of the suspension $\mb V_{\wh T,\wh\mu}$ of $\wh T$ via $\wh \pi$ over the loop $\wh \mu:=a_{1}\svee b_{1}\svee a_{1}'\svee b_{1}'\svee\cdots\svee a_{g}\svee b_{g}\svee a_{g}'\svee b_{g}'$ can be considered as the suspension of $T$ via $\pi_{D}$ over the loop
\begin{equation}\label{mu}
\mu:=\Psi\circ\wh\mu=\alpha_{1}\svee\beta_{1}\svee\alpha_{1}^{-1}\svee\beta_{1}^{-1}\svee\cdots\svee\alpha_{g}\svee\beta_{g}\svee\alpha_{g}^{-1}\svee\beta_{g}^{-1}.
\end{equation}
The subset $\underline{\mc B}:=\wh\Psi^{-1}(\wh\Psi(\mb V_{\wh T,\wh\mu}))$ of $\wh \pi ^{-1}(\partial \mc P')$ is not necessarily a multisuspension set in the sense of \cite[Definition 4.2.1]{MM} because over each vertex of $\mc P$ this set is the union of $4g$ pointed disks, two of them are contained in the adherence of $\underline{\mc B}\setminus\wh\pi^{-1}(\mc S_{\mc P})$, where $\mc S_{\mc P}$ is the vertex set of $\mc P$, but the other two could not satisfy this condition. We put
$$\wh{\mc B}_{\partial \mc P}:=\overline{\underline{\mc B}\setminus\wh\pi^{-1}(\mc S_{\mc P})},\qquad \mc B_{\Lambda}:=\wh\Psi(\wh{\mc B}_{\partial\mc P}).$$
The following diagram is commutative but not necessarily cartesian
$$
    \begin{array}{ccc}
  \wh {\mc B}_{\partial \mc P} &\stackrel{\wh\Psi}{\longrightarrow}&\mc B_\Lambda  \\
  \wh \pi\downarrow\phantom{\wt \pi}&{\scriptstyle\circlearrowleft }&\downarrow\pi _{D} \\
  \partial\mc P&\stackrel{\Psi}{\longrightarrow}& \Lambda \,.
\end{array}
$$
Now we will precise the geometry of $\mc B_{\Lambda}$. Let us denote by $h_{\alpha_{j}}$ (resp. $h_{\beta_{j}}$) the holonomy transformations of $\F$ along the loops $\alpha_{j}$ (resp. $\beta_{j}$), represented over the transverse section $\pi_{D}^{-1}(m_{\Lambda})$. If $C_{D}>0$ is small enough then the following pointed $4g$ disks are well defined
$$T_0 := T\,,\quad T _{4j+1}
=h_{\alpha _j}(T _{j})\,, \quad
 T _{4j+2} =
  h_{\beta _j}(T _{4j+1})\,,$$
   $$T _{4j+3} =h_{\alpha _j}^{-1}(T _{4j+2})\,,\quad
 T _{4j+4} =h_{\beta _j}^{-1}(T _{4j+3})\,,$$
$j=1,\ldots , g$. We have a decomposition $$\mc B_\Lambda = \mc
B_1\cup\mc B_2\cup\cdots\mc B_{2g}\,,$$ of $\mc B_\Lambda $ in  $2g$
pieces of suspension type
$$ \mc B_{2j-1} :=  \mb V_{T _{4j-4},\;\alpha _j}\cup \mb V_{T _{4j-2},\;\alpha
_j^{-1}} = \mb V_{T _{4j-4}\cup T _{4j-1},\; \alpha _j}\,,$$
$$
\mc B_{2j}:=\mb V_{T _{4j-3},\; \beta _j}\cup \mb V_{T _{4j-1},\;
\beta _j^{-1}} = \mb V_{T _{4j-3}\cup T _{4j},\; \beta _j}\, ,$$
$j=1,\ldots , g$,
with finite roughness. Moreover, $\pi_{D}(\mc B_{i}\cap\mc B_{j})=\{m_{\Lambda}\}$.
From this description follows:
\begin{enumerate}[($\ast$)]
\item\it If $\lambda:[0,1]\to L$ is a simple parametrization of a leaf $L$ of $\F_{|\mc B_{k}}$, $k=1,\ldots,2g$, then there exists a unique path $\wh\lambda:[0,1]\to\wh{\mc B}_{\partial\mc P}$ such that $\wh\Psi\circ\wh\lambda=\lambda$ and the orientations of $\wh\pi\circ\wh\lambda$ and $\partial\mc P$ coincide.
\end{enumerate}
\begin{lema}\label{sous}
If $\chi$ is a path lying on a leaf $L$ of $\mc B_{\Lambda}$ such that $\pi\circ\chi=\mu^{\nu}$ then there exists a unique path $\wh\chi:[0,1]\to\wh{\mc B}_{\partial\mc P}$ lying on a leaf of $\wh\F$ such that $\wh\Psi\circ\wh\chi=\chi$.
\end{lema}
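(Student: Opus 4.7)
The approach I would take is to lift $\chi$ piece by piece along the elementary arcs making up $\mu^\nu$, apply property $(\ast)$ on each piece, and then glue the resulting sub-lifts using the fact that $\wh\Psi$ restricts to a homeomorphism on each fiber of $\wh\pi$.

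First I would decompose $\mu^\nu$ as a concatenation of $8g\nu$ consecutive elementary arcs of the form $\alpha_j^{\pm 1}$ or $\beta_j^{\pm 1}$, inducing a partition $0=t_0<t_1<\cdots<t_{8g\nu}=1$ such that $\chi_k:=\chi|_{[t_{k-1},t_k]}$ projects via $\pi_D$ onto a single elementary loop $\mu_k$. Because $\mu_k$ is a simple loop and $\pi_D$ is a local diffeomorphism when restricted to any leaf of $\F$, each $\chi_k$ is a simple path on a leaf of $\F$. Moreover, the decomposition $\mc B_\Lambda=\mc B_1\cup\cdots\cup\mc B_{2g}$ forces $\chi_k$ to lie entirely inside a single block $\mc B_{i(k)}$, so that $\chi_k$ is a simple parametrization of a leaf of $\F_{|\mc B_{i(k)}}$ in the sense required by $(\ast)$.

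Next I would apply property $(\ast)$ to each $\chi_k$, obtaining a unique path $\wh\chi_k:[t_{k-1},t_k]\to\wh{\mc B}_{\partial\mc P}$ on a leaf of $\wh\F$ with $\wh\Psi\circ\wh\chi_k=\chi_k$ and with $\wh\pi\circ\wh\chi_k$ positively oriented in $\partial\mc P$. The crucial step is to prove that successive lifts concatenate continuously, that is, $\wh\chi_k(t_k)=\wh\chi_{k+1}(t_k)$ for every $k$. Since $\wh\mu$ traverses $\partial\mc P$ as a cycle of $4g$ consecutive positively oriented edges, the end-vertex of the $k$-th elementary arc coincides with the start-vertex of the $(k+1)$-th arc in $\mc P$; denote this common vertex by $v_k$. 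The orientation compatibility in $(\ast)$ forces both $\wh\chi_k(t_k)$ and $\wh\chi_{k+1}(t_k)$ to lie in the fiber $\wh\pi^{-1}(v_k)$. As $\wh\pi$ is the pullback of $\pi_D$ along $\Psi$, the restriction of $\wh\Psi$ to this fiber is a homeomorphism onto $\pi_D^{-1}(m_\Lambda)$, and since both endpoints share the same $\wh\Psi$-image $\chi(t_k)$, they must coincide. The same argument handles the transitions $k=8g\ell$ between consecutive copies of $\mu$ in $\mu^\nu$, with common vertex $\wh m$.

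Finally, the concatenation $\wh\chi:=\wh\chi_1\svee\cdots\svee\wh\chi_{8g\nu}$ is a continuous path in $\wh{\mc B}_{\partial\mc P}$ which, being piecewise on leaves of $\wh\F$ and continuous, lies on a single leaf of $\wh\F$ and satisfies $\wh\Psi\circ\wh\chi=\chi$; uniqueness of $\wh\chi$ is inherited directly from the uniqueness part of $(\ast)$ applied to each piece. I expect the main obstacle to be precisely the gluing step: one must verify that the two adherent disks of $\wh{\mc B}_{\partial\mc P}$ meeting at a vertex of $\mc P$ coincide as subsets of the relevant fiber, which is exactly what the combination of the orientation constraint in $(\ast)$ together with the fiberwise injectivity of $\wh\Psi$ delivers.
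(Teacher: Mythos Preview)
Your proposal is correct and follows essentially the same strategy as the paper: decompose $\chi$ into pieces lying in single blocks $\mc B_k$, lift each piece uniquely via property $(\ast)$, and glue by checking that both endpoints lie over the same vertex of $\mc P$ (the paper phrases this as ``$\wh\pi\circ\wh\chi_j$ is the unique positively oriented lift of $\pi\circ\chi_j$, and these lifts glue because $\pi\circ\chi$ lifts''), then invoke the fiberwise injectivity of $\wh\Psi$. The only slip is the count: $\mu$ consists of $4g$ elementary arcs, so $\mu^\nu$ has $4g|\nu|$ pieces, not $8g\nu$.
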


\begin{dem}
We decompose $\chi=\chi_{1}\svee\cdots\svee\chi_{n}$ with $|\chi_{j}|\subset\mc B_{k_{j}}$. By property ($\ast$) each $\chi_{j}$ possesses a unique lift $\wh\chi_{j}$ with the same orientation as $\partial\mc P$. We must prove that all these lifts
glue in a unique continuous path $\wh\chi$. Fix $j\in\{1,\ldots,n\}$ and notice that the point $\chi_{j}(1)$ possesses exactly $4g$ pre-images by $\wh\Psi$, one  over each vertex of $\mc P$. To prove that $\wh\chi_{j}(1)=\wh\chi_{j+1}(0)$ it suffices to see that $\wh\pi\circ\wh\chi_{j}(1)=\wh\pi\circ\wh\chi_{j+1}(0)$. But $\wh\pi\circ\wh\chi_{j}$ is the unique lift of $\pi\circ\chi_{j}$ with the same orientation as $\partial\mc P$ and theses lifts glue because  $\pi \circ \chi $ lifts, by hypothesis.
\end{dem}

\begin{obs}\label{top}
Since $C_{D}>0$ is small enough so that the roughness of $\mc B_{j}$ is finite, we have that for every $\eta'>0$ small enough $\mc B_{\Lambda}$ retracts onto $\mc T_{\eta'}^{*}(\Lambda)$, which has the homotopy type of a product of a circle by the wedge of circles $\Lambda$. More precisely, for all $m_{0}\in\bigcup\limits_{k=1}^{4g}T_{k}$ the map
\begin{equation}\label{isopi1}
\chi : \pi _1(\mc B_\Lambda ,m_0) \rightarrow \pi _1(\Lambda ,
m_\Lambda )\oplus \Z\,,\quad [{\lambda}]_{\mc B_\Lambda } \mapsto
\left([\pi _D\circ \lambda ]_\Lambda ,\; \frac{1}{2i\pi
}\int_\lambda \frac{dz}{z}\right)\,,
\end{equation}
is an isomorphism.
\end{obs}

\begin{lema}\label{R}
There exist a neighborhood $\wh{\mc B}$ of $\mc P'$ in $\wh W_{D'}$ and two retractions by deformation
$r:D'\to\Lambda$ and $R:\mc{B}_{D'}:= \wh{\Psi }(\wh{\mc B})\to \mc B_{\Lambda}$
such that
\begin{enumerate}[(i)]
\item $\wh{\mc B}\cap \wh{\pi }^{-1}(\partial\mc P)=\wh{\mc B}_{\partial \mc P}$, $\mc B_{D'}\cap \pi _{D}^{-1}(\Lambda )=\mc B_{\Lambda }$ and the following  diagram is commutative:
\begin{equation*}
\begin{array}{ccc}
  \wh {\mc B}&\stackrel{\wh\Psi}{\longrightarrow}&\mc B_{D'} \\
  \wh \pi\downarrow\phantom{\wt \pi}&{\scriptstyle\circlearrowleft }&\downarrow\pi _{D} \\
  \mc P'&\stackrel{\Psi}{\longrightarrow}& D'\,.
\end{array}
\end{equation*}
\item $R$ and $r$ commute with the fibration $\pi_{D}$, i.e. $\pi_{D}\circ R=r\circ \pi_{D}$;
\item for every leaf $L$ of $\F_{|\mc B_{D'}}$, the restriction $R_{|L}$ is a retraction by deformation of $L$ onto $L\cap\mc B_{\Lambda}$;
\item every path $\gamma$ on $\mc B_{D'}$ with endpoints lying on the fibre $\pi_{D}^{-1}(m_{\Gamma})$ of a point $m_{\Gamma}\in\Gamma$ is homotopic inside $\mc B_{D'}$ to a path contained in $\mc B_{\Gamma} := \mc B_{D'}\cap \pi _{D}^{-1}(\Gamma )$ if and only if the element $[\pi_{D}\circ R\circ\gamma]_{\Lambda}$ of $\pi_{1}(\Lambda,m_{\Lambda})$ belongs to the subgroup generated by the loop $\mu$ defined in Equation~(\ref{mu});
\item a path $\gamma$ lying on a leaf $L$ of $\F_{\mc B_{D'}}$ with endpoints on $\pi_{D}^{-1}(m_{\Gamma})$ is homotopic inside $L$ to a path lying on {$\mc B_{\Gamma}\cap L$}
    if  $[\pi_{D}\circ R\circ\gamma]_{\Lambda}$ belongs to the subgroup $\langle\mu\rangle$ of $\pi_{1}(\Lambda,m_{\Lambda})$.
\end{enumerate}
\end{lema}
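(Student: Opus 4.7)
The plan is to first build a base retraction $r:D'\to\Lambda$ from the polygon model, then lift it leafwise to $R$ using that $\wh\F$ is transverse to the fibres of $\wh\pi$, and finally verify the five properties. I would start with the radial retraction $\tilde r:\mc P'\to\partial\mc P$ from the center of $\mb D_{\epsilon}$ together with the straight-line homotopy $\tilde H_{s}(x)=(1-s)x+s\tilde r(x)$. Since $\tilde H_{s}$ is the identity on $\partial\mc P$, it respects the side identifications performed by $\Psi$ and hence descends through $\Psi$ to a deformation retract $r:D'\to\Lambda$. By choosing $m_{\Gamma}$ on the radial ray through the vertex that $\Psi$ sends to $m_{\Lambda}$, one can arrange $r(m_{\Gamma})=m_{\Lambda}$, which fixes the base-point conventions used in (iv) and (v).

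For the lift, because $\mc P'$ is simply connected and $\wh\F$ is transverse to the fibres of $\wh\pi$, the base homotopy $\tilde H_{s}$ admits a unique horizontal lift $\wh R_{s}$ along leaves of $\wh\F$ on a suitable neighbourhood $\wh{\mc B}$ of $\mc P'$; this $\wh{\mc B}$ must be taken small enough that the entire lifted trajectory of each of its points stays inside the $\mb D^{*}$-fibration. Setting $\wh R:=\wh R_{1}$ yields a deformation retract $\wh{\mc B}\to\wh{\mc B}_{\partial\mc P}$ commuting with $\wh\pi$ and whose restriction to each leaf of $\wh\F$ is a deformation retract onto the intersection of the leaf with $\wh{\mc B}_{\partial\mc P}$. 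Because $\tilde H_{s}$ fixes $\partial\mc P$ pointwise, the fibres over identified boundary points of $\mc P$ are untouched by $\wh R_{s}$, so $\wh R$ descends through $\wh\Psi$ to a well-defined retraction $R:\mc B_{D'}\to\mc B_{\Lambda}$. Properties (i), (ii) and (iii) are then immediate from the construction.

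For (iv) I would combine the fact that $R$ is a homotopy equivalence with the isomorphism $\chi$ of Remark~\ref{top}. A path $\gamma$ with endpoints on $\pi_{D}^{-1}(m_{\Gamma})\subset\mc B_{\Gamma}$ is homotopic inside $\mc B_{D'}$ to a path contained in $\mc B_{\Gamma}$ if and only if its class lies in the image of the inclusion morphism $\imath_{*}:\pi_{1}(\mc B_{\Gamma})\to\pi_{1}(\mc B_{D'})\cong\pi_{1}(\Lambda,m_{\Lambda})\oplus\Z$. Now $\pi_{1}(\mc B_{\Gamma})$ is generated by the fibre loop $c$ and the loop $\theta$ parametrizing $\Gamma$; their images under $\imath_{*}$, read off through $\chi\circ R_{*}$, are the $\Z$-generator and the class $[\mu]\in\pi_{1}(\Lambda,m_{\Lambda})$ respectively, since by construction $r\circ\theta$ traces out the standard loop $\mu$ of (\ref{mu}). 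Hence the image of $\imath_{*}$ is exactly $\langle\mu\rangle\oplus\Z$, which is the content of (iv).

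Property (v) is the leafwise analogue. Given $\gamma$ on a leaf $L$ with $[\pi_{D}\circ R\circ\gamma]_{\Lambda}=\mu^{k}$, I would apply Lemma~\ref{sous} to lift $R\circ\gamma$ uniquely via $\wh\Psi$ to a path on a leaf of $\wh\F$ inside $\wh{\mc B}_{\partial\mc P}$ whose $\wh\pi$-projection is $\wh\mu^{k}$. Inside the annulus $\mc P'$ this boundary loop is planarly homotopic, with endpoints staying in the vertex set of $\mc P$, to the $k$-th iterate of $\partial\mb D_{\epsilon}$ (whose $\Psi$-image is $\theta^{k}$), and the horizontal lift of this planar homotopy along $\wh\F$ produces a leaf-homotopy of $R\circ\gamma$ to a path contained in $\mc B_{\Gamma}\cap L$. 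Concatenating with the leaf-homotopy supplied by (iii) from $\gamma$ to $R\circ\gamma$ finishes (v). The most delicate point I anticipate is ensuring that the horizontal lifts in the construction of $\wh R$ and in the argument for (v) remain inside the $\mb D^{*}$-fibration; this is precisely why the size parameter $C_{D}$ was fixed small enough at the start of the section, and making this uniform bound explicit is where the main technical care is required.
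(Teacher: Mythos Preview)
Your construction of $r$ and $R$ and your treatment of (i)--(iv) are essentially the paper's argument: the radial (straight-line) retraction of $\mc P'$ onto $\partial\mc P$, its horizontal lift along $\wh\F$, and the identification of the image of $\pi_1(\mc B_\Gamma)$ in $\pi_1(\mc B_\Lambda)\cong\pi_1(\Lambda)\oplus\Z$ via the isomorphism $\chi$. That part is fine.

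The gap is in (v). You invoke Lemma~\ref{sous} to ``lift $R\circ\gamma$ uniquely via $\wh\Psi$ to a path \dots\ whose $\wh\pi$-projection is $\wh\mu^{k}$'', but Lemma~\ref{sous} has as \emph{hypothesis} that the $\pi_D$-projection already equals $\mu^{\nu}$; it does not produce this. What you know is only that $\pi_D\circ R\circ\gamma$ is \emph{homotopic} in $\Lambda$ to $\mu^{k}$, and in general $R\circ\gamma$ will cross the fibre over $m_\Lambda$ many times, giving a word in the letters $\alpha_j^{\pm1},\beta_j^{\pm1}$ that is not the reduced word $\overline\mu^{\,k}$. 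Since $\wh\Psi:\wh{\mc B}_{\partial\mc P}\to\mc B_\Lambda$ is not a covering (it is $2$-to-$1$ over the open sides and has $4g$ sheets over the vertex fibres, with only some of them lying in $\wh{\mc B}_{\partial\mc P}$), an arbitrary leaf-path in $\mc B_\Lambda$ need not lift continuously; that is exactly why Lemma~\ref{sous} is stated only for paths projecting to $\mu^\nu$.

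The paper fills this gap with an explicit assertion $(\star)$: any leaf-path $\delta$ in $\mc B_\Lambda$ with $[\pi_D\circ\delta]=\mu^\nu$ can be \emph{leaf}-homotoped to a path $\chi$ with $\pi_D\circ\chi=\mu^\nu$. The proof is genuinely combinatorial: one cuts $\delta$ at the crossings of $\pi_D^{-1}(m_\Lambda)$, straightens each piece to a letter $\alpha_j^{\pm1}$ or $\beta_j^{\pm1}$ by a leaf-homotopy, obtains a word $M(\delta)$, and then shows that each elementary cancellation $v\,v^{-1}\to\emptyset$ in the free group $\pi_1(\Lambda)$ can be realised by a leaf-homotopy (because the two adjacent pieces $\rho_j,\rho_{j+1}$ lie in the same suspension block $\mc B_k$ and can be folded back). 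Only after reaching the reduced word $\overline\mu^{\,\nu}$ can Lemma~\ref{sous} be applied, and then your planar homotopy (which in the paper is the second retraction $\wh R':\wh{\mc B}\to\wh{\mc B}_{\partial\mb D_\epsilon}$) finishes the job. Your sketch jumps directly to the last step; you should insert the word-reduction argument to make (v) go through.
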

\begin{dem}
Let $\Phi(t,z)$ the flow of the radial vector field $\mc R=z\frac{\partial}{\partial z}$ on $\C$. If $z\in\mc P'$ we define $\zeta(z)=\inf\{t\in\R_{>0}\,|\,\Phi(t,z)\notin\mc P'\}$. The map
$$ \wh h :\mc P'  \times [0, 1]
\longrightarrow \mc P' \,, \qquad \wh h(z, t) := \Phi\left(t\, \varsigma(z),\, z\right)$$
is a homotopy defining a retraction by deformation $\wh r:= h(\cdot, 1) : \mc P' \rightarrow\partial \mc P$. Its restriction to $\partial \overline{\mb D}_\epsilon  \times [0, 1]$ is a homeomorphism sending each segment $\{z\}\times [0,1]$ onto the intersection of the half line $\R_{\geq 0} \cdot z$ with $\mc P'$. We define $r:=\wh\Psi\circ\wh r\circ (\Psi_{|\Gamma})^{-1}$. The vector field $\mc R$ lifts (via $\wh \pi$) to a unique vector field $\wh{\mc R}$ tangent to the foliation $\wh\F$. Let $\wh\Phi(t,m)$ be its flow and denote $\varsigma'(z) := \inf\{t\in \mb R_{>0}\;|\; \Phi(-t,z)\in
\mb D_\epsilon\}$. The map $(m,t)\mapsto \wh\Phi(-t\varsigma'(\wh\pi (m)) ,m)$
define a homeomorphism of $\wh{ \mc B}_{\partial \mc P'}\times [0,1]$ onto a neighborhood $\wh{\mc B}$ of $\mc P'$ in $\wh W_{D'}$. Consider now the homotopy
$$
\wh H : \wh{ \mc B}\times [0,1] \longrightarrow \wh{ \mc B}\,,\quad
\wh H(m, t) : = \wh\Phi(t\varsigma(\wh\pi (m)) ,m)
\,,\qquad \wh \pi \circ \wh H = \wh h\circ \wh \pi\,,$$
which lifts $\wt h$, and the homotopy
$$
\wh H ': \wh{ \mc B}\times [0,1] \longrightarrow \wh{ \mc B}\,,\quad
\wh H'(m, t) : = \wh\Phi(-t\varsigma'(\wh\pi (m)) ,m)
\,,\qquad \wh \pi \circ \wh H' = \wh h'\circ \wh \pi\,,$$ 
which lifts the homotopy
$$
\wh h' : \partial \overline{\mb D}_\epsilon  \times [0, 1]
\longrightarrow \mc P' \,, \qquad \wh h'(z, t) := \Phi\left(-t\, \varsigma'(z),\, z\right)\,.$$
Clearly the restrictions
$$
\wh H'_{|\wh{\mc B}_{\partial \mc P}\times[0,1]} : \wh{\mc
B}_{\partial \mc P}\times[0,1]\stackrel{\sim}{\longrightarrow}
\wh{\mc B}\ \ \hbox{and}\ \ \wh H_{|\wh{\mc B}_{\partial\mb D_{\epsilon}}\times[0,1]} : \wh{\mc B}_{\partial\mb D_{\epsilon}}\times[0,1]\stackrel{\sim}{\longrightarrow} \wh{\mc B}\,,
$$
are homeomorphisms which conjugate  the product foliations $\wh\F_{|\wh{\mc B}_{\partial \mc P}}\times [0,1]$ and
$\wh\F_{|\wh{\mc B}_{\partial\mb D_{\epsilon}}}\times [0,1]$ to the foliation  $\wh\F$, where $\wh{\mc B}_{\partial\mb D_{\epsilon}}=\wh\Psi^{-1}(\mc B_{\Gamma})\subset\wh\pi^{-1}(\partial\mb D_{\epsilon})$. The maps
$$
\wh R := \wh H(\cdot , 1) : \wh{\mc B}\longrightarrow \wh{\mc
B}_{\partial \mc P}\quad \hbox{and}\quad \wh R' := \wh H'(\cdot , 1)
: \wh{\mc B}\longrightarrow \wh{\mc B}_{\partial\mb D_{\epsilon}}
$$
are retractions by deformation inducing retractions by deformation on each leaf of $\wh\F$ lifting respectively $\wh r$ and
$$
    \wh r' :=\wh h'(\cdot, 1) : \mc P' \longrightarrow \partial \mb
D_\epsilon\,.
$$
Since the restriction of $\wh\Psi$ to $\wh{\mc B}\setminus\wh{\mc B}_{\partial\mb D_{\epsilon}}$ is a homeomorphism onto $\mc B_{D'}\setminus\mc B_{\Lambda}$, the map $\wh\Psi\circ\wh R\circ\wh\Psi^{-1}:\mc B_{D'}\setminus\mc B_{\Lambda}\to\mc B_{\Lambda}$ is well defined and it extends to a map $R:\mc B_{D'}\to\mc B_{\Lambda}$ by being the identity on $\mc B_{\Lambda}$. Indeed, the restriction of $\wh\Psi$ to each subset $\wh{\mc B}_k := \wh {\mc B}\cap \wh\pi
^{-1}\bigl(\bigl\{\frac{2\pi k}{4g} <\arg(z)<\frac{2\pi
(k+1)}{4g}\bigl\}\bigl)$ and $\wh{\mc B}'_k := \wh {\mc B}\cap \wh\pi_{D}
^{-1}\bigl(\bigl\{\arg(z)=\frac{2\pi k}{4g}\bigl\}\bigl)$,
$k=0,\ldots ,4g-1$, is a homeomorphism onto their image. Moreover $\wh R(\wh{\mc B}_k )= \wh{\mc B}_k \cap \wh {\mc
B}_{\partial \mc P}$, $\wh R(\wh{\mc B}'_k )= \wh{\mc B}'_k \cap \wh
{\mc B}_{\partial \mc P}$. Therefore the restriction of $\wh\Psi\circ \wh R\circ \wh \Psi^{-1}$ to each subset
$\wh \Psi(\wh{\mc B}_k)$, $\wh \Psi(\wh{\mc B}'_k)$ is well-defined and continuous. All these restrictions coincide with the identity map on $\mc B_{\Lambda}$ because $\wh R=\mr{Id}$ on $\wh\Psi^{-1}(\mc B_{\Lambda})=\wh{\mc B}_{\partial\mc P}$.
Thus $R:\mc B_{D'}\to\mc B_{\Lambda}$ is a retraction by deformation satisfying Properties (ii) and (iii) of the lemma. We shall see now that $R$ also satisfies Property (iv).

Let $\gamma:[0,1]\to\mc B_{D'}$, $\gamma(0),\gamma(1)\in\pi_{D}^{-1}(m_{\Gamma})$, $m_{\Gamma}\in\Gamma$, be a path homotopic to another path $\gamma_{1}$ lying on $\mc B_{\Gamma}$. It follows from (\ref{isopi1}) that we can take for $\pi_{D}\circ\gamma_{1}$ a power $\breve{\Gamma}^\nu $ of the simple parametrization $\breve{\Gamma }(t) :=
\Psi(\epsilon e^{2i\pi t})$, $t\in[0,1]$ of $\Gamma $ which satisfies  $r\circ \breve{\Gamma } = \mu $. Hence $\pi _D\circ R\circ
\gamma = r\circ \pi _D\circ \gamma = \mu ^\nu $.

Conversely, let $\gamma : [0,1] \rightarrow \mc
B_{D'}$, $\gamma (0)$, $\gamma (1)\in \pi _D^{-1}(m_\Gamma )$, be a path such that $\pi _D\circ R \circ \gamma = r\circ \pi _D\circ \gamma $  is homotopic to $\mu^{\nu}$. We consider a path $\xi$ contained in $\mc B_{\Gamma}$ having the same endpoints as $\gamma$ and such that $\pi_{D}\circ\xi\sim\breve{\Gamma}$. The loop $\delta:=\gamma\svee\xi^{-\nu}$ satisfy $[\pi _D\circ R\circ\delta]_\Lambda =0$. Consequently $R\circ\delta$ is homotopic in $\mc B_{\Lambda}$ to a loop lying on the fibre $\pi_{D}^{-1}(m_{\Lambda})$. Since $R$ is a retraction by deformation commuting to the projection $\pi_{D}$, we obtain that $\delta$ is homotopic inside $\mc B_{D'}$ to a loop $\delta_{1}$ lying on $\pi_{D}^{-1}(m_{\Gamma})$. Hence $\gamma$ is homotopic inside $\mc B_{D'}$ to the path $\delta_{1}\svee\xi^{-\nu}$ which is contained in $\mc B_{\Gamma}$.

Now, we shall prove Property (v) from the following assertion:
\begin{enumerate}[$(\star)$]
\item Let $\delta$ be a path lying on a leaf $L$ of $\F_{|\mc B_{\Lambda}}$ such that $\pi_{D}\circ\delta$ is homotopic to $\mu^{\nu}$. Then there exists a path $\chi$ homotopic to $\delta$ inside $L$ such that $\pi_{D}\circ\chi=\mu^{\nu}$.
\end{enumerate}

We can apply this property to the path $\delta:=R\circ\gamma$ because $\pi_{D}\circ R\circ\gamma$ is homotopic to $\mu^{\nu}$ for some $\nu\in\mb Z$ by hypothesis. Thus we obtain a path $\chi$ homotopic to $R\circ\gamma$ inside $L\cap \mc B_{\Lambda}$ such that $\pi_{D}\circ\chi=\mu^{\nu}$. By applying Lemma~\ref{sous} to it we get a continuous $\wh\Psi$-lift $\wh\chi$ lying on the leaf $\wh L=\wh\Psi^{-1}(L)$ of $\wh\F$. On the other hand, by using the foliated retraction $R$ we construct two paths $\xi_{0}$, $\xi_{1}:[0, 1]\to L$ such that   $\xi_{0}(0)=\gamma(0)$, $\xi_{0}(1)= R\circ \gamma (0)$, $\xi_{1}(0)= R\circ\gamma (1)$, $\xi_{1}(1)=\gamma(1)$ and
$$\gamma\sim_{L}\xi_{0}\svee(R\circ\gamma)\svee\xi_{1}\sim_{L}
\xi_{0}\svee\chi\svee\xi_{1}=\wh\Psi
\circ(\wh\xi_{0}\svee\wh\chi\svee\wh\xi_{1})$$
for the unique continuous $\wh\Psi$-lifts $\wh\xi_{0}$, $\wh\xi_{1}$ of
$\xi_{0}$ and $\xi_{1}$ respectively. Moreover $|\xi_0|$ and $|\xi_1|$ are contained in orbits of $\wh{\mc R}$ and clearly $\wh\xi_{0}\svee\wh\chi\svee\wh\xi_{1}$ is homotopic in $\wh L$ to $\wh R'\circ \wh\chi$. Then $\gamma $ is homotopic in $L$ to $\wh \Psi\circ \wh\chi$ which is contained  in $\mc B_{\Gamma }\cap L$.\\

In order to prove Assertion $(\star)$ we consider a path $\delta:[0,1]\to L$ satisfying the hypothesis of this assertion. Without loss of generality we can assume that $\delta$ is smooth and transverse to the fibre $\pi_{D}^{-1}(m_{\Lambda})$. We get a subdivision  $t_1=0<t_2<\cdots <t_{q'+1}=1$ of the interval  $[0,1] $, such that each curve $\delta([t_{j},t_{j+1}])$ is contained in a single block $\mc B_{\tau(j)}$. The endpoints $m_j := \delta (t_j)$
and $m_{j+1} := \delta (t_{j+1})$ of the path $\delta _j := \delta
_{|[t_j, t_{j+1}]}$ project by $\pi_{D}$ onto the point $m_{\Lambda}$ and the image of $\delta_{j}$ is the closed segment $L_{j}$ of the leaf $L$ (of real dimension one) delimited by the points $m_{j}$ and $m_{j+1}$. The projection of the path $\delta_{j}$ by $\pi_{D}$ is a loop based on $m_{\Lambda}$ with image $|\alpha_{\tau(j)}|$ or $|\beta_{\tau(j)}|$ depending on the parity of $\tau(j)$. Moreover, the equality $\pi_{D}\circ\delta_{j}(t)=m_{\Lambda}$ only holds for $t=t_{j}$ and $t=t_{j+1}$. Thus, if we assume that $\pi_{D}\circ\delta_{j}$ is not null-homotopic then there exists a homotopy inside $|\pi_{D}\circ\delta_{j}|$ between $\pi_{D}\circ\delta_{j}$ and one of the loops $\alpha _{\tau
(j)}$, $\alpha _{\tau (j)}^{-1}$, $\beta _{\tau (j)}$ or $\beta
_{\tau (j)}^{-1}$. Clearly this homotopy lifts to a homotopy inside $L_{j}$ between $\delta_{j}$ and a new path $\rho_{j}$. Finally, there exists $q\le q'$ such that $\delta$ is homotopic inside $L$ to the path
$$\rho := \rho _1\svee \cdots \svee \rho _{q}\,,\quad
\pi _D\circ \rho _j =: \mu _{\tau (j)} \in \{\, \alpha _{\tau
(j)}\,,\alpha _{\tau (j)}^{-1}\,,\beta _{\tau (j)}\,,\beta _{\tau
(j)}^{-1} \,\}\,.$$
Let us consider the word
$$ M(\delta ) := \mu _{\tau
(1)}\,\mu _{\tau (2)}\cdots \mu _{\tau (q)}$$
composed by the signs of the alphabet
$$\mc A:= \{\alpha _1, \alpha _1^{-1},\ldots,\alpha _g,
\alpha _g^{-1}, \beta _1, \beta _1^{-1},\ldots,\beta _g, \beta
_g^{-1}\}\,.$$
By a sequence of moves of type
\begin{equation}\label{suppression}
    u_1\cdots u_k \,v\, v^{-1}\,u_{k+1} \cdots u_N\;\rightarrow
\;u_1\cdots u_k\,u_{k+1} \cdots u_N\,,\qquad u_j,\; v \in \mc A\,,
\end{equation}
$$
    u_1\cdots u_k \,v\, v^{-1}\,u_{k+1} \cdots u_N\;\leftarrow
\;u_1\cdots u_k\,u_{k+1} \cdots u_N\,,\qquad u_j,\; v \in \mc A\,,
$$
we can transform the word $M(\delta)$ in a unique word $M(\delta)^{\mr{red}}$, called the \textbf{reduced word} associated to $M(\delta)$, which do not contain any sub-word of length two of type $v\,v^{-1}$, $v\in\mc A$. The uniqueness of $M(\delta)^{\mr{red}}$ follows from the solution of the word problem in a free group, cf. \cite{Magnus}. More precisely, every element of the free group
$$
\pi _1(\Lambda , m_\Lambda ) \,\iso\, \langle\dot{\alpha} _1, \ldots
\dot{\alpha} _g, \dot{\beta} _1, \ldots \dot{\beta} _g\;| \, - \rangle\,,
$$
can be written in a unique way in the form  $\dot{M} :=
\dot{u}_1\svee\cdots \svee \dot{u}_p$, where $M:= u_1\cdots u_p$,
$u_j\in \mc A$ is a reduced word. Now we use the hypothesis that $\pi _D\circ \delta $  is homotopic inside $\Lambda$ to a loop of type $\mu^{\nu}$. We have equality of reduced words:
$$ M(\delta
)^\mathrm{red} = \overline{\mu }^{\nu}\,,\quad \overline{\mu } := \alpha
_1\beta _1\alpha _1^{-1}
   \beta _1^{-1}\cdots \alpha _g\beta _g\alpha _g^{-1}\beta
   _g^{-1}\,.
$$
Notice that we pass from $M(\delta )$ to $M(\delta )^\mathrm{red}$ by a sequence of suppression moves of type (\ref{suppression})
$$
M_0 = M(\delta ) \rightarrow \cdots \rightarrow M_q = M(\delta
)^\mathrm{red}\,, \quad M_k = u_{k,\, 1}\cdots u_{k,\, n_k}\,,\quad
u_{k,\,j}\in \mc A\,.
$$
To finish the proof it suffices to remark that
\begin{enumerate}[-]
  \item to any suppression move we can associate a homotopy inside $\Lambda$ between the loops $\underline{M}_k := u_{k,\,
1}\svee\cdots\svee u_{k,\, n_k}$ and $\underline{M}_{k+1} :=
u_{k+1,\, 1}\svee\cdots\svee u_{k+1,\, n_{k+1}}$,
\item if one of the paths  $\rho _j$ composing  $\rho $ satisfies  $\mu _{\tau (j+1)}=
  \mu _{\tau (j+1)}^{-1}$, then there exists a homotopy inside $L$ between
 $\rho _j\svee\rho _{j+1}\svee \rho _{j+2}$ and a path
  $\wt \rho _{j+2}$ such that $\pi _D\circ \wt \rho _{j+2}= \mu _{\tau
  (j)}$.
\end{enumerate}
\end{dem}

\subsubsection{Checking the properties of Theorem~\ref{blocks}}
First of all, we will precise the construction of the foliated block $\mc B_{\eta}(A)$ associated to the genus fundamental block $A=D'$. In the construction made in the precedent section we take a conformal disk $T\subset\pi_{D}^{-1}(m_{\Lambda})$ of size small enough so that it is contained in the open $4$-Milnor tube $\mc T_{\eta}$. Since $\mc B_{\Gamma}$ is a subset of type multi-suspension we can apply to it the \emph{rabotage} procedure described in \cite[Definition 4.3.5]{MM} in order to obtain a subset $\mc R_{\Gamma}$ of $\mc B_{\Gamma}$ of suspension type such that the inclusion $\mc R_{\Gamma}\subset\mc B_{\Gamma}$ is rigid and verifies $\mc R_{\Gamma}\uco{\F}\mc B_{\Gamma}$. Then we define
$$\mc B_{\eta}(A):=(\mc B_{D'}\setminus\mc B_{\Gamma})\cup\mc R_{\Gamma}.$$
Clearly the inclusion  $\mc B_{\eta}(A)\subset\mc B_{D'}$ is $\partial$-rigid.\\

We begin checking the properties of the part (I) in Theorem~\ref{blocks}. In order to see the point (1) we consider the following commutative diagram induced by the natural inclusions:
$$\begin{array}{ccc}
\pi_{1}(\mc T_{\eta'}^*(\Lambda))& \longrightarrow & \pi_{1}(\mc T_{\eta'}(D'))\\
\downarrow & & \downarrow\\
\pi_{1}(\mc B_{\Lambda}) & \longrightarrow & \pi_{1}(\mc B_{D'})
\end{array}$$
Remark~\ref{top} implies that the first vertical arrow is an isomorphism. The bottom horizontal arrow is also an isomorphism because the map $R$ in Lemma~\ref{R} is a deformation retract. By lifting conveniently the retraction $r$ to $\mc T_{\eta'}(D')$ we see that the top horizontal arrow is also an isomorphism. The fourth arrow is also an isomorphism. Consequently the inclusion $\mc T_{\eta'}(D')\subset\mc B_{D'}$ is rigid. The inclusion  $\mc T_{\eta'}(D')\subset\mc B_{\eta}(A)$ is also rigid. The fact that it is also $\partial$-rigid follows immediately from the construction.

In order to show (2) we must prove Properties (B1)-(B4) of Definition~\ref{block}:
\begin{enumerate}[(B1)]
\item $\partial\mc B_{\eta}(A)$ is incompressible in $\mc B_{\eta}(A)$ because the inclusion  $\partial\mc T_{\eta'}^*(D')\subset \partial\mc B_{\eta}(A)$ is rigid and  $\partial\mc T_{\eta'}^*(D')$ is incompressible in $\mc T_{\eta'}^*(D')$ thanks to Proposition~\ref{BA}.
\item The boundary $\partial \mc B_{\eta}(A)$ is transverse to $\F$ because it is  of  suspension type.
\item Since $\partial\mc B_{\eta}(A)$ has been obtained by the \emph{rabotage} procedure from a multi-suspension type subset $\mc B_{\Gamma}$, in order to prove the $1$-$\F$-connected\-ness of $\partial\mc B_{\eta}(A)$ inside $\mc B_{\eta}(A)$ it suffices to show that  $\mc B_{\Gamma}\uco{\F}\mc B_{D'}$ because $\partial\mc B_{\eta}(A)\uco{\F}\mc B_{\Gamma}$. In order to prove this, we consider a leaf $L$ of $\mc B_{D'}$ and two paths $a:[0,1]\to \mc B_{\Gamma}$ and $b:[0,1]\to L$ which are homotopic in $\mc B_{D'}$ By point (3) of Lemma~\ref{R} we deduce that $[\pi_{D}\circ R\circ b]\in\langle\mu\rangle\subset\pi_{1}(\Lambda,m_{\Lambda})$. By applying point (4) of Lemma~\ref{R} we obtain a new path $c:[0,1]\to L\cap\mc B_{\Gamma}$ which is homotopic to $b$ inside $L$. By transitivity, $c$ is homotopic to $a$ in $\mc B_{D'}$. Since $|a|$ and $|b|$ are contained in $\mc B_{\Gamma}$ which is incompressible in $\mc B_{D'}$ we conclude that $a$ is homotopic to $c$ in $\mc B_{\Gamma}$.
\item After point (2) of Lemma~\ref{R} we know that every leaf $L$ of $\mc B_{\eta}(A)$ is a deformation retract of $L\cap\mc B_{\Lambda}$, which outside of the fibre $\pi_{D}^{-1}(m_{\Lambda})$ is a suspension type subset. We deduce that every leaf $L\cap\mc B_{\Lambda}$ of $\F_{|\mc B_{\Lambda}}$ is incompressible.
\end{enumerate}
Properties (3) and (4) of Part (I) are trivial because in this case $\mf n_{A}=1$. To see (5) we define first $\Upsilon_{D'}$ as the holonomic transport of $\pi_{D}^{-1}(m_{\Lambda})\cap \mc T_{\eta_{1}}^*(D')$ along the oriented segment joining the point $m_{\Lambda}$  to $m_{\Gamma}$. It is clear that $\Upsilon_{D'}\cap\mc B_{\eta}(A)$  is incompressible in $\mc B_{\eta}(A)$ and that  $Sat_{\F_{|\mc B_{\eta}(A)}}(\Upsilon_{D'},\mc B_{\eta}(A))=\mc B_{\eta}(A)$. On the other hand,  $\Upsilon_{D'}\cap\mc B_{\eta}(A)\uco{\F}\partial \mc B_{\eta}(A)\uco{\F}\mc B_{\eta}(A)$ because $\partial\mc B_{\eta}(A)$ is of suspension type.\\

To prove Part (II) of Theorem~\ref{blocks} we recall that if $A=D'$ is not an initial block then the holonomy transformation $h_{\Gamma}$ associated to $\Gamma$ is linearizable. Therefore, there exists a conformal disk $\Sigma\subset\pi_{D}^{-1}(m_{\Gamma})$ such that $h_{\Gamma}(\Sigma)\subset\Sigma$ or $h_{\Gamma}^{-1}(\Sigma)\subset\Sigma$. We define $\mc V_{1}=\mb V_{\Sigma,\Gamma}$   and we  begin the precedent construction with the conformal disk $T\subset\pi_{D}^{-1}(m_{\Lambda})$ obtained by holonomic transport of $\mc V_{1}\cap\pi_{D}^{-1}(m_{\Gamma})$
 along the segment joining $m_{\Gamma}$ to $m_{\Lambda}$, choosing $m_{\Lambda}$ as the breaking point of $\mc V_{1}$.
Indeed, from this choice the precedent construction shows that   $\mc V_{1}$ is of suspension type and   $\partial\mc B_{\eta}(A)=\mc V_{1}\loar\mc V$. Thus, we have proved point (3'). Since
$\mf n_{A}=1$ we can take $\rho_{A}(c)=c$ to obtain trivially (4'). \\

Finally, by definition a genus block is not a breaking block, so Part (III) do not apply in this case.

\subsection{Dicritical type foliated adapted block}\label{dicritical}
We fix a fundamental block $A\subset\mc D$ associated to a dicritical
 irreducible component $D$ of $\F$ of genus $g$ and valence $\mf n_{A}\ge 1$, given by Definition~\ref{defblocks}.
Condition (c) in Definition~\ref{adapted} implies that there are no dead branches adjacent to $D$.

Each connected component of $\partial A$ is the boundary of a closed disk $D_{s_{i}}^{(i)}$ contained in an adjacent component $D^{(i)}$ of $\mc D$ and $D\cap D^{(i)}=\{s_{i}\}$, $i=1,\ldots,\mf n_{A}$.
Let $\mc V_{i}$ be the given suspension sets over $\partial D^{(i)}_{s_{i}}$.
Since the holonomy of $\partial D^{(i)}_{s_{i}}$ is the identity we can choose a saturated subset $\mc V_{i}'\subset\mc V_{i}$ having $\mf c(\mc V_{i}')\le \varepsilon$, i.e. satisfying Condition (4''). The saturation condition of $\mc V_{i}'$ inside $\mc V_{i}$ implies that each $\mc V_{i}'$ is of suspension type and satisfies Property~(3'').

Next we define  $\mc B_{\mc V_{i}'}$ as the saturation of $\mc V_{i}'\subset\mc V_{i}$ by $\F$ inside $\pi_{D^{(i)}}^{-1}({D}_{s_{i}}^{(i)})$, where $\pi_{D^{(i)}}$ is the Hopf fibration over the component $D_{i}$. We put $\mc B_{\mc V'}:=\bigcup\limits_{i=1}^{\mf n_{A}}\mc B_{\mc V_{i}'}$ and we finally define
$$
\mc B_{\eta}(A):=\left(\pi_{D}^{-1}\left(D\setminus\mc B_{\mc V'}\right)\cap \mc T_{\eta}^*(A)\right)\cup\left(\mc B_{\mc V'}\setminus D\right).$$
Recall that we have choose Hopf fibration $\pi_{D}$ to be constant along the leaves of $\F|_{W_{D}}$, see Point (c) of Remark~\ref{obs14}.

In order to prove Part (III) of Theorem~\ref{blocks} it suffices to show Assertions~(1) and (2) of Part (I) because
 $\partial \mc B_{\eta}(A)=\bigcup\limits_{i=1}^{\mf n_{A}}\mc V_{i}'$ is automatically satisfied by construction.
It is clear that $\overline{\mc B}_{\eta}(A)$ is a tubular neighborhood of $A$ so that $\mc B_{\eta}(A)$ contains $\mc T_{\eta'}^{*}(A)$ for $\eta'>0$ small enough. This inclusion is $\partial$-rigid because the inclusions $\partial\mc B_{\eta}(A)\subset\partial\mc T_{\eta}^{*}(A)$ and $\partial\mc T_{\eta'}^{*}(A)\subset\partial\mc T_{\eta}^{*}(A)$ are rigid and, on the other hand, we can easily see that $\mc T_{\eta'}(A)\subset \mc B_{\eta}(A)$ is a retract by deformation and consequently this last inclusion is also rigid.

To prove that $\mc B_{\eta}(A)$ is a $\F$-adapted block it suffices to observe the following assertions  concerning properties (B1)-(B4) of Definition~\ref{block}:
\begin{enumerate}[(B1)]
\item By using Proposition~\ref{BA}, $$\pi_{1}(\partial\mc B_{\eta}(A))\cong\pi_{1}(\partial\mc T_{\eta}^*(A))\hookrightarrow\pi_{1}(\mc T_{\eta}^*(A))\cong\pi_{1}(\mc B_{\eta}(A))$$ after the $\partial$-rigidity of $\mc B_{\eta}(A)\subset\mc T_{\eta}^*(A) $
that we have seen before.
\item The boundary $\partial \mc B_{\eta}(A)=\bigcup\limits_{i=1}^{\mf n_{A}}\mc V_{i}'$ is a suspension type subset over  $\partial A$ and consequently it is  transverse to $\F$.
\item Each connected component of
$\bigcup\limits_{L\in\F}(L\cap\partial\mc B_{\eta}(A))$ is
diffeomorphic to the product $\mb D^*\times\mb D^*$ endowed with the horizontal foliation. Consequently, we have that  $\partial\mc B_{\eta}(A)\uco{\F}\mc B_{\eta}(A)$.
\item Every leaf $L$ of $\F_{|\mc B_{\eta}(A)}$ is diffeomorphic to $\mb D^{*}$ and a generator of $\pi_{1}(L)$ is sent to the element $c$ contained in the center of the group $\pi_{1}(\mc B_{\eta}(A))$ which is isomorphic to the direct sum of $\mb Z c$ and a free group of rank
$2g+\mf n_{A}-1$.
\end{enumerate}

\section{Proofs of the main results}\label{sec4}

\subsection{Proof of Theorem~\ref{main}}
 Recall that the break graph associated to $(\F,\mc D)$ was obtained by considering the complement of the breaking elements $\mc R$ inside $\mc G_{\mc D}$, see Introduction.
We consider the graph $\check{\mc G}$ obtained by eliminating
the part of the break graph associated to $(\F,\mc D)$ corresponding to the dead branches of $\mc D$.
After Condition~(e) in Definition~\ref{adapted} and Hypothesis (G) on $(\F,\mc D)$, each connected component $\Lambda$ of $\check{\mc G}$ is a tree with at most one vertex corresponding to an initial component  $C\subset\mc C$.
 We apply Part (I) of Theorem~\ref{blocks} to the initial block $A_{C}$ associated to $C$.
If $\Lambda$ does not contain any initial element then we begin the construction from a fundamental block  $A$ associated to an  arbitrary  element of $\Lambda$ by applying Part (II). To do that we choose some suspension type initial boundary $\mc V$ with $\mf c(\mc V)$ small enough.
Since the fundamental blocks $A\neq A_{C}$ corresponding to elements of $\Lambda$ are not initial blocks we can apply to them by adjacency order Part (II) of Theorem~\ref{blocks} from the suspension type boundary obtained in the precedent step.
Since $\Lambda$ is finite this procedure stops. In this way we obtain a $\F$-adapted block for each fundamental block of $\mc D$
except for the breaking blocks of  $\mc D$.
The size and roughness of the boundary of the $\F$-adapted block obtained at each step of this inductive process is controlled by those of the block constructed in the precedent step.
If we choose the size and roughness  sufficiently small  at the beginning then we have finite roughness at each step of the induction, see \cite[\S3.2]{MM} for more details.
We make the boundary assembly of these $\F$-adapted blocks obtaining a connected subset $\mc B_{\eta}(\Lambda)$ of $\mc T_{\eta}^*(\Lambda)$ for each connected component $\Lambda$ of $\check{\mc G}$.

In order to make the boundary assembly of all these sets  $\mc B_{\eta}(\Lambda)$ we need also to consider $\F$-adapted blocks associated to the breaking elements $\rho\in\mc R$ adjacent to two connected components  $\Lambda$ and $\Lambda'$ of {$\check{\mc G}$},  which we construct from the suspension type boundaries of  $\mc B_{\eta}(\Lambda)$ and $\mc B_{\eta}(\Lambda')$
by using Part (III) of Theorem~\ref{blocks}. Notice that the case  $\Lambda=\Lambda'$ is not excluded. In fact, this situation could happen when $\partial \mc B_{\eta}(\Lambda)$ is not connected.

In this way we obtain a foliated boundary assembly
$$\mc B_{\eta}=\bigcup\limits_{\Lambda\subset{\check{\mc G}}}\mc B_{\eta}(\Lambda)\cup\bigcup\limits_{\rho\in\mc R}\mc B_{\eta}(\rho)\subset\mc T_{\eta}^*.$$
We take  $U_{1}=E(\mc B_{\eta_{1}})\cup\cD$.
There exists $\eta_{2}>0$ such that $\mc T_{\eta_{2}}^*\subset\mc B_{\eta_{1}}$ and we define  $U_{2}=E(\mc B_{\eta_{2}})\cup\cD\subset U_{1}$.
By induction we construct a decreasing a sequence  $(\eta_{n})$
tending to zero such that  $U_{n}:=E(\mc B_{\eta_{n}})\cup\cD\subset U_{n-1}$. Put
 $\Upsilon:=\sqcup_{A}\Upsilon_{A}$, where $A$ varies in the set of fundamental blocks of  $\mc D$ which are not breaking blocks.
To finish it suffices to remark the validity of the following assertions:
\begin{enumerate}[(i)]
\item The inclusion $U_{n+1}^*\subset U_{n}^*$ is  rigid by Remark~\ref{rigid}, Corollary~\ref{rig2} and Property  (1) of Theorem~\ref{blocks}.
\item Every leaf  $L$ of $\underline{\F}_{|U_{n}^*}$ is incompressible after Property (2) of Theorem~\ref{blocks}  by using Localization Theorem~\ref{loc}.
\item Thanks to Property (5) of Theorem~\ref{blocks} each irreducible component of $Y_{n}^{*}$ is incompressible in the corresponding $\F$-adapted block, which is incompressible in  $U_{n}^{*}$ by Localization Theorem~\ref{loc}. Hence
$Y_{n}^{*}$ is incompressible in $U_{n}^{*}$.
Let $\Omega$ be the union of all the $\F$-adapted blocks associated to non-breaking fundamental blocks of $\mc D$. Thanks to Property (5) in Theorem~\ref{blocks} we have $\mr{Sat}_{\F}(\Upsilon\cap \mc B_{\eta},\Omega)=\Omega$. Clearly, the connected components of $\partial\Omega$ are exactly the connected components of the boundary of all $\F$-adapted blocks associated to  breaking fundamental blocks of $\mc D$. Finally,  for each $\F$-adapted block $B$ associated to a fundamental breaking block $A$ of $\mc D$ we have
that $B\setminus\mr{Sat}_{\F}(\partial B,B)$ is a nodal or dicritical separator according to whether $A$ is a dicritical block or a singularity block (necessarily associated to a nodal singularity).
\item Property (iv) of Theorem~\ref{main} is equivalent to the relation $Y_{n}^{*}\underset{\scriptscriptstyle\uF}\looparrowright U_{n}^{*}$. This  follows from
 $\sqcup_{A}\Upsilon_{A}\uco{\F}\mc B_{\eta}$ because
$\Upsilon_{A}\uco{\F}\mc B_{\eta}(A)$ by Theorem~\ref{blocks},
$\mc B_{\eta}(A)\uco{\F}\mc B_{\eta}$ by Localization
Theorem~\ref{loc} and the transitivity of the relation $\uco{\F}$.
\item Let $U_{n}$ be one of the open sets that we have constructed. We still denote by $\widetilde{\F}_{U_n}$ the the pull-back
  by the universal covering $q_{U_n}:\widetilde{U_n}\to U_n^*$ of the foliation $\uF$ restricted to $U_n^\ast$ and we denote  $\wt{\mc Q}_{U_n}$ its leaf space.
It is easy to see that the open subset of $\wt{\mc Q}_{U_n}$ corresponding to leaves of $\wt\F_{U_n}$ projecting onto an open fixed separator has a natural structure of Hausdorff one-dimensional complex manifold. To obtain a complete holomorphic atlas on $\wt{\mc Q}_{U_n}$ we proceed as follows.
From the fact that  $Y_n^\ast$ is incompressible and  $1$-$\uF$-connected in $U_n^*$
follows that each connected component  $\wt Y_{\alpha}\cong\mb D$ of $q_{U_n}^{-1}(Y_n^\ast)$ intersects every leaf of $\wt\F_{U_n}$ in at most one point.
Consequently, the open canonical maps $\tau_{\alpha}:\wt Y_{\alpha}\to\wt{Q}_{U_n}$, sending each point $p\in\wt  Y_{\alpha}$ to the leaf $L_{p}$ of $\wt\F_{U_n}$ passing through  $p$, are injective.
Hence the inverse maps $\tau_{\alpha}^{-1}$ are holomorphic charts on $\wt{\mc Q}_{U_n}$.
We achieve the proof by noting that $U_n\setminus Sat_{\underline{\F}}(Y_n^\ast,U_n^*)$ is a disjoint finite union of nodal and dicritical separators and that the transition functions induce the holonomy pseudo-group of $\uF$; hence they are holomorphic.
\end{enumerate}

\subsection{Proof of Corollary~\ref{B}}
We must check that the total transform of $\mc Z$ by the minimal reduction map $\pi$ of $\F$ is an adapted divisor of $(\pi^{*}\F,\pi^{-1}(0))$. Conditions~(a) and~(b) of Definition~\ref{adapted} are obviously fulfilled. Condition~(d) can not occur by the existence of local separatrices.

To prove Condition~(c) notice that on a neighborhood of a dead branch with branching point lying on a dicritical component all the leaves are compact. This situation can not happen because it does not exist compact analytic curves in $\C^{2}$.

To prove Condition~(e) we will use the well-known fact that the total divisor of the desingularisation of a germ of curve $(X,0)$ contains at most one irreducible component of the exceptional divisor adjacent to at least two dead branches.
We take for $X$ the union of the isolated separatrices of $\F$ and two non-isolated separatrices for each dicritical component of $\pi^{*}\F$.
We can easily check that the minimal desingularisation morphism of $X$ coincide with $\pi$, see \cite[Theorem~2]{CSL}. Then there exists at most one initial component of $(\pi^{*}\F,\pi^{-1}(0))$.

\subsection{Proof of Theorem~\ref{main2}}
As we have already point out  in the introduction, the equivalences $(1)\Leftrightarrow(1')$ and $(2)\Leftrightarrow(2')$ follow from  the main result of \cite{Rebelo} thanks to Condition~(R).
Since the implication $(2)\Rightarrow(1)$ is obvious it only remains to prove  implications $(1)\Rightarrow(3)\Rightarrow(4)\Rightarrow(2)$. To do that we will make a strong use of the notions and statements introduced in \cite{monodromy}.\\

\noindent $\mathbf{(4)\Rightarrow(2)}$\textbf{:}
Conditions~(a) and (b) in~(4) imply that if $D\subset\mc D$ is a dicritical component of $(\F,\cD)$ then $\varphi(D)$ is a dicritical component of $(\F',\cD')$ and if $s\in\cD$ is a nodal singularity of $\F$ then $\varphi(s)$ is a nodal singularity of $\F'$. Consequently $\varphi$ sends connected components of the cut divisor $\cD^{\mr{cut}}$ defined in the introduction into connected components of $\cD'^{\mr{cut}}$.
On the other hand, by assumption $(\psi,\wt\psi,h):=(\varphi_{|\Sigma},\wt\varphi_{|\wt\Sigma},h)$ is a realization of the $\mc S$-conjugation $(\varphi,\wt\varphi,h)$ between the monodromies $\mf m^{\uF}$ and $\mf m^{\uF'}$ over $\mc S$-collections of transversals $(\Sigma,\Sigma')$ in the sense of \cite[Definition~3.6.1]{monodromy}. Moreover, by definition it satisfies trivially the additional condition
\begin{equation*}
\wt\psi_{\bullet}=\wt\varphi_{\bullet}:\pi_{0}(\wt\Sigma)\to\pi_{0}(\wt\Sigma')
\end{equation*}
required in the Extension Lemma of \cite[Lemma~8.3.2]{monodromy}, whose proof is also valid for genus blocks. If $\cD^{\mr{cut}}$ is not a tree we choose singularity blocks $B_{\alpha}$ such that $\cD^{\mr{tree}}:=\cD^{\mr{cut}}\setminus\bigcup B_{\alpha}$ does not contain any cycle of components.
Let $B_{\alpha}'$ be the singularity block of $\cD'^{\mr{cut}}$ corresponding to $B_{\alpha}$ by $\varphi$ and put $\cD'^{\mr{tree}}:=\cD'^{\mr{cut}}\setminus\bigcup B_{\alpha}'$.
By applying iteratively the Extension Lemma beginning by $(\psi,\wt\psi,h)$ we obtain a realization $(\psi^{0},\wt\psi^{0},h)$  of $(\varphi,\wt\varphi,h)$ over a union $W$ of foliated adapted blocks covering $\cD^{\mr{tree}}$ and $\cD'^{\mr{tree}}$.

Now we fix
 transversal disks  $\Upsilon,\Theta$  to the local separatrices associated to the singularity block $B_{\alpha}$
contained in the boundary of $W$.
 Extension Lemma implies that $\psi^{0}$ is excellent and that $\psi^{0}$ and $\varphi$ coincide over $\cD$. Consequently,
$\Upsilon':=\psi^{0}(\Upsilon)=\varphi(\Upsilon)$ and
$\Theta':=\psi^{0}(\Theta)=\varphi(\Theta)$ are transversal disks to the local separatrices associated to the singularity block $B_{\alpha}'$.
Let $(\psi^{1},\wt\psi^{1},h)$ be the restriction of the realization $(\psi^{0},\wt\psi^{0},h)$ to $\Upsilon$.

Applying again Extension Lemma to the realization  $(\psi^{0}_{|\Theta},\wt\psi^{0}_{|\wt\Theta},h)$ for the block $B_{\alpha}$ we obtain a new realization
whose restriction $(\psi^{2},\wt\psi^{2},h)$ to $\Upsilon$ satisfies $\psi^{2}(\Upsilon)=\Upsilon'$,
 $$\wt\psi^{1}_{\bullet}=\wt\varphi_{\bullet}=\wt\psi^{2}_{\bullet}:\pi_{0}(\wt\Upsilon)\to \pi_{0}(\wt\Upsilon'),$$ and
the commutativity of the following diagrams
$$\begin{array}{ccc}
\wt\Upsilon_{\alpha} & \hookrightarrow & \mc Q^{\uF}\\
\wt\psi^{i}\downarrow\hphantom{\wt\psi^{i}} & & \hphantom{h}\downarrow h\\
\wt\Upsilon'_{\wt\varphi_{\bullet}(\alpha)} & \hookrightarrow & \mc Q^{\uF'}
\end{array}$$
for all $\alpha\in\pi_{0}(\wt\Upsilon)$ and $i=1,2$.
Since the horizontal arrows of these diagrams are monomorphisms we deduce that $(\psi^{1},\wt\psi^{1},h)=(\psi^{2},\wt\psi^{2},h)$. Consequently, we can glue these realizations to obtain a new realization $(\Psi,\wt\Psi,h)$ defined in a union of adapted foliated blocks covering $\cD^{\mr{cut}}$.

Finally, it only remains to extend $\Psi$ to the dicritical components and the nodal singularities in order to obtain a global realization of $(\varphi,\wt\varphi,h)$ which will be the desired $\mc S$-transversely holomorphic conjugation between $(\F,\cD)$ and $(\F',\cD')$. In fact, the extension to nodal singularities has been described in \cite[\S8.5]{monodromy}.

Now we fix  dicritical  components $D\subset\cD$ and $D':=\varphi(D)\subset\cD'$.
On neighborhoods of  these components the foliations $\F$ and $\F'$ are disk fibrations.
Because $D$ and $D'$ have the same self-intersection number,
we can identify two tubular neighborhoods of $D$ and $D'$ endowed with the restriction of the foliations $\F$ and $\F'$ with a tubular neighborhood of the zero section of the normal bundle of $D$ in $M$ endowed with the natural fibration.
Thus, we can consider the realization to be extended as a map from a disjoint union $K$ of closed disks contained in $D$ to $\mr{Aut}_{0}(\mb D,0)$. We can extend it to a union $K'$ of bigger disks containing $K$, being a constant automorphism of the fibres  over $\partial K'$ and consequently to the whole dicritical component $D$ using the connectedness of $\mr{Aut}_{0}(\mb D,0)$.

\medskip

\noindent $\mathbf{(1)\Rightarrow(3)}$\textbf{:} Let $g:(\underline{U},\uD)\stackrel{\sim}{\to}(\underline{U'},\uD')$ be a $\mc S$-transversely holomorphic conjugation between $(\uF,\uD)$ and $(\uF',\uD')$ and $\wt g:\wt U\to\wt U'$ a lifting to the universal coverings of $\underline{U}\setminus\uD$ and $\underline{U}'\setminus\uD'$. By  \cite[Remark~3.6.2]{monodromy} there exists a $\mc S$-$\proan$ isomorphism $h:\mc Q^{\uF}\to\mc Q^{\uF'}$ such that $(g,\wt g,h)$ is a $\mc S$-conjugation between the monodromies $\mf m^{\uF}$ and $\mf m^{\uF'}$.
Consider $\Sigma$ a $\mc S$-collection of transversals  for $\uF$ and  $\uD^{+}$ a $\uD$-extended divisor. Using
\cite[Proposition~3.6.4]{monodromy} and by  composing $g$ by a suitable $\uF'$-isotopy $\Theta_{t}$, having support on a neighborhood $W'$ of $g(\Sigma)$, we  obtain an homeomorphism $\uphi:=\Theta_1\circ g$ such that  $\Sigma':=\uphi(\Sigma)$ is a $\mc S$-collection of transversals for $\uF'$ and $\uD'$. Now we choose $\uD'^{+}$ as $\uphi(\uD^{+})$.
On the universal covering $\wt U$ we also consider the lifting $\wt\uphi$ of $\uphi$  which coincides with $\wt g$ on the complementary of $\wt W'$. Again by the same proposition, we see that $(\uphi,\wt\uphi,h)$ is a $\mc S$-conjugation of the monodromies realized over the $\mc S$-collections of transversals $\Sigma$ and $\Sigma'$. It remains to check properties (a) and (b) of Point (3).
First remark that  $\uphi$ maps isolated separatrices of $\uF$ into isolated separatrices of $\uF'$ because we have the following topological characterization:\\
\begin{enumerate}[]
\item{\it $S$ is a non-isolated separatrix if and only if there is a family $\{S_{j}\}_{j\in\mb N}$ of pairwise disjoint separatrices such that every $i,j\in\mb N$ we have that $S_{i}$ is topologically conjugated to $S$ and
 $S_{i}\cup S_{j}$ is topologically conjugated to $S\cup S_{i}$.}\\
\end{enumerate}
We deduce that $\uD'^{+}$ is a $\uD'$-extended divisor. The last assertion of Condition~(a) is trivially satisfied by the topological conjugation $\uphi$.
In~(b) equality of Camacho-Sad indices follows from Theorem~\ref{rosas} of R.~Rosas if $D$ is a nodal separatrix of $\uF$. Otherwise, $\uphi$ is transversely holomorphic in a neighborhood of $D$ and the desired equality is proved in \cite[\S7.2]{monodromy}.

\medskip

\noindent $\mathbf{(3)\Rightarrow(4)}$\textbf{:} We apply the following result which will be proved later.

\begin{lema}\label{marking}
Under the hypothesis of Point (3) there exists a germ of homeomorphism $\varphi:(M,\mc D)\to (M',\mc D')$ sending the strict transform of $\uD^{+}$ into the strict transform of $\uD'^{+}$ and a there is a lift $\wt\varphi$ of $\varphi$ to the universal coverings of the complementaries of $\cD$ and $\cD'$ satisfying the following properties:
\begin{enumerate}[(i)]
\item\label{i} at each singular point  of $\uD^{+}$ the actions of $\varphi$ and $\uphi$ on the set of local irreducible components of $\uD^{+}$ coincide;
\item\label{ii} $\varphi_{|\Sigma}=\uphi_{|\Sigma}$ and $\wt\varphi_{\wt\Sigma}=\wt\uphi_{|\wt\Sigma}$;
\item\label{iii} $\wt\varphi_{*}=\wt\uphi_{*}:\Gamma\to\Gamma'$;
\item\label{0} $\varphi$ is excellent.
\end{enumerate}
\end{lema}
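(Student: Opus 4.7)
The plan is to build $\varphi$ locally and then glue, using the fact that each of the required properties is essentially a compatibility condition that can be verified piece by piece. The construction will naturally split according to the stratification of $\mc D$ into its singular locus $\mr{Sing}(\mc D)$, the regular parts of each component (which carry smooth disk fibrations $\pi_i$), and the transversals $\Sigma$. I expect to produce $\varphi$ first on a neighborhood of $\mr{Sing}(\mc D)$ as a biholomorphism, then extend over the regular part of each component as a fibration conjugation, choosing the extension so that it agrees with $\uphi$ on $\Sigma$; the lift $\wt\varphi$ will be obtained from $\wt\uphi$ by ensuring agreement over $\wt\Sigma$.

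For the local construction near each singular point $s\in\mr{Sing}(\mc D)$, I would use the normal forms listed in Section~\ref{milnor} for linearizable, resonant and dicritical corners. By hypothesis~(b) of~(3) and Theorem~\ref{rosas}, the Camacho--Sad indices of $\F$ at $s$ and of $\F'$ at $\uphi(s)$ (and the action of $\uphi$ on the local invariant components prescribed by~(a) together with condition~(i)) coincide, so a holomorphic conjugation exists, unique up to the isotropy group of the normal form, which is a diagonal action. I would fix this remaining ambiguity by requiring that the local conjugation agrees, on the nearest branch of $\Sigma$ (whose existence on each component of $\cD^{\mr{cut}}$ is ensured by the defining property of an $\mc S$-collection), with $\uphi$ at the intersection point with a transversal in $\Sigma$. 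The map so defined is automatically excellent near $s$ since it is holomorphic, and condition~(i) holds by construction.

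Away from $\mr{Sing}(\mc D)$, each component $D\subset\mc D$ carries the smooth disk fibration $\pi_D$ from Lemma~\ref{lema1}, and similarly $\pi_{\uphi(D)}'$ on the target. On the closure of $D\setminus\bigcup D_{s}$ I would define a homeomorphism $\varphi_D\colon D\to\uphi(D)$ as follows. If $D$ is the strict transform of a component of $\uD$, set $\varphi_D = \uphi\circ E|_D$ composed with $(E')^{-1}$; this is well defined because $E$ is biholomorphic off $\mc E$ and $\uphi$ sends $\uD^+$ to $\uD'^+$ by hypothesis. If $D\subset\mc E$, choose any homeomorphism $\varphi_D\colon D\to\uphi(D)$ extending the values already fixed at the singular points of $\mc D\cap D$; such an extension exists since genera of corresponding components match (these are topological invariants preserved by $\uphi$ via the monodromy isomorphism). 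Then lift $\varphi_D$ to a fibration conjugation on a tubular neighborhood, using $\pi_D$ and $\pi_{\uphi(D)}'$ and the value of $\varphi$ on the relevant branches of $\Sigma$ to rigidify the fibre identifications, interpolating smoothly to the holomorphic pieces constructed near each $s\in\mr{Sing}(\mc D)\cap D$ via a collar and the $C^\infty$ contractibility of the relevant isotropy groups. This yields condition~(\ref{0}) by construction, and (\ref{ii}) holds because the fibration pieces have been forced to coincide with $\uphi$ on $\Sigma$.

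The main technical obstacle I anticipate is the verification of (\ref{iii}), namely $\wt\varphi_\ast=\wt\uphi_\ast$ as group homomorphisms $\Gamma\to\Gamma'$. The general element of $\Gamma$ can be represented by a loop in $\uU\setminus\uD$ which, up to homotopy, decomposes into segments lying in tubular neighborhoods of the components of $\cD^{\mr{cut}}$ and meridional arcs crossing from one to the next. By the defining property of an $\mc S$-collection of transversals, each such piece is homotopic, rel $\Sigma$, to a concatenation of meridian loops around components of $\uD$ based at $\Sigma$ together with a path in $\Sigma$. Since $\varphi=\uphi$ on $\Sigma$ and both send meridians to meridians of the corresponding components with the right orientation (condition~(i) plus the fact that each induces an isomorphism on local $\pi_1$), one checks that $\wt\varphi_\ast$ and $\wt\uphi_\ast$ coincide on a generating set, hence on all of $\Gamma$. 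This reduces the problem to a careful bookkeeping of how loops based at $\Sigma$ decompose, which follows the same scheme as in \cite[\S8]{monodromy}, suitably adapted to account for the dicritical components using the $\uD^+$-extension in (a).
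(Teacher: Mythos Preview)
Your approach has a genuine gap, and it differs from the paper's in a way that matters.

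The paper does \emph{not} build $\varphi$ from scratch out of local models. Instead it starts from $\uphi$ itself, restricts to the closed $3$-Milnor tube $\mc M$ (a graph $3$-manifold), and deforms $\uphi_{|\mc M}$ through a homotopy to a piece-adapted homeomorphism $\psi':\mc M\to\mc M'$, using the machinery of \cite[\S4]{marquage} together with Waldhausen's theorem. This piece-adapted $\psi'$ is then thickened to an excellent $\Psi:\mc T\to\mc T'$. Because $\Psi$ is obtained by homotoping $\uphi$, the equality $\wt\Psi_{*}=\wt\uphi_{*}$ is immediate from covering-space theory; what remains is to adjust the \emph{lift} on $\wt\Sigma$, and this is done by composing with explicit Dehn twists as in \cite[\S8.4]{monodromy}.

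Your route reverses the logic: you assemble $\varphi$ from local holomorphic germs near $\mr{Sing}(\mc D)$ and fibration conjugations elsewhere, and only afterwards try to check~(\ref{iii}). There are two problems. First, for $D\subset\mc E$ you write ``$\uphi(D)$'' and ``choose any homeomorphism $\varphi_{D}:D\to\uphi(D)$'', but $\uphi$ is defined on $\uM$, not on $M$, so it does not act on exceptional components at all; the correspondence between components of $\mc E$ and $\mc E'$ is precisely what the graph-manifold/JSJ argument of \cite{marquage} establishes, and you cannot simply assume it. Second, even granting the matching of components, your verification of~(\ref{iii}) does not go through: agreeing with $\uphi$ on $\Sigma$ and sending meridians to the correct meridians only pins down their images up to conjugacy, and the loops $a_i,b_i,d_j$ coming from genus and valence of the components of $\mc C$ are not controlled at all by your data. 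Two excellent homeomorphisms satisfying your local constraints can differ by Dehn twists along the boundary tori of the pieces, and these act nontrivially on $\Gamma$. This is exactly the obstruction the paper neutralizes by working with a global homotopy of $\uphi$ rather than a local reconstruction; the Dehn twists that do appear in the paper's argument are used in the \emph{opposite} direction, to fix the lift on $\pi_0(\wt\Sigma)$ once $\wt\Psi_{*}=\wt\uphi_{*}$ is already secured.

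A minor point: your local step invokes equality of Camacho--Sad indices to produce a ``holomorphic conjugation'' near each singularity. If you mean a conjugation of the foliations, this is false for resonant singularities (there are Martinet--Ramis moduli); if you only mean a biholomorphism of the normal-crossing germs $(\mb D^2,\{xy=0\})$, that always exists and the Camacho--Sad hypothesis is irrelevant there. Either way this step needs clarification, but it is secondary to the structural gap above.
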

Properties (\ref{ii}) et (\ref{iii}) trivially imply that $(\varphi,\wt\varphi,h)$ is a $\mc S$-conjugation between the monodromies $\mf m^{\F}$ and $\mf m^{\F'}$ realized over the $\mc S$-collections of transversals $\Sigma$ and $\Sigma'$.
From property (\ref{i}) easily follows Condition (a) of Point (4) because the strict transforms of $\overline{\uD^{+}\setminus\uD}$ and $\overline{\uD'^{+}\setminus\uD}$ allows to identify the dicritical components of $(\F,\cD)$ and $(\F',\cD')$.
Condition (b) of (4) follows from Condition (b) of (3) for local separatrices $D\subset\cD$ which are not contained in the exceptional divisor $\mc E$ of $E:M\to\uM$.
Since the dual graph of $\mc E$ is a  disjoint union of trees we can apply the same argument of \cite[\S7.3]{monodromy} to the $\mc F$-invariant part of $\mc D$ in order to obtain the equalities of all Camacho-Sad indices corresponding by $\varphi$ from those of the local separatrices of $\uF$ and $\uF'$. Finally (\ref{0}) gives (c) in Point (4).

\medskip

\begin{proof}[Proof of Lemma~\ref{marking}]
Following the notations of Section~\ref{milnor},
for $0<\eta\ll\eta'\ll\varepsilon\ll 1$ we consider an open $4$-Milnor tube $\mc T_{\eta}$ (resp. $\mc T_{\eta'}'$) associated to the divisor $\cD^{+}:=E^{-1}(\uD^{+})$ (resp. $\cD'^{+}:=E'^{-1}(\uD'^{+})$) and we denote by $\mc T$ (resp. $\mc T'$)  the image by $E$ (resp. $E'$) of its closure in the neighborhood $\overline{W}$ (resp. $\overline{W}'$) considered in Lemma~\ref{lema1}. It is worth to notice that the boundary of $\mc T$ is constituted by the closed $3$-Milnor tube $\mc M=E(\mc M_{\eta})$ and a finite union of solid tori whose boundaries are the connected components of  $\partial \mc M$.
The same property holds for $\mc T'$ and $\mc M'$. In the neighborhood of each singular point $s$ of $\uD^{+}$ (resp. $\uD'^{+}$) we consider an euclidian metric given by holomorphic coordinates. The boundaries of the closed balls $B(s,r)$ centered at $s$ with radius $r$ are transverse to $\mc M$ if $0<r\le 2\varepsilon$. We define a collar piece of $\mc T$ or $\mc M$ as
the intersection of $\overline{B(s,2\varepsilon)\setminus B(s,\varepsilon)}$ with $\mc T$ or $\mc M$. The connected components of the adherence of the complementary of the collar pieces of $\mc T$ or $\mc M$ are called essential pieces of $\mc T$ or $\mc M$. A continuous map between $\mc M$ and $\mc M'$ or $\mc T$ and $\mc T'$ will be called piece-adapted if the image of a piece is contained in a piece and the image of the boundary of a piece is contained in the boundary of a piece.
\\

\noindent\textit{First step. }
Without loss of generality we can assume that $\uphi(\mc T)\subset\mc T'$ and that any essential piece $\mc T_{s}$ of $\mc T$ containing a singular point $s$ of $\uD^{+}$ is mapped into an essential piece $\mc T'_{s'} $ of $\mc T$ containing also a singular point $s'$ of $\uD'^{+}$.
Using the local conical structures of the divisors at their singular points and
the retraction $\mc T^{*}:=\mc T\setminus\uD^{+}\to\mc M$ defined by the vector field $\xi$ considered in Section~\ref{milnor}, we can adapt the constructions of  \cite[Section~4.1]{marquage} and a variant of \cite[Lemma~4.6]{marquage} to obtain a piece-adapted continuous map $\psi_{T}:\mc T^{*}\to\mc M'\subset\mc T'^{*}$ such that
\begin{enumerate}[(a)]
\item $\psi_{T}$ is homotopic to $\uphi_{|\mc T^{*}}$ as maps from $\mc T^{*}$ into $\mc T'^{*}$ by a homotopy preserving all essential pieces associated to the singularities;
\item the restriction of $\psi_{T}$ to each connected component of a piece of $\mc M$ is a homeomorphism onto a connected component of a piece of $\mc M'$ which respect the circle fibrations considered in Lemma~\ref{lema1}.
\end{enumerate}
We define $\psi$ as the restriction of $\psi_{T}$ to $\mc M$.\\

\noindent\textit{Second step. }  For any essential piece $\mc M_{\alpha}$ of $\mc M$, the part of the proof of the main result of \cite{marquage} corresponding to Sections 4.2 to 4.4 gives us a homotopy, which preserves the boundaries, between the continuous map $\psi_{|\mc M_{\alpha}}$ and a homeomorphism $\psi_{\alpha}:\mc M_{\alpha}\to\mc M_{\alpha}'=\psi(\mc M_{\alpha})$ such that
\begin{enumerate}[(a)]
\item $\psi_{\alpha}$ extends to a homeomorphism $\Psi_{\alpha}:\mc T_{\alpha}\to\mc T_{\alpha}'$ between the corresponding pieces of $\mc T$ and $\mc T'$ containing $\mc M_{\alpha}$ and $\mc M'_{\alpha}$;
\item $\Psi_{\alpha}$ is excellent in the sense of \cite[Definition~2.5]{marquage}; in particular, the restriction of $\Psi_{\alpha}$ to $\partial \mc T_{\alpha}$ conjugates the disk fibrations considered in Lemma~\ref{lema1}.
\end{enumerate}

\vspace{1,5em}

\noindent\textit{Third step. } Using the product structure, it is straightforward to construct homotopies on the collar pieces of $\mc M$ gluing the previous homotopies defined in the essential pieces of $\mc M$. In this way we obtain a piece-adapted continuous map $\psi':\mc M\to\mc M'$ whose restriction to each essential $\mc M_{\alpha}$ coincides with the homeomorphism $\psi_{\alpha}$ but whose restriction to any collar piece is not necessarily a homeomorphism.
However, up to deforming $\psi'$ by suitable homotopies with support on the collar pieces provided by \cite[Theorem~6.1]{Waldhausen} we can assume that $\psi'$ is a piece-adapted global homeomorphism.

It remains to extend $\psi'$ to an excellent homeomorphism $\Psi$ between $\mc T$ and $\mc T'$ possessing a lifting $\wt\Psi$ to the universal coverings of $\mc T\setminus\uD$ and $\mc T'\setminus\uD'$ fulfilling properties of Lemma~\ref{marking}. On each essential piece $\mc T_{\alpha}$ we define $\Psi$ as $\Psi_{\alpha}$ constructed in second step. Since the restriction of $\Psi_{\alpha}$
to the boundary of the essential pieces conjugates the disk fibrations, we can apply the techniques given in \cite[Sections~4.4.2 and~4.4.4]{marquage} to obtain the desired extension $\Psi$. In addition, it is not difficult to modify $\Psi$ by an excellent isotopy in order to have  $\Psi_{|\Sigma}=\uphi_{|\Sigma}:\Sigma\to\Sigma'$.
Classically there exists a lifting  $\wt\Psi$ to the universal coverings $\wt{\mc T}$ and $\wt{\mc T}'$ of $\mc T\setminus\uD$ and $\mc T'\setminus\uD'$ such that $\wt\Psi_{*}=\wt\uphi_{*}:\Gamma\to\Gamma'$.

Moreover, since the restriction of $\uphi$ and $\Psi$ to each singular piece $\mc M_{\alpha}$ are related by a homotopy localized in $\mc M_{\alpha}$ it follows that
$$
\wt\uphi_{\bullet}=\wt\Psi_{\bullet}:\pi_{0}(\wt{\mc T}_{\alpha})\to\pi_{0}(\wt{\mc T}'_{\alpha}).
$$
Thanks to this last equality we can apply the procedure described in \cite[Section~8.4]{monodromy} in order to modify $(\Psi,\wt\Psi)$ by Dehn twists to obtain a new pair $(\varphi,\wt\varphi)$ which satisfy the same properties~(\ref{iii}) and~(\ref{0}) and fulfills also the equality
$$
\wt\uphi_{\bullet}=\wt\varphi_{\bullet}:\pi_{0}(\wt\Sigma)\to\pi_{0}(\wt\Sigma').
$$
Up to making an additional Dehn twist if necessary we obtain that $\wt\varphi_{|\Sigma}=\wt\uphi_{|\wt\Sigma}$, showing Property~(\ref{ii}).

Since Property~(\ref{i}) follows from our construction, the proof of the lemma is achieved.
\end{proof}
\color{black}

\end{document}